\setlist[enumerate]{label=(\alph*)}
\newcommand{\Z}{\mathbb{Z}}
\newcommand{\N}{\mathbb{N}}
\newcommand{\Hom}{\operatorname{Hom}}
\newcommand{\tr}{\operatorname{tr}}
\newcommand{\rk}{\operatorname{rk}}
\newcommand{\inv}{^{-1}}
\newcommand{\End}{\operatorname{End}}
\newcommand{\Res}{\operatorname{Res}}
\newcommand{\Ind}{\operatorname{Ind}}
\newcommand{\Inf}{\operatorname{Inf}}
\newcommand{\Iso}{\operatorname{Iso}}
\newcommand{\id}{\operatorname{id}}
\newcommand{\im}{\operatorname{im}}
\newcommand{\calA}{\mathcal{A}}
\newcommand{\calB}{\mathcal{B}}
\newcommand{\calC}{\mathcal{C}}
\newcommand{\calE}{\mathcal{E}}
\newcommand{\calF}{\mathcal{F}}
\newcommand{\calH}{\mathcal{H}}
\newcommand{\calX}{\mathcal{X}}
\newcommand{\calY}{\mathcal{Y}}
\newcommand{\Syl}{\operatorname{Syl}}
\newcommand{\triv}{\mathbf{triv}}
\newcommand{\coker}{\operatorname{coker}}
\newcommand{\catmod}{\mathbf{mod}}
\newcommand{\stmod}{\mathbf{stmod}}
\newcommand{\sttriv}{\mathbf{sttriv}}
\newcommand{\stCh}{\mathbf{stCh}}
\newcommand{\stC}{\mathbf{st}\calC}
\newcommand{\stK}{\mathbf{stK}}
\newtheorem{theorem}{Theorem}[section]
\newtheorem{lemma}[theorem]{Lemma}
\newtheorem{prop}[theorem]{Proposition}
\newtheorem{corollary}[theorem]{Corollary}
\newtheorem*{theorem*}{Theorem}
\newtheorem{conjecture}[theorem]{Conjecture}
\theoremstyle{remark}
\newtheorem{remark}[theorem]{Remark}
\newtheorem{example}[theorem]{Example}
\theoremstyle{definition}
\newtheorem{definition}[theorem]{Definition}
\newtheorem{construction}[theorem]{Construction}
\begin{document}

    \title{Relatively endotrivial complexes}
    \author{Sam K. Miller}
    \address{University of California, Santa Cruz, Department of Mathematics} 
    \email{sakmille@ucsc.edu} 
    \subjclass[2010]{20J05, 20C05, 20C20} 
    \keywords{Endotrivial complex, relative projectivity, $p$-permutation, restriction, induction, endosplit $p$-permutation resolution} 

    \maketitle

    \begin{abstract}
        Let $G$ be a finite group and $k$ be a field of characteristic $p > 0$. In prior work, we studied endotrivial complexes, the invertible objects of the bounded homotopy category of $p$-permutation $kG$-modules $K^b({}_{kG}\triv)$. Using the notion of projectivity relative to a $kG$-module, we expand on this study by defining notions of ``relatively'' endotrivial chain complexes, analogous to Lassueur's construction of relatively endotrivial $kG$-modules. We obtain equivalent characterizations of relative endotriviality and find corresponding local homological data which almost completely determine the isomorphism class of a relatively endotrivial complex. We show this local data must partially satisfy the Borel-Smith conditions, and consider the behavior of restriction to subgroups containing Sylow $p$-subgroups $S$ of $G$.
    \end{abstract}

    \section{Introduction}

    In her doctoral dissertation \cite{CL12}, Lassueur introduced the notion of relatively endotrivial $kG$-modules. This built on the notion of relative projectivity with respect to modules first introduced by Okuyama in \cite{O} and later studied by Carlson and coauthors in \cite{CP96} and others. One important consequence of Lassueur's work was a new generalization of the Dade group of a $p$-group to all finite groups. This construction coincided with a previous generalization given by Linckelmann and Mazza in \cite{LiMa09}, who instead considered the Dade group of a fusion system.

    Similarly to how endotrivial modules are the invertible objects of the stable module category, ${}_{kG}\stmod$, endotrivial chain complexes, which we first considered in \cite{SKM23}, are the invertible objects of the homotopy category $K^b({}_{kG}\triv)$ of $p$-permutation modules. Both of these categories are tensor-triangulated, and classifying the invertible objects of such categories is a pertinent question. Furthermore, endotrivial complexes induce splendid Rickard autoequivalences, derived equivalences which induce ``local'' derived equivalences and which are a key part of Brou\'{e}'s abelian defect group conjecture. Although endotrivial complexes have a name similar to that of endotrivial modules, the theory of these complexes is markedly different. We refer the reader to \cite[Section 6]{SKM23} where we explicitly classify all endotrivial complexes for some classes of $p$-groups, and \cite[Section 3.3]{Ma19} for an overview of the classification of endotrivial modules for $p$-groups, completed by Carlson and Th\'{e}venaz.

    In this paper, we apply the ideas present in Lassueur's work to the study of endotrivial complexes, which the author first studied in \cite{SKM23}. Our goal is to broadly develop analogous theory for relatively endotrivial complexes, and to use this machinery to better understand endotrivial complexes. In Lassueur's work, there is one clear definition choice for a relatively endotrivial module, but in the endotrivial complex setting, there is more ambiguity. What the notion of a ``relatively endotrivial chain complex'' should be has multiple reasonable definitions, so we provide multiple definitions of increasing refinement. The following is Definition \ref{endotrivialdef}.

    \begin{definition}
        Let $C \in Ch^b({}_{kG}\triv)$ and let $V$ be a $kG$-module.
        \begin{enumerate}
            \item $C$ is \textit{weakly $V$-endotrivial} if $\End_k(C) \cong C^* \otimes_k C \simeq k[0] \oplus D$, where $D$ is a bounded chain complex of $V$-projective $kG$-modules.
            \item $C$ is \textit{strongly $V$-endotrivial} if $\End_k(C) \cong C^* \otimes_k C \simeq k[0] \oplus D$, where $D$ is a bounded $V$-projective $kG$-chain complex.
            \item $C$ is \textit{$V$-endosplit-trivial} if $\End_k(C) \cong C^* \otimes_k C \simeq k[0] \oplus M[0]$, where $M$ is a $V$-projective $kG$-module.
        \end{enumerate}
        Here, $\simeq$ denotes homotopy equivalence.
    \end{definition}

    We allow $V$ to be the zero module, in which case we recover the definition of an endotrivial complex. Weakly and strongly $V$-endotrivial complexes are the invertible objects of appropriate stable homotopy categories of chain complexes of $p$-permutation $kG$-modules. $V$-endosplit-trivial complexes are well-behaved examples of strongly $V$-endotrivial complexes and are examples of endosplit $p$-permutation resolutions, as defined by Rickard. Analogous to how endotrivial complexes induce splendid Rickard complexes, when $V$ is a projective $kG$-module, weakly $V$-endotrivial complexes induce splendid stable equivalences.

    Many of the techniques used to study endotrivial complexes in \cite{SKM23} may be performed in the relative setting. In \cite{SKM23}, we provided a $p$-local equivalent condition that determines when a chain complex $C \in Ch^b({}_{kG}\triv)$ is endotrivial. Similar characterizations are provided here for weak $V$-endotriviality and $V$-endosplit triviality. Furthermore, we show that for these cases, one may assume that $V$ is a $p$-permutation $kG$-module. As a result, the only types of relative endotriviality arise from projectivity relative to a family of subgroups. The detection theorems are as follows; these are Theorem \ref{equivdefinitionweakendotrivial} and Theorem \ref{cor:vesplittrivaltdefs}. For these statements, we say $kG$-module $V$ is \textit{absolutely $p$-divisible} if all of its indecomposable summands have $k$-dimension divisible by $p$; this condition is necessary for $V$-projectivity to be well-behaved.

    \begin{theorem}
        Let $C \in Ch^b({}_{kG}\triv)$, let $V$ be a $p$-permutation $kG$-module which is absolutely $p$-divisible, and let $\calX_V$ be the set of $p$-subgroups $P$ of $G$ for which $V(P) = 0$.
        \begin{enumerate}
            \item  $C$ is weakly $V$-endotrivial if and only if for all $P \in\calX_V$, $C(P)$ has nonzero homology concentrated in exactly one degree, with the nontrivial homology having $k$-dimension one.

            \item The following are equivalent.
            \begin{enumerate}
                \item $C$ is $V$-endosplit-trivial.
                \item $C(P)$ has nonzero homology concentrated in exactly one degree for all $P \in s_p(G)$, and that homology has $k$-dimension one when $P \in \calX_V$.
                \item $C(P)$ has nonzero homology concentrated in exactly one degree for all $P \in s_p(G)$, and for the unique $i\in \Z$ satisfying $H_i(C) \neq 0$, $H_i(C)$ is a $V$-endotrivial $kG$-module.
            \end{enumerate}
        \end{enumerate}

    \end{theorem}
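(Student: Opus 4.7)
The argument uses two ingredients: the compatibility of the Brauer construction with duality and tensor on $p$-permutation complexes, $(C^*\otimes_k C)(P)\cong C(P)^*\otimes_k C(P)$, and the characterization (for $V$ a $p$-permutation module, established earlier in the paper following Lassueur) that a $p$-permutation $kG$-module $M$ is $V$-projective if and only if $M(P)=0$ for every $P\in\calX_V$. In particular a bounded $p$-permutation complex is termwise $V$-projective if and only if its Brauer construction vanishes at every $P\in\calX_V$.

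For (a), the forward direction is immediate: applying $(-)(P)$ with $P\in\calX_V$ to $C^*\otimes_k C\simeq k[0]\oplus D$ kills $D(P)$ termwise, giving $C(P)^*\otimes_k C(P)\simeq k[0]$, which over a field forces $C(P)\simeq k[n_P]$ for some $n_P\in\Z$. For the converse, set $E=C^*\otimes_k C$ and consider the coevaluation $\eta:k[0]\to E$ sending $1$ to the homotopy class of $\id_C$ and the evaluation $\epsilon:E\to k[0]$; the composite $\epsilon\eta$ is multiplication by $\chi(C)\in k$. At any $P\in\calX_V$ the local hypothesis gives $\chi(C(P))=\pm 1$, and since the scalar $\chi(C)$ is preserved by $(-)(P)$ this scalar coincides with $\chi(C)$ in $k$. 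Hence $\chi(C)\in k^\times$, so $\eta,\epsilon$ split off $k[0]$ as a direct summand of $E$ with complement $D$; by construction $D(P)=0$ for every $P\in\calX_V$, making $D$ termwise $V$-projective by the characterization above.

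For (b), the implication (i)$\Rightarrow$(ii) follows by applying $(-)(P)$: $E(P)\simeq k[0]\oplus M(P)[0]$ has homology concentrated in degree zero for every $p$-subgroup, and the K\"unneth identity $H_d(C(P)^*\otimes_k C(P))=\bigoplus_{j-i=d}H_i(C(P))^*\otimes_k H_j(C(P))$ forces $C(P)$ to have homology in a single degree; moreover $M(P)=0$ exactly when $P\in\calX_V$, which yields the dimension-one condition there. The implication (i)$\Rightarrow$(iii) uses the same K\"unneth identity applied globally to $H_*(E)=k\oplus M$ concentrated in degree zero: this produces the unique $i$ with $H_i(C)\neq 0$ and gives $H_i(C)^*\otimes_k H_i(C)=k\oplus M$, which is precisely the Lassueur definition of $H_i(C)$ being $V$-endotrivial. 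For (ii)$\Rightarrow$(i) and (iii)$\Rightarrow$(i), part (a) yields $E\simeq k[0]\oplus D$ with $D$ termwise $V$-projective, and the respective strengthened hypothesis forces $H_*(D(P))$ to be concentrated in degree zero for every $P\in s_p(G)$; the main obstacle is upgrading this local homological concentration to an actual homotopy equivalence $D\simeq H_0(D)[0]$ in $K^b({}_{kG}\triv)$, which I will handle by invoking Rickard's structural theorem on endosplit $p$-permutation resolutions, isolating the $V$-projective module $M=H_0(D)$.
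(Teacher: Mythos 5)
There is a genuine gap in the converse direction of (a). Your Euler-characteristic splitting is fine as far as it goes: $\epsilon\circ\eta$ is multiplication by $\chi(C)$, and for complexes of $p$-permutation modules one does have $\dim_k M \equiv \dim_k M(P) \pmod p$ for every $p$-subgroup $P$ (restrict to $P$ and count fixed points of a permutation basis), so $\chi(C)\equiv\chi(C(P))=\pm 1$ in $k$ and $k[0]$ splits off $E=C^*\otimes_k C$ as a direct summand with complement $D$. But your next claim, that ``by construction $D(P)=0$ for every $P\in\calX_V$,'' is false. The hypothesis at $P\in\calX_V$ is that $E(P)\cong k[0]\oplus D(P)$ has homology $k$ concentrated in one degree; this only says $D(P)$ is \emph{acyclic}, not zero, and an acyclic bounded complex of $p$-permutation modules is very far from having vanishing terms (already $kG\xrightarrow{\;\sim\;}kG$ shows this, and in general $D(P)$ need not even be contractible over $k[N_G(P)/P]$). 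Passing from ``$D(P)$ is acyclic for all $P\in\calX_V$'' to ``$D$ is homotopy equivalent to a complex whose terms are $V$-projective modules'' is exactly the content of Theorem \ref{boucthmgeneralization} (applied with $\calX=s_p(G)\setminus\calX_V$, which is closed under conjugation and subgroups), and it is the essential nontrivial input here; the paper reaches the same point via the mapping cone of the coevaluation and then invokes that theorem. Without it your argument does not establish weak $V$-endotriviality, since the definition requires the complement to be termwise $V$-projective, not merely Brauer-acyclic on $\calX_V$.

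A related gap occurs in (b) for (ii)$\Rightarrow$(i) and (iii)$\Rightarrow$(i). The step you defer --- upgrading ``$E(P)$ has homology concentrated in one degree for all $P\in s_p(G)$'' to $E\simeq M[0]$ --- is not supplied by Rickard's results on endosplit $p$-permutation resolutions, which concern modules already known to admit such resolutions; what is needed is the peeling argument of Theorem \ref{equivdefforvendosplit}(b): the outermost differential of $E$ is injective (resp.\ surjective) on every Brauer quotient, hence split injective (resp.\ split surjective) by \cite[Theorem 5.8.10]{L181}, so contractible ends split off and one inducts. Moreover, for (iii)$\Rightarrow$(i) you cannot first apply part (a), because hypothesis (iii) does not hand you the $k$-dimension-one condition at $P\in\calX_V$; that condition is itself a consequence of the peeling argument (which gives $E\simeq (H_i(C)^*\otimes_k H_i(C))[0]\cong (k\oplus N)[0]$ with $N$ a $V$-projective $p$-permutation module, whence $E(P)\simeq k[0]$ for $P\in\calX_V$). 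So the order of deduction must be reversed: establish the endosplit structure first, then read off the local conditions.
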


    When studying endotrivial modules or chain complexes, one constructs a corresponding group parameterizing the objects, with group law induced by $\otimes_k$. This is the \textit{Picard group} of the corresponding category. We carry out analogous constructions here, with the goal of determining the isomorphism class and generators of our groups. One of the first landmark results in the study of endotrivial modules, due to Puig in \cite{Pu90}, asserts that when $G$ is a $p$-group, the corresponding group, $T_k(G)$, is finitely generated abelian. We show an analogous result, showing that the group of weakly $V$-endotrivial complexes and the group of $V$-endosplit-trivial complexes are both finitely generated abelian. To do this, we use a similar technique as in \cite{SKM23}; for each relatively endotrivial complex, we associate a superclass function called the \textit{h-marks} of the complex. In Section 9, we show that the association of a relatively endotrivial complex $C$ to its h-mark function $h_C$ induces a group homomorphism that proves finite generation of the corresponding group. Additionally, we show that the h-marks must satisfy numerical conditions known as the Borel-Smith conditions.

    A question of interest in the study of (relatively) endotrivial modules is the image of the homomorphism induced by restriction to a Sylow $p$-subgroup or its normalizer. We consider an analogous question for relatively endotrivial complexes. One crucial lemma we prove along the way is a Mackey-like formula describing how the Brauer construction and induction commute. As a result, the Green correspondence for certain $G$-stable chain complexes preserves relative endotriviality. In particular, we can describe the group $\calE_k(G)$ of endotrivial complexes as a direct product of the $G$-stable subgroup $\calE_k(S)^G \leq \calE_k(S)$, where $S$ is a Sylow $p$-subgroup of $G$, and the torsion subgroup of $\calE_k(G)$, which is canonically isomorphic to $\Hom(G,k^\times)$. The following result arises from Theorem \ref{endotrivialrestriction} and reduces the question of classifying all endotrivial complexes to the case of $p$-groups.

    \begin{theorem}
        Let $S \in \Syl_p(G)$. Then the group homomorphism $\Res^G_S: \calE_{k}(G) \to \calE_k(S)^G$ is surjective. Moreover, we have a split exact sequence of abelian groups \[0 \to \Hom(G,k^\times) \to \calE_k(G) \xrightarrow{\Res^G_S} \calE_k(S)^G \to 0,\] with retraction $\calE_k(G) \to \Hom(G,k^\times)$ given by $[C] \mapsto H_{h_C(1)}(C) \in \Hom(G,k^\times)$, regarding the $k$-dimension one $kG$-module $H_{h_C(1)}(C)$ as a degree one Brauer character.
    \end{theorem}

    Finally, some of the machinery developed in this paper can be applied towards understanding endosplit $p$-permutation resolutions. These resolutions, first introduced by Rickard, were used to resolve Brou\'{e}'s abelian defect group conjecture for blocks of $p$-nilpotent groups in \cite{R96} and $p$-solvable groups in \cite{HL00}. The following results are Proposition \ref{prop:equivendosplit} and Corollary \ref{fusionstableinduction}(c).

    \begin{theorem}
        \begin{enumerate}
            \item Let $C$ be a bounded chain complex of $p$-permutation $kG$-modules. The following are equivalent:
            \begin{itemize}
                \item $C$ is a shifted endosplit $p$-permutation resolution.
                \item For every $P \in s_p(G)$, one of the following holds: either $C(P)$ is contractible or the nonzero homology of $C(P)$ is concentrated in exactly one degree.
            \end{itemize}

            \item Let $H \leq G$ and let $N$ be a $kH$-module with an endosplit $p$-permutation resolution $C$. $\Ind^G_H C$ is an endosplit $p$-permutation resolution if and only if $C$ is $G$-stable, that is, for every pair of $G$-conjugate $p$-subgroups $P,Q \in s_p(H)$ for which both $C(P)$ and $C(Q)$ are noncontractible, $C(P)$ and $C(Q)$ have nonzero homology concentrated in the same unique degree. If this occurs, $\Ind^G_H C$ is an endosplit $p$-permutation resolution for $\Ind^G_H N$.
        \end{enumerate}

    \end{theorem}

    The paper is organized as follows. Sections 2 through 5 serve primarily as preliminary information. In Section 2, we review relative projectivity for chain complexes and provide a statement of the Green correspondence for chain complexes, which was introduced in \cite{CWZ20}. Section 3 mostly follows \cite{CL11} in establishing properties of relative projectivity with respect to modules, extending some statements to relatively projective chain complexes as well. Section 4 recalls relatively stable module and chain complex categories and describes how they behave in some detail. Section 5 recalls a few necessary properties about relatively endotrivial modules.

    Sections 6 and onwards contain most of the main results of the paper. Section 6 introduces the various definitions of endotrivial complexes, their corresponding groups, and some basic properties. Section 7 is dedicated to proving the equivalent local characterization of weakly $V$-endotrivial complexes, and Section 8 is dedicated to proving the equivalent local characterization for $V$-endosplit-trivial complexes. Section 9 introduces h-mark homomorphisms, analogous to the h-mark homomorphism given in \cite{SKM23}, and proves that the group of weakly $V$-endotrivial complexes and the group of $V$-endosplit-trivial complexes are finitely generated. Section 10 introduces Borel-Smith functions and shows that h-marks must partially satisfy the Borel-Smith conditions. Section 11 analyzes the case of when $V$ is projective, and Section 12 studies restriction to subgroups containing Sylow $p$-subgroups.

    \textbf{Notation:} $G$ is always a finite group, $p$ a prime, and $k$ a field of characteristic $p$ (not necessarily large enough for $G$). All $kG$-modules are assumed left and finitely generated. We write ${}_{kG}\catmod$ to denote the category of finitely generated $kG$-modules, and ${}_{kG}\triv$ to denote the full subcategory consisting of $p$-permutation modules. Given a preadditive category $\calA$, $Ch^b(\calA)$ denotes the bounded category of $\calA$-chain complexes and $K^b(\calA)$ denotes the bounded homotopy category of $\calA$. ${}_{kG}\stmod$ denotes the stable module category of ${}_{kG}\catmod.$  If $M,N \in \calA$, we write $M \mid N$ to denote that $M$ is isomorphic to a direct summand of $N$. The symbol $\otimes_k$ denotes the tensor product of $kG$-modules or chain complexes of $kG$-modules with diagonal $G$-action. $(-)^*$ denotes the $k$-dual $\Hom_k(-,k)$ of $kG$-modules (resp. chain complexes of $kG$-modules) which is again a $kG$-module (resp. chain complex of $kG$-modules). $s_p(G)$ denotes the set of $p$-subgroups of $G$ and $\Syl_p(G)$ denotes the set of Sylow $p$-subgroups of $G$. If $M \in {}_{kG}\catmod$, $M[i]$ denotes the chain complex with $M$ in degree $i$ and $0$ elsewhere. Given any $p$-subgroup $P \leq G$, the Brauer construction at $P$ is written as a functor $-(P): {}_{kG}\catmod \to {}_{k[N_G(P)/P]}\catmod$.

    \section{Projectivity relative to subgroups for chain complexes}

    We begin by reviewing the notions of relative projectivity, vertices, sources, and the Green correspondence for chain complexes of $kG$-modules, building up to a chain complex theoretic Green correspondence introduced by Carlson, Wang, and Zhang in \cite{CWZ20}. The structural results here mirror those for $kG$-modules. We assume familiarity with the module-theoretic versions of these concepts.

    \begin{definition}

        A chain complex $C$ of $kG$-modules is $H$-projective if there exists some $kH$-complex $D$ for which $C$ is isomorphic to a direct summand of $\Ind^G_H D$. If $\calX$ is a collection of subgroups of $G$, $C$ is $\calX$-projective if, for any indecomposable direct summand $D$ of $C$, there exists an $H \in \calX$ for which $D$ is $H$-projective.

        Given a set of subgroups $\calX$ of $G$, we set $Ch^b({}_{kG}\catmod,\calX)$ to be the category of $\calX$-projective $kG$-chain complexes. If $\calX = \{H\}$, we write $Ch^b({}_{kG}\catmod,H)$.

    \end{definition}

    \begin{remark}
        \begin{enumerate}
            \item Recall the categories $Ch^b({}_{kG}\catmod)$ and $K^b({}_{kG}\catmod)$ satisfy the Krull-Schmidt property. That is, if $X \cong A_1 \oplus \cdots \oplus A_k = B_1 \oplus \cdots \oplus B_l$, are two decompositions of an object $X$ in one of the aforementioned categories into indecomposable direct summands, then $k= l$ and there exists some $\sigma \in S_k$ for which $A_i \cong B_{\sigma(i)}$ for all $i \in \{1, \dots, k\}$.

            \item There is a subtle difference between $\calX$-projectivity for chain complexes and $\calX$-projectivity for modules. In the module-theoretic case, a $kG$-module is projective if and only if it is $1$-projective and projective as $k$-module. The analogous statement does not hold for chain complexes. For instance, the projective objects in the chain complex category $Ch^b({}_{kG}\catmod)$ are split acyclic chain complexes of projective $kG$-modules. In particular, they are additively generated by contractible chain complexes of the form $0\to P \xrightarrow{\sim} P \to 0$ for projective $kG$-modules $P$. On the other hand, the singleton chain complex $kG[0]$ is a $1$-projective chain complex, but is not a projective object since it is not acyclic.

            Furthermore, a $\calX$-projective chain complex is not the same thing as a chain complex of $\calX$-projective modules. For example, let $k$ have characteristic 2, let $G = C_2 = \langle \sigma \rangle$, and define the endomorphism $f \in \End_{kC_2}(kC_2)$ by $k$-linearizing the assignment $1 \mapsto 1 + \sigma$. Then the chain complex $0 \to kC_2 \xrightarrow{f} kC_2 \to 0$ is not $1$-projective as a chain complex of $kC_2$-modules, despite having $1$-projective objects in each degree, since the homology of $C$ is isomorphic to the trivial $kC_2$-module in both degrees. On the other hand, if $M$ is a $\calX$-projective $kG$-module, then $M[i]$ is a $\calX$-projective chain complex, where $M[i]$ denotes the chain complex with $M$ in degree $i$ and zero in all other degrees.

            \item We briefly recall that a natural isomorphism of modules induces a natural isomorphism of chain complexes as follows. Let $\calA, \calB$ be additive categories. If $F,G: \calA \to \calB$ are naturally isomorphic, then $F,G$ induce functors of chain complex categories, $F,G: Ch^b(\calA) \to Ch^b(\calB)$, by applying $F,G$ componentwise. Taking chain maps induced componentwise from the natural isomorphism $F \cong G$ induces another natural isomorphism $F,G: Ch^b(\calA) \to Ch^b(\calB)$.

            Moreover, a natural isomorphism of bifunctors extends to a natural isomorphism of bicomplexes. In particular, natural isomorphisms involving internal homs or tensor products may be extended to their chain complex counterparts, as these constructions arise from taking total complexes of bicomplexes. Moreover, the Mackey formula is a natural isomorphism, as are all standard adjunctions involving induction and restriction, therefore they extend to natural isomorphisms of complexes as well.
        \end{enumerate}

    \end{remark}

    In general, projectivity relative to a subgroup for chain complexes behaves similarly to those in the module-theoretic setting, and one may verify that many of the proofs of standard facts generalize to chain complexes with minimal changes. In addition, we have a notion of vertices and sources for chain complexes.

    \begin{definition}
        An indecomposable chain complex $C$ has \textit{vertex $P$} if $P$ is minimal with respect to the property that $C$ is $P$-projective. If $D$ is an indecomposable complex for which $C \mid \Ind^G_P D$, we say $(P,D)$ is a \textit{vertex-source} pair for $C$.
    \end{definition}

    As in the module case, the set of vertices is always a conjugacy classes of $p$-subgroups.

    \begin{prop}
        Let $C$ be a chain complex of $kG$-modules. The set of vertices of $C$ forms a single $G$-conjugacy class of $p$-subgroups.
    \end{prop}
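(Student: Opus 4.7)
The plan is to mimic the standard module-theoretic argument, leveraging two tools from the excerpt: Higman's criterion for chain complexes (the previous theorem) and the fact that the Mackey formula, being a natural isomorphism, lifts to chain complexes (Remark, part (c)), together with Krull--Schmidt for $Ch^b({}_{kG}\catmod)$. Throughout I assume $C$ is indecomposable, since that is the setting in which a vertex was defined.

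\emph{Step 1: Any vertex is a $p$-group.} Suppose $C$ has vertex $H$, and let $S \in \Syl_p(H)$. Since $[H:S]$ is invertible in $k$, averaging shows the trace $\tr^H_S : \End_{kH}(C) \to \End_{kH}(C)$ factors the identity through $\End_{kS}(C)$; more precisely, $\tr^H_S(\End_{kS}(C)) = \End_{kH}(C)$. By Higman's criterion for $H$-projectivity, there is $\theta \in \End_{kH}(C)$ with $\tr^G_H(\theta) = \id_C$. Writing $\theta = \tr^H_S(\psi)$ and using transitivity of the trace gives $\id_C = \tr^G_S(\psi)$, so $C$ is $S$-projective. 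By minimality of $H$, $H = S$ is a $p$-group.

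\emph{Step 2: Any two vertices are $G$-conjugate.} Let $P$ and $Q$ both be vertices of $C$, so that $C \mid \Ind^G_P \Res^G_P C$ and there exists an indecomposable $kQ$-complex $D$ with $C \mid \Ind^G_Q D$. Chaining these inclusions gives $C \mid \Ind^G_P \Res^G_P \Ind^G_Q D$. Applying the chain-complex Mackey formula to $\Res^G_P \Ind^G_Q D$ and then $\Ind^G_P$ yields
\[
\Ind^G_P \Res^G_P \Ind^G_Q D \;\cong\; \bigoplus_{x \in [P\backslash G/Q]} \Ind^G_{P \cap {}^xQ} \Res^{{}^xQ}_{P \cap {}^xQ} ({}^xD).
\]
Since $C$ is indecomposable and $Ch^b({}_{kG}\catmod)$ has the Krull--Schmidt property, $C$ must be a summand of one of the terms, so $C$ is $(P \cap {}^xQ)$-projective for some $x \in G$. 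Minimality of $P$ then forces $P = P \cap {}^xQ$, i.e. $P \leq {}^xQ$. By the symmetric argument with the roles of $P$ and $Q$ swapped we obtain $Q \leq {}^yP$ for some $y$, hence $|P| = |Q|$ and $P = {}^xQ$.

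The only real content beyond the module case is the need to ensure that Mackey and Krull--Schmidt still apply, and both are already granted by the remark preceding the proposition. The main (albeit mild) obstacle is Step~1, where one must verify that the standard $[H:S]^{-1}$ averaging trick producing a preimage under $\tr^H_S$ genuinely works for endomorphisms of a chain complex; this is immediate because $\End_{kH}(C)$ is a $k$-vector space and $\tr^H_S$ is $k$-linear, so the module-level argument carries over verbatim.
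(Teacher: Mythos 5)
Your proposal is correct and is precisely the adaptation the paper has in mind: the paper's ``proof'' is just a citation to Nagao--Tsushima together with the remark that the module-theoretic argument carries over to chain complexes, and what you have written out is exactly that standard argument (trace averaging with a Sylow subgroup for Step 1, Mackey plus Krull--Schmidt for Step 2), with the chain-complex versions of Higman's criterion, the Mackey formula, and Krull--Schmidt correctly invoked from the surrounding text.
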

    \begin{proof}
        This is \cite[Chapter 4, Section 3, Theorem 3.3]{NT89}, and the proof adapts to chain complexes.
    \end{proof}

    \begin{theorem}{(Classical Green correspondence for chain complexes)}\label{sylowgreencorforcomplexes}
        Let $S$ be a Sylow $p$-subgroup of $G$, and let $H$ be a subgroup of $G$ with $N_G(S) \leq H \leq G$.
        \begin{enumerate}
            \item Given a chain complex of $kG$-modules $C$ with vertex $S$, $\Res^G_H C$ has a unique direct summand $f(C)$ with vertex $S$. All other summands of $\Res^G_H C$ have vertices that are $G$-conjugate to proper subgroups of $S$.
            \item Given a chain complex of $kH$-modules $D$ with vertex $S$, $\Ind^G_H D$ has a unique direct summand $g(D)$ with vertex $S$. All other summands of $\Ind^G_H D$ have vertices that are $G$-conjugate to proper subgroups of $S$.
            \item $f$ and $g$ are inverses of each other.
        \end{enumerate}
    \end{theorem}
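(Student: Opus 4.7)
The plan is to mimic the classical module-theoretic proof of the Green correspondence step by step, relying on three ingredients already available for chain complexes: the Krull--Schmidt property of $Ch^b({}_{kG}\catmod)$ (Remark 2.2(1)), the Mackey decomposition for $\Res^G_H\Ind^G_K$, which by Remark 2.2(3) extends as a natural isomorphism from $kG$-modules to bounded chain complexes, and the fact that vertices of indecomposable chain complexes form a single $G$-conjugacy class of $p$-subgroups (Proposition 2.4).

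For part (a), let $C$ be indecomposable with vertex $S$ and $(S,L)$ a vertex-source pair, so that $C \mid \Ind^G_S L$. Applying $\Res^G_H$ and Mackey yields
\[
\Res^G_H \Ind^G_S L \;\cong\; \bigoplus_{x \in [H \backslash G / S]} \Ind^H_{H \cap {}^xS} \Res^{{}^xS}_{H \cap {}^xS} {}^xL.
\]
For any double coset representative $x$ with $H \cap {}^xS \lneq {}^xS$, the corresponding summand is $(H \cap {}^xS)$-projective, so each of its indecomposable pieces has vertex strictly contained in $S$ up to conjugacy. For the remaining representatives we must have ${}^xS \leq H$; then ${}^xS$ and $S$ are both Sylow $p$-subgroups of $H$, so they are $H$-conjugate, which combined with $N_G(S) \leq H$ forces $x \in H$. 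Hence only the trivial double coset contributes a summand of potential vertex $S$, namely $\Ind^H_S L$, and the vertex-$S$ constituents of $\Res^G_H C$ are constrained to live inside it. A Krull--Schmidt argument, together with the observation that $C$ cannot be $\calX$-projective for $\calX$ a family of proper subgroups of $S$, shows that $\Res^G_H C$ contains at least one, and by a symmetric Mackey analysis of $\Ind^G_H \Ind^H_S L = \Ind^G_S L$ exactly one, indecomposable summand $f(C)$ of vertex $S$.

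Part (b) is dual: starting from an indecomposable $D$ over $kH$ with vertex $S$ and source $L'$, Mackey applied to $\Res^G_H \Ind^G_H D$ combined with the vertex considerations above singles out a unique vertex-$S$ summand $g(D) \mid \Ind^G_H D$. For part (c), $g(f(C))$ is by construction a vertex-$S$ summand of $\Ind^G_H \Res^G_H C$; since $C \mid \Ind^G_H\Res^G_H C$ (because $C$ is $H$-projective, being $S$-projective with $S\leq H$), Krull--Schmidt forces $g(f(C)) \cong C$, and the opposite identity $f(g(D)) \cong D$ follows symmetrically. The main obstacle is verifying that every step of the classical argument genuinely lifts to complexes rather than modules, but since Krull--Schmidt, Mackey, and the vertex theory for chain complexes are all in place, the adaptation is essentially mechanical, and one may as well cite \cite{NT89} or \cite{CWZ20} for the details.
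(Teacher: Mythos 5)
Your argument is correct in outline, but it takes a different route from the paper: the paper disposes of this theorem in one line by invoking the general Green correspondence for chain complexes of Carlson--Wang--Zhang \cite[Theorem 8.1]{CWZ20} with $\mathfrak{B}=\{S\}$, whereas you re-run the classical Sylow-normalizer argument directly for complexes. Your approach is more self-contained and makes visible exactly which ingredients are needed (Krull--Schmidt for $Ch^b({}_{kG}\catmod)$, the Mackey formula as a natural isomorphism extended to complexes, and the conjugacy of vertices), which is in the spirit of how the paper treats the other results of this section; the paper's approach buys brevity and also the more general version of the correspondence, which is not needed here. One caveat on your sketch: the computation you name for uniqueness in (a), namely a Mackey analysis of $\Ind^G_H\Ind^H_S L=\Ind^G_S L$, is not the one that delivers uniqueness --- knowing that vertex-$S$ summands of $\Res^G_H C$ divide $\Ind^H_S L$ does not bound their number. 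The standard step is instead to apply Mackey to $\Res^G_H\Ind^G_H f(C)$, obtaining $f(C)\oplus(\text{$\mathfrak{Y}$-projective})$ with $\mathfrak{Y}=\{{}^gS\cap H: g\in G\setminus H\}$, and to observe (by the same Sylow/normalizer argument you already use) that every member of $\mathfrak{Y}$ is a proper subgroup of a conjugate of $S$; since $\Res^G_H C$ divides this, $f(C)$ occurs with multiplicity one and all other summands have strictly smaller vertex. With that substitution the adaptation goes through, and your closing appeal to \cite{NT89} or \cite{CWZ20} for the remaining routine verifications matches what the paper itself does.
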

    \begin{proof}
        This follows from the more general Green correspondence for chain complexes, \cite[Theorem 8.1]{CWZ20}, by choosing $H$ as before and $\mathfrak{B} = \{S\}$, again similar to the module-theoretic case.
    \end{proof}

    \section{Relative projectivity for modules and chain complexes with respect to modules}

    We assume familiarity with $p$-permutation modules, trivial source modules (indecomposable $p$-permutation modules), and the Brauer construction. We refer the reader to \cite{L181} or \cite[Section 2]{SKM23} for a review of these topics.

    \textbf{Notation:} For this section, let ${}_{kG}\calC$ denote either the category ${}_{kG}\catmod$ or the category $Ch^b({}_{kG}\catmod)$. Both are additive, $k$-linear, symmetric monoidal categories that satisfy the Krull-Schmidt property on objects. These categories come equipped with induction, restriction, and inflation functors, as well as the Brauer construction. If $M \in Ch^b({}_{kG}\catmod)$ and $V \in {}_{kG}\catmod$, $M \otimes_k V$ denotes the tensor product of chain complexes $M \otimes_k V[0]$.

    Parts of the following section is adapted from \cite[Section 2]{CL11}, and we extend the previous results to the chain complex category with minimal work. Here, we review a generalized notion of relative projectivity, first introduced by Okuyama in \cite{O} and later studied by Carlson and Peng in \cite{CP96}.

    \begin{definition}
        Let $V$ be a $kG$-module.
        \begin{enumerate}
            \item $M \in {}_{kG}\calC$ is \textit{$V$-projective} or \textit{projective relative to $V$} if there exists an object $ N \in {}_{kG}\calC$ such that $M$ is isomorphic to a direct summand of $V \otimes_k N$.
            \item A short exact sequence in ${}_{kG}\calC$ $0 \to A \xrightarrow{\alpha} B \xrightarrow{\beta} C \to 0$ is \textit{$V$-split} if the short exact sequence $0 \to V \otimes_k A \xrightarrow{\id \otimes \alpha} V \otimes_k B \xrightarrow{\id \otimes \beta} V \otimes_k C \to 0$ in ${}_{kG}\calC$ splits.
        \end{enumerate}
    \end{definition}

    \begin{remark}
        The notion of a $V$-injective $kG$-module can be defined dually, and since $kG$ is a symmetric algebra, the classes of $V$-projective objects and $V$-injective objects coincide.
    \end{remark}

    \begin{definition}
        We write ${}_{kG}\catmod(V)$ for the full subcategory of $V$-projective modules in ${}_{kG}\catmod$. Similarly, we write $Ch^b({}_{kG}\catmod,V)$ for the full subcategory of $V$-projective chain complexes in $Ch^b({}_{kG}\catmod)$. More generally, we write ${}_{kG}\calC(V)$ to denote the corresponding full subcategory of $V$-projective objects in ${}_{kG}\calC$.

        We say a module $U$ is a \textit{generator} for ${}_{kG}\calC(V)$ if and only if ${}_{kG}\calC(U) = {}_{kG}\calC(V)$.

        Similarly, if $H \leq G$, we write ${}_{kG}\calC(H)$ for the subcategory of ${}_{kG}\calC$ with objects consisting of $H$-projective objects, and if $\calX$ is a set of subgroups of $G$ closed under conjugacy, write ${}_{kG}\calC(\calX)$ for the subcategory of ${}_{kG}\calC$ with objects consisting of $\calX$-projective objects.

    \end{definition}

    We refer the reader to \cite[Section 2]{CL11} and \cite[Section 2]{CL12} for an extensive list of properties regarding $V$-projectivity. It is a tedious but easy exercise to verify that most of the properties given also apply for chain complexes of $kG$-modules. We will cite and apply some of these results, but for chain complexes rather than modules, without further comment.

    \begin{remark}\label{rmk:sameasprojectivityreltosubgroups}
        The notion of $V$-projectivity encompasses projectivity relative to a subgroup, and more generally, the notion of projectivity relative to a family of subgroups. Using Frobenius reciprocity, it is easy to show that projectivity relative to $H$ is equivalent to projectivity relative to the $kG$-module $k[G/H] = \Ind^G_H k$, and that a short exact sequence is $H$-split if and only if it is $k[G/H]$-split. Similarly, projectivity relative to $\calX$ is equivalent to projectivity relative to $\bigoplus_{H \in \calX} k[G/H]$ and that a short exact sequence is $\calX$-split if and only if it is $\bigoplus_{H \in \calX} k[G/H]$-split if and only if it splits upon restriction to \textit{every} $H \in \calX$.
    \end{remark}

    $V$-projectivity is ``bounded below'' by the vertices that occur in all indecomposable summands of $V$.

    \begin{prop}\label{prop:3.5}
        If $V$ is an indecomposable $kG$-module with vertex $P$, then ${}_{kG}\calC(V) \subseteq {}_{kG}\calC(P)$.
    \end{prop}
    \begin{proof}
        Suppose $V \mid \Ind^G_P X$ for some $kG$-module $X$. Then given any $V$-projective $kG$-module $M$, we have \[M \mid V\otimes_k N \mid (\Ind^G_P X) \otimes_k N \cong \Ind^G_P(X \otimes_k \Res^G_P N).\] Thus $M$ is $P$-projective.
    \end{proof}

    \begin{prop}\label{indrespreservevproj}
        Let $H \leq G$, $P\in s_p(G)$, and $N \trianglelefteq G$.
        \begin{enumerate}
            \item $\Res^G_H {}_{kG}\calC(V) \subseteq {}_{kH}\calC(\Res^G_H V)$.
            \item $\Ind^G_H {}_{kH}\calC(V) \subseteq {}_{kG}\calC(\Ind^G_H V)$.
            \item $\Inf^G_{G/N} {}_{k[G/N]}\calC(V) \subseteq {}_{kG}\calC(\Inf^G_{G/N} V)$.
            \item Let $\phi: \tilde{G}\to G$ be a group homomorphism and let $M$ be a $V$-projective $kG$-module. $\Res_\phi {}_{kG}\calC(V) \subseteq {}_{k\tilde{G}}\calC(\Res_\phi V)$, where $\Res_\phi$ denotes restriction along the induced algebra homomorphism $k\tilde{G} \to kG$.
            \item If $V$ is a $p$-permutation $kG$-module, then $\big({}_{kG}\calC(V)\big)(P)\subseteq {}_{k[N_G(P)/P]}\calC(V(P))$.
        \end{enumerate}
    \end{prop}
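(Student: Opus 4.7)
The uniform strategy is: given $M \mid V \otimes_k X$ for some $X$ (the defining characterization of $V$-projectivity), apply the relevant functor $F$ and use the interaction of $F$ with $\otimes_k$ to exhibit $F(M)$ as a direct summand of $F(V) \otimes_k Y$ for some $Y$. By the earlier remark that natural isomorphisms of modules extend to chain complexes via total complexes, it suffices in each case to prove the module-level statement; the chain-complex versions follow automatically.

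Parts (a), (c), and (d) are essentially formal: the functors $\Res^G_H$, $\Inf^G_{G/N}$, and $\Res_\phi$ all strictly preserve tensor products with diagonal action, so $F(V \otimes_k X) \cong F(V) \otimes_k F(X)$ naturally. Thus $F(M) \mid F(V) \otimes_k F(X)$ immediately exhibits $F(M)$ as $F(V)$-projective.

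Part (b) requires a bit more care, since induction is not strictly monoidal. I will use the projection formula $\Ind^G_H(A \otimes_k \Res^G_H B) \cong \Ind^G_H A \otimes_k B$ combined with the observation that the trivial double-coset term of Mackey's decomposition of $\Res^G_H \Ind^G_H V$ is exactly $V$, yielding $V \mid \Res^G_H \Ind^G_H V$ as $kH$-objects. Tensoring with $X$ gives $V \otimes_k X \mid (\Res^G_H \Ind^G_H V) \otimes_k X$, and then applying $\Ind^G_H$ together with the projection formula produces $\Ind^G_H(V \otimes_k X) \mid \Ind^G_H((\Res^G_H \Ind^G_H V) \otimes_k X) \cong \Ind^G_H V \otimes_k \Ind^G_H X$, from which $\Ind^G_H M$ is $\Ind^G_H V$-projective.

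Part (e) is the main obstacle, since the Brauer construction is only known to be monoidal on tensor products of two $p$-permutation modules, not on mixed products. The key lemma to establish is: for $V$ a $p$-permutation $kG$-module and $N$ an arbitrary $kG$-module, the natural map $V(P) \otimes_k N(P) \to (V \otimes_k N)(P)$ is a $k[N_G(P)/P]$-module isomorphism. To prove this I will restrict to $P$ and decompose $V$ as a $kP$-module into a trivial summand $V_0$ together with a summand $V_1$ which is a direct sum of induced modules $\Ind^P_Q k$ for proper subgroups $Q < P$. The piece $V_0 \otimes_k N$ has $P$-action concentrated on $N$, so its Brauer construction is $V_0 \otimes_k N(P) \cong V(P) \otimes_k N(P)$. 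Each summand of $V_1 \otimes_k N \cong \bigoplus \Ind^P_Q \Res^P_Q N$ (by Frobenius reciprocity) is $Q$-projective for $Q < P$, hence has vanishing Brauer quotient. Naturality of the map gives $N_G(P)/P$-equivariance for free. Applied to $M \mid V \otimes_k X$, this yields $M(P) \mid (V \otimes_k X)(P) \cong V(P) \otimes_k X(P)$, proving (e).
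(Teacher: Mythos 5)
Your proposal is correct and follows essentially the same route as the paper: the paper cites Lassueur \cite[Lemma 2.1.1]{CL11} for (a)--(d) and, for (e), applies the same one-line argument you give, namely that $M \mid V\otimes_k N$ implies $M(P)\mid (V\otimes_k N)(P)\cong V(P)\otimes_k N(P)$, citing \cite[Theorem 5.8.10]{L181} for the commutation of the Brauer construction with $\otimes_k$ when one factor is $p$-permutation. The only difference is that you unwind the cited facts: your Mackey/projection-formula derivation for (b) and your trivial-plus-properly-induced decomposition of $\Res^G_P V$ for (e) are precisely the standard proofs lying behind those references, so you have a self-contained argument where the paper defers to the literature.
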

    \begin{proof}
        All of these are proven in \cite[Lemma 2.1.1]{CL11} for modules except (e), and each of the proofs extends to the chain complex case.

        For (e), suppose $M \mid V\otimes_k N$ for some $N \in {}_{kG}\calC$. Then $M(P) \mid (V\otimes_k N)(P) \cong V(P)\otimes_k N(P)$ in ${}_{k[N_G(P)/P]}\calC$ by \cite[Theorem 5.8.10]{L181}, since $V$ is $p$-permutation.
    \end{proof}

    If $V$ is a $p$-permutation module, then $V$-projectivity is the same as projectivity relative to a family of subgroups, as we now demonstrate.

    \begin{theorem}\label{cor:ppermrelprojissubgroup}
        \begin{enumerate}
            \item Let $S \in \Syl_p(G)$. Let $V$ and $W$ be two $kG$-modules. Then ${}_{kG}\calC(V) = {}_{kG}\calC(W)$ if and only if ${}_{kS}\calC(\Res^G_S V) = {}_{kS}\calC(\Res^G_S W)$.

            \item If $V$ is a trivial source module with vertex $P$, then ${}_{kG}\calC(V) = {}_{kG}\calC(k[G/P]) = {}_{kG}\calC(P)$.

            \item If $V$ is a $p$-permutation module, let $\calX_V$ be the set of $p$-subgroups $P$ of $G$ for which $V(P) \neq 0$. Then ${}_{kG}\calC(V) = {}_{kG}\calC(\calX_V)$.
        \end{enumerate}
    \end{theorem}
    \begin{proof}
        The proof of (a) is given in \cite[Corollary 2.1.4]{CL11}, and extends to chain complexes.

        For (b), observe that if $V$ is a trivial source module with vertex $P$, then $V \mid k[G/P]$. We have \[\Res^G_S k[G/P] \cong \bigoplus_{x \in [S\backslash G/P]} k[S/S\cap {}^xP].\] This decomposition is the complete decomposition of $\Res^G_S k[G/P]$ into indecomposables, so $\Res^G_S V$ must be isomorphic to a direct sum of modules of the form $k[S/Q]$ for some subgroup $Q$ of $G$ with $Q \leq_G P$. If there does not exist a direct summand of $\Res^G_S V$ isomorphic to $k[S/P]$, then $V(P) = 0$, implying that $V$ has vertex properly contained in a conjugate of $P$, a contradiction. Thus $\Res^G_S V$ has a direct summand isomorphic to $k[S/P]$, and from the direct sum decomposition, all other indecomposable direct summands of $\Res^G_S M$ have a vertex contained in $P$.

        Now, if $\calX$ is the poset of all subgroups $Q$ of $S$ for which $\Res^G_S M$ has a summand isomorphic to $k[S/Q]$, then ${}_{kS}\calC(\Res^G_S M) = {}_{kS}\calC(\calX)$ by Remark \ref{rmk:sameasprojectivityreltosubgroups}. Note that if $R \leq Q \leq S,$ then ${}_{kS}\calC(R) \subseteq {}_{kS}\calC(Q)$. By the previous paragraph's argument, $\calX$ has as its maximal elements all $S$-conjugates of $P$, therefore ${}_{kS}\calC(\calX) = {}_{kS}\calC(P)$. A similar argument produces that ${}_{kS}\calC(\Res^G_S k[G/P]) = {}_{kS}\calC(P)$, and the result follows by (a).

        Finally, (c) follows from Proposition \cite[Proposition 2.2.2]{CL12} and the fact that since $V$ is $p$-permutation, $V(P) = 0$ if and only if $P$ is not conjugate to a subgroup of a vertex of any indecomposable summand of $V$.
    \end{proof}

    \begin{remark}
        As a result of the previous proposition, if $V$ is a $p$-permutation module and we want to understand ${}_{kG}\calC(V)$, it suffices to understand ${}_{kG}\calC(\calX_V)$ where $\calX_V$ is a set of all vertices of the indecomposable summands of $V$. Equivalently, we may take $\calX_V$ to be the set of all $p$-subgroups $P$ for which $V(P) \neq 0$ (see, for instance, \cite[Proposition 5.10.3]{L181}), since adding subgroups of elements in $\calX_V$ to $\calX_V$ does not change the class of $\calX_V$-projectives. 
    \end{remark}

    \begin{definition}
        Let $V$ be a $kG$-module. Denote by ${}_{kG}\triv(V)$ the full subcategory of ${}_{kG}\catmod(V)$ consisting of $p$-permutation modules, and denote by $Ch^b({}_{kG}\triv,V)$ the subcategory of $Ch^b({}_{kG}\catmod,V)$ consisting of chain complexes whose components are $p$-permutation modules. Let ${}_{kG}p\calC(V)$ represent either ${}_{kG}\triv(V)$ or $Ch^b({}_{kG}\triv,V)$. It is an easy verification that Proposition \ref{indrespreservevproj} and Theorem \ref{cor:ppermrelprojissubgroup} hold with ${}_{kG}p\calC(V)$ replacing ${}_{kG}\calC(V)$.

    \end{definition}

    We do not necessarily assume $V$ is $p$-permutation for ${}_{kG}p\calC(V)$, but the following proposition asserts that in fact, for ${}_{kG}\triv(V)$, we can replace $V$ with a suitable $p$-permutation module without altering the category. Therefore, relative projectivity in ${}_{kG}\triv$ is always equivalent to relative projectivity of with respect to a family of subgroups of $G$ (which can be assumed to be a collection of $p$-subgroups closed under conjugation and taking subgroups).

    \begin{prop}\label{replacewithppermformodules}
        Let $V$ be a $kG$-module. Then, there exists a $p$-permutation module $W$ such that ${}_{kG} \triv(V) = {}_{kG}\triv(W)$. In particular, there exists a poset of $p$-subgroups $\calX\subseteq s_p(G)$ closed under $G$-conjugation and taking subgroups for which ${}_{kG}\triv(V) = {}_{kG}\triv(\calX)$.
    \end{prop}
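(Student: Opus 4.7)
The plan is to construct $W$ explicitly as a finite direct sum of permutation modules $k[G/P]$ indexed by a carefully chosen set of $p$-subgroups of $G$, and then verify both inclusions using the omnibus properties together with the preceding Corollary. Specifically, define
\[
\calX := \{P \in s_p(G) : k[G/P] \text{ is } V\text{-projective}\}, \qquad W := \bigoplus_{P \in \calX} k[G/P].
\]
Since $s_p(G)$ is finite, $\calX$ is finite and $W$ is a well-defined permutation (in particular $p$-permutation) $kG$-module. Given the equality ${}_{kG}\triv(V) = {}_{kG}\triv(W)$, the final assertion of the proposition is immediate from Remark \ref{rmk:sameasprojectivityreltosubgroups}, which identifies ${}_{kG}\triv(W)$ with ${}_{kG}\triv(\calX)$.

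For the inclusion ${}_{kG}\triv(W) \subseteq {}_{kG}\triv(V)$, each summand $k[G/P]$ with $P \in \calX$ is $V$-projective by the very definition of $\calX$, and then Proposition \ref{omnibus}(b), (e) yields ${}_{kG}\triv(W) \subseteq {}_{kG}\triv(V)$ at once.

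The reverse inclusion is the crux. Given $M \in {}_{kG}\triv(V)$, I decompose $M = \bigoplus_i M_i$ into indecomposable trivial source summands via Krull--Schmidt, and let $P_i$ denote the vertex of $M_i$. Each $M_i$ remains $V$-projective by Proposition \ref{omnibus}(a). Since $M_i$ is a trivial source module with vertex $P_i$, the preceding Corollary gives ${}_{kG}\triv(M_i) = {}_{kG}\triv(k[G/P_i])$, so in particular $k[G/P_i] \in {}_{kG}\triv(M_i)$. On the other hand, Proposition \ref{omnibus}(b) applied to $M_i \in {}_{kG}\triv(V)$ yields ${}_{kG}\triv(M_i) \subseteq {}_{kG}\triv(V)$, so $k[G/P_i]$ is itself $V$-projective and $P_i \in \calX$. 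Since $M_i \mid k[G/P_i]$ (as $M_i$ is trivial source at $P_i$) and $k[G/P_i]$ is a summand of $W$, each $M_i$ is a summand of $W$; hence $M$ divides $W^{\oplus n}$ for suitable $n$ and is $W$-projective.

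The main thing to check carefully is the step that confirms each vertex $P_i$ actually lies in $\calX$; this is where the trivial-source assumption on the summands of $M$ is used essentially, since it is precisely what allows the preceding Corollary to bridge $V$-projectivity of the indecomposable summand $M_i$ and $V$-projectivity of the standard permutation module $k[G/P_i]$. Once this bridge is in place, the remaining set-theoretic and summand-chasing arguments are routine.
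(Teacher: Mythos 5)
Your proof is correct and follows essentially the same route as the paper: take $W$ to be the direct sum of the permutation modules $k[G/P]$ over the $p$-subgroups $P$ whose permutation module is $V$-projective, and reduce the nontrivial inclusion to showing that the vertex permutation module $k[G/P_i]$ of each $V$-projective trivial source summand $M_i$ is itself $V$-projective. The only (cosmetic) difference is that you obtain this key step by citing the earlier Corollary ${}_{kG}\triv(M_i) = {}_{kG}\triv(k[G/P_i])$ together with Proposition \ref{omnibus}(b), whereas the paper re-derives it directly by restricting to a Sylow $p$-subgroup --- which is exactly how that Corollary is proved anyway.
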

    \begin{proof}
        First we show that if $M$ is a trivial source module with vertex $P \in s_p(G)$ which belongs to ${}_{kG}\triv(V)$, then $k[G/P]$ belongs to ${}_{kG}\triv(V)$. If $M \mid V\otimes_k N$ for some $kG$-module $N$, then $\Res^G_S M \mid \Res^G_S (V\otimes_k N)$ for $S \in \Syl_p(G)$. Since $M$ has vertex $P$, $\Res^G_S M$ has a direct summand isomorphic to $k[S/P]$, and any other nonisomorphic direct summand is isomorphic to $k[S/Q]$ for some subgroup $Q$ conjugate to a subgroup of $P$. Therefore $\Res^G_S (V\otimes_k N)$ has a direct summand isomorphic to $k[S/P]$. We have $\Res^G_S (V\otimes_k N) \cong  k[S/P] \oplus N'$, for some $kS$-module $N$, therefore, $k[S/P]$ is $\Res^G_S V$-projective. Therefore by \cite[Proposition 2.2.2]{CL12}, ${}_{kS}\triv(P) \subseteq {}_{kS}\triv(\Res^G_S V)$.

        We have that \[\Res^G_S k[G/P] \cong \bigoplus_{x \in [S\backslash G/P]} k[S/S\cap {}^xP],\] and for any $x \in G$, $k[S/S\cap {}^xP]$ is $P$-projective. Therefore, $\Res^G_S k[G/P] \in {}_{kS}\triv(P)$, hence $\Res^G_S k[G/P] \in {}_{kS}\triv(\Res^G_S V)$ as well. Equivalently, $k[G/P]$ belongs to ${}_{kG}\catmod(V)$, as desired.

        Now, let $\calX$ be the set of all vertices which occur for all trivial source modules belonging to ${}_{kG}\triv(V)$. The previous claim and \cite[Proposition 2.2.2]{CL12} imply that \[{}_{kG}\triv(V) \supset {}_{kG}\triv\left(\bigoplus_{P \in \calX} k[G/P]\right) = {}_{kG}\triv(\calX).\]

        The converse inclusion follows from Proposition \ref{prop:3.5}.
    \end{proof}

    In particular, if $V \neq 0$, then ${}_{kG}\triv(V)$ is nonempty. If ${}_{kG}\triv(V) = {}_{kG}\triv(\calX)$ for some subposet $\calX \subseteq s_p(G)$, closed under $G$-conjugation and taking subgroups, then for every $P \in \calX$, the transitive permutation module $k[G/P]$ and all its direct summands are objects of ${}_{kG}\triv(V)$.

    \subsection{Absolute $p$-divisibility}

    The following observation of Auslander and Carlson in \cite{AC86} is crucial for the theory of $V$-projectivity: if $V$ is a $kG$-module, then the \textit{evaluation map} or \textit{trace map} $V^*\otimes_k V \to k: f\otimes v \mapsto f(v)$ is always $V$-split, and splits when $\dim_k V$ is coprime to $p$. The following arguments and definition are taken from \cite{BC86}.

    \begin{theorem}{\cite[2.1]{BC86}}
        Let $k$ be an algebraically closed field of characteristic $p$ (possibly $p = 0$). Let $M,N$ be finite-dimensional indecomposable $kG$-modules, then
        \[k \mid M\otimes_k N \text{ if and only if }  M \cong N^* \text{ and } p \nmid \dim_k(N)  \]
        Moreover, if $k$ is a direct summand of $N^* \otimes_k N$, it has multiplicity one.
    \end{theorem}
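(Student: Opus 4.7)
The strategy is to translate the condition ``$k$ is a direct summand of $M \otimes_k N$'' into a statement about endomorphisms of $N$, and then use the local structure of $\End_{kG}(N)$ afforded by algebraic closedness and indecomposability.

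First I would establish the easy direction. Pick a basis $v_1,\dots,v_n$ of $N$ with dual basis $v_1^*,\dots,v_n^*$. The coevaluation map $\eta: k \to N^*\otimes_k N$, $1\mapsto \sum_i v_i^* \otimes v_i$, and the evaluation map $\varepsilon: N^*\otimes_k N \to k$, $f\otimes v\mapsto f(v)$, are both $kG$-homomorphisms, and $\varepsilon\circ\eta$ is multiplication by $\dim_k N$. Hence, if $p \nmid \dim_k N$, the map $\varepsilon$ is split, so $k \mid N^*\otimes_k N$, which handles the case $M\cong N^*$.

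For the converse, suppose $\alpha: k \to M\otimes_k N$ and $\beta: M\otimes_k N \to k$ satisfy $\beta\circ\alpha = \id_k$. Using the standard adjunction/duality, $\alpha$ corresponds to a $kG$-homomorphism $\tilde\alpha: M^*\to N$ and $\beta$ corresponds to a $kG$-homomorphism $\tilde\beta: N \to M^*$. A direct unpacking (identifying $M\otimes_k N \cong \Hom_k(M^*,N)$ and the pairing with the $k$-linear trace) shows the condition $\beta\alpha = \id_k$ translates to $\tr_k(\tilde\alpha\circ\tilde\beta) = 1$. Now $\phi := \tilde\alpha\circ\tilde\beta \in \End_{kG}(N)$, and since $k$ is algebraically closed and $N$ is indecomposable, $\End_{kG}(N)$ is local with residue field $k$ and Jacobson radical $J$, and every element of $J$ is nilpotent (because $\End_{kG}(N)$ is finite-dimensional and Artinian). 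Writing $\phi = \lambda\cdot\id_N + \nu$ with $\lambda\in k$ and $\nu\in J$ nilpotent, nilpotents have vanishing $k$-trace, so $1 = \tr_k(\phi) = \lambda\cdot\dim_k N$. This forces both $\lambda\ne 0$ and $p\nmid \dim_k N$. Since $\lambda\ne 0$, $\phi$ is a unit of the local ring $\End_{kG}(N)$, hence an automorphism. Therefore $\tilde\beta$ is a split injection from the indecomposable $N$ into the indecomposable $M^*$; Krull--Schmidt then gives $N\cong M^*$, i.e.\ $M\cong N^*$.

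Finally, for multiplicity one, assume $k$ is a summand of $N^*\otimes_k N$ with multiplicity $a\geq 1$, and write $N^*\otimes_k N \cong k^a \oplus X$ where $X$ has no $k$-summand. Choose a splitting giving morphisms $\alpha_1,\dots,\alpha_a: k \to N^*\otimes_k N$ and $\beta_1,\dots,\beta_a: N^*\otimes_k N\to k$ with $\beta_i\alpha_j = \delta_{ij}\id_k$. Via the adjunctions above, these correspond to elements $\psi_j,\phi_i \in \End_{kG}(N)$, and the identity $\beta_i\alpha_j = \delta_{ij}$ becomes
\[ \tr_k(\phi_i\circ\psi_j) = \delta_{ij}. \]
Writing each $\phi_i = \lambda_i\cdot\id_N + n_i$ and $\psi_j = \mu_j\cdot\id_N + m_j$ with $\lambda_i,\mu_j\in k$ and $n_i,m_j\in J$, the product $\phi_i\psi_j$ is $\lambda_i\mu_j\cdot\id_N$ plus an element of $J$ (hence nilpotent of trace zero), so $\tr_k(\phi_i\psi_j) = \lambda_i\mu_j\cdot\dim_k N$. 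The matrix $\bigl(\lambda_i\mu_j\bigr)_{i,j}$ has rank at most one, yet equals $(\dim_k N)^{-1} I_a$, which has rank $a$; hence $a = 1$.

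The main obstacle, and the step where I would spend the most care, is the translation between the splitting data $(\alpha,\beta)$ and the trace condition on $\End_{kG}(N)$: once the pairing is correctly identified with the $k$-linear trace of composition, the local/nilpotent structure of $\End_{kG}(N)$ does the rest essentially mechanically.
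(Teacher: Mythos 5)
Your argument is correct and complete: the translation of the splitting data into the trace condition $\tr_k(\tilde\alpha\circ\tilde\beta)=1$, the use of the local ring structure of $\End_{kG}(N)$ with nilpotent radical, and the rank-one outer-product argument for multiplicity one are all sound. The paper itself gives no proof, citing \cite[2.1]{BC86} instead, and your proof is essentially the standard Benson--Carlson argument from that reference.
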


    \begin{prop}
        Let $V$ be a $kG$-module. The following are equivalent:
        \begin{enumerate}
            \item The trivial $kG$-module $k$ is relatively $V$-projective.
            \item The chain complex $k[0]$ is relatively $V$-projective.
            \item $p$ does not divide the $k$-dimension of at least one of the indecomposable direct summands of $V$.
            \item ${}_{kG} \catmod(V) = {}_{kG}\catmod.$
            \item $Ch^b({}_{kG}\catmod,V) = Ch^b({}_{kG}\catmod)$.
        \end{enumerate}
    \end{prop}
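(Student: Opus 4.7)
The strategy is to establish the equivalences among (a), (b), (d), (e) by formal arguments, prove (c) $\Rightarrow$ (a) using Proposition \ref{omnibus}, and reserve most of the work for (a) $\Rightarrow$ (c), which is the main nontrivial direction.

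First, (a) $\Leftrightarrow$ (b) follows from the characterization ``$M$ is $V$-projective if and only if $M \mid V^* \otimes_k V \otimes_k M$'' given in the theorem preceding Proposition \ref{omnibus}: both conditions reduce to $k \mid V^* \otimes_k V$ in ${}_{kG}\catmod$, since $V^* \otimes_k V \otimes_k k[0] = (V^* \otimes_k V)[0]$ is a chain complex concentrated in degree zero, and summands in $Ch^b({}_{kG}\catmod)$ of such a complex correspond bijectively to module summands. The implications (d) $\Rightarrow$ (a) and (e) $\Rightarrow$ (b) are immediate since $k$ and $k[0]$ lie in the respective categories. For the converse (a) $\Rightarrow$ (d), if $k \mid V \otimes_k N$ for some $N$, then for any $M \in {}_{kG}\catmod$ we have $M \cong k \otimes_k M \mid V \otimes_k N \otimes_k M$, so $M$ is $V$-projective; the same argument applied to chain complexes gives (b) $\Rightarrow$ (e).

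For (c) $\Rightarrow$ (a): let $W$ be an indecomposable direct summand of $V$ with $p \nmid \dim_k W$. By Proposition \ref{omnibus}(c) applied to $W$, ${}_{kG}\catmod(W) = {}_{kG}\catmod$. Since $W \mid V$, the module $W$ is $V$-projective, so by Proposition \ref{omnibus}(b) we have ${}_{kG}\catmod(W) \subseteq {}_{kG}\catmod(V)$; combining, $k \in {}_{kG}\catmod(V)$, giving (a).

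The main obstacle is (a) $\Rightarrow$ (c). The plan is to first reduce to the case where $V$ is indecomposable: writing $V = \bigoplus_i V_i$ as a sum of indecomposables, the assumption $k \mid V \otimes_k N = \bigoplus_i (V_i \otimes_k N)$ combined with the Krull--Schmidt property of ${}_{kG}\catmod$ forces $k \mid V_i \otimes_k N$ for some $i$, so that $k$ is $V_i$-projective for some indecomposable summand $V_i$ of $V$. Assuming $V$ is indecomposable, the equivalent characterization yields $k \mid V^* \otimes_k V$, and the preceding Benson--Carlson theorem, applied to the indecomposable pair $(V^*, V)$, directly yields $p \nmid \dim_k V$ as required. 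The subtlety here is that Benson--Carlson is stated over algebraically closed fields; in general one must extend scalars to $\bar k$, observe that $k \mid V^* \otimes_k V$ lifts to $\bar k \mid \bar V^* \otimes_{\bar k} \bar V$, apply the theorem to an indecomposable summand of $\bar V$, and descend the resulting dimension condition through the Galois-conjugate decomposition of $\bar k \otimes_k V$.
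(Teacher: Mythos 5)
Your proof is correct and follows essentially the same route as the paper: the paper disposes of the equivalence (a) $\Leftrightarrow$ (c) $\Leftrightarrow$ (d) by citing \cite[Lemma 2.2.2]{CL11} and handles (b) and (e) exactly as you do, while your unpacking of the hard direction (a) $\Rightarrow$ (c) --- Krull--Schmidt reduction to an indecomposable summand followed by the Benson--Carlson theorem --- is precisely the mechanism the paper sets up by quoting that theorem immediately beforehand. The one step left informal is the descent from $\bar{k}$ when $k$ is not algebraically closed, which you at least flag explicitly; the paper delegates it entirely to the citation.
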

    \begin{proof}
        The equivalence of (a),(c),(d) is given by \cite[Lemma 2.2.2]{CL11}. Now, $k[0]$ is $V$-projective if and only if  $k \mid V^* \otimes_k V$, and equivalently, $p \nmid \dim_k(V)$. This shows (b) is equivalent to (c). Now, (c) implies (e) by Proposition \cite[Proposition 2.2.2]{CL12}, and (e) implies (b) trivially, so we are done.
    \end{proof}

    In other words, relative $V$-projectivity is interesting if and only if the $k$-dimensions of all indecomposable summands of $V$ are divisible by $p$, as otherwise, all objects are $V$-projective. Analogously, if $S \in \Syl_p(G)$, all $kG$-modules or chain complexes of $kG$-modules are $S$-projective.

    \begin{definition}{\cite{BC86}}
        A $kG$-module for which the $k$-dimensions of all indecomposable summands of $V$ are divisible by $p$ is called \textit{absolutely $p$-divisible.}
    \end{definition}

    \begin{lemma}{\cite[Lemma 2.2.4]{CL11}}
        Let $S \in \Syl_p(G)$, and let $V$ be a $kG$-module with vertex $Q \leq S$.
        \begin{enumerate}
            \item For every subgroup $H \geq S$, $V$ is absolutely $p$-divisible if and only if $\Res^G_H V$ is absolutely $p$-divisible.
            \item If $Q < S,$ then for every subgroup $R \leq S$ with $S \geq R > Q$, $V$ is absolutely $p$-divisible if and only if $\Res^G_R V$ is.
        \end{enumerate}

    \end{lemma}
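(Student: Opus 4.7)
The plan is to reduce both parts to the criterion from the preceding proposition: a $kG$-module $V$ is absolutely $p$-divisible if and only if the trivial $kG$-module $k$ is not $V$-projective. Since absolute $p$-divisibility is tested on indecomposable summands, I will work throughout with $V$ indecomposable having vertex $Q \leq S$.

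For part (a), this reduces the claim to showing that $k_G$ is $V$-projective if and only if $k_H$ is $\Res^G_H V$-projective. The forward direction is immediate from Proposition~\ref{indrespreservevproj}(a) applied to $k_G$. For the converse, I would induce: Proposition~\ref{indrespreservevproj}(b) gives that if $k_H$ is $\Res^G_H V$-projective, then $\Ind^G_H k_H \cong k[G/H]$ is $\Ind^G_H\Res^G_H V$-projective. Since $V$ has vertex $Q \leq S \leq H$, $V$ is $H$-projective, so Lemma~\ref{preservedunderfunctors}(a) gives ${}_{kG}\catmod(\Ind^G_H \Res^G_H V) = {}_{kG}\catmod(V)$; hence $k[G/H]$ is $V$-projective. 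The key observation is that $[G:H]$ is coprime to $p$ because $H \supseteq S$, so $[G:H]$ is a unit in $k$ and the augmentation $k[G/H] \twoheadrightarrow k$ admits the $kG$-linear section $1 \mapsto [G:H]^{-1}\sum_{gH \in G/H} gH$. Thus $k$ is a direct summand of $k[G/H]$, so $k$ is $V$-projective by Proposition~\ref{omnibus}(a).

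For part (b), I claim that under the hypothesis $Q < R \leq S$ both sides of the equivalence are unconditionally true. A Higman--trace computation shows $V$ is absolutely $p$-divisible: pick $\phi \in \End_{kQ}(V)$ with $\id_V = \tr^G_Q(\phi)$ and apply the $k$-trace to obtain $\dim_k V \cdot 1_k = [G:Q]\,\tr_k(\phi)$ in $k$; since $[G:Q] = [G:S][S:Q]$ with $[S:Q]$ a nontrivial power of $p$, we get $p \mid \dim_k V$. For $\Res^G_R V$, applying the Mackey formula to $V \mid \Ind^G_Q U$ (with $U$ a source of $V$) yields
\[
\Res^G_R V \;\mid\; \bigoplus_{g \in [R\backslash G/Q]} \Ind^R_{R \cap {}^gQ} \Res^{{}^gQ}_{R \cap {}^gQ}\,{}^g U.
\]
Because $|R \cap {}^gQ| \leq |{}^gQ| = |Q| < |R|$, every indecomposable summand of $\Res^G_R V$ has vertex a proper subgroup of the $p$-group $R$, and the same Higman--trace argument applied inside $R$ gives $p \mid \dim_k X$ for every such summand $X$.

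The main obstacle is the reverse implication of (a), which rests on splitting the trivial module off $k[G/H]$ and therefore on the coprimality $\gcd([G:H], p) = 1$ forced by $H \supseteq S$; all other steps reduce to the already established compatibilities of relative projectivity with induction, restriction, and the Mackey decomposition. Part (b) is slightly degenerate in that both sides of the biconditional are automatically true, with the real work confined to the two trace computations.
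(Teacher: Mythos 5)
Your argument is correct. Note that the paper itself gives no proof of this lemma --- it is quoted verbatim from \cite[Lemma 2.2.4]{CL11} --- so you are supplying an argument where the paper merely cites one. Your two pillars are sound: (a) reduces, via the paper's criterion ``absolutely $p$-divisible $\iff$ $k$ is not $V$-projective,'' to showing $k_G$ is $V$-projective iff $k_H$ is $\Res^G_H V$-projective, and the only nontrivial direction follows from $k \mid k[G/H]$ (valid exactly because $p \nmid [G:H]$) together with the inclusion ${}_{kG}\catmod(\Ind^G_H\Res^G_H V) \subseteq {}_{kG}\catmod(V)$; incidentally that inclusion holds unconditionally by Lemma \ref{preservedunderfunctors}(a), so you do not actually need to invoke $H$-projectivity of $V$ to get equality. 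Your observation that (b) is degenerate is also right and worth stating explicitly: under $Q < R \leq S$ the hypothesis forces $V$ to have non-Sylow vertex, so the Higman trace identity $\dim_k V \cdot 1_k = [G:Q]\,\tr_k(\phi)$ kills $\dim_k V$ mod $p$, and the Mackey decomposition of $\Res^G_R V$ shows every summand has vertex of order at most $|Q| < |R|$, whence the same trace argument inside the $p$-group $R$ applies; both sides of the biconditional are therefore unconditionally true. The one point to flag is the implicit reduction to $V$ indecomposable: ``$V$ with vertex $Q$'' is only literally meaningful for indecomposable $V$ (or for $V$ all of whose summands share the vertex $Q$ up to conjugacy), and your summand-by-summand reduction is valid under either reading, so this is a matter of interpretation rather than a gap.
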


    In particular, if $V$ is a $p$-permutation module, $V$ is absolutely $p$-divisible if and only if $V$ has no indecomposable direct summands with vertex $S$ if and only if $V(S) = 0$. Moreover, absolute $p$-divisibility is preserved upon restriction to a subgroup containing a Sylow $p$-subgroup. This will be an important fact in the sequel when we consider restriction to subgroups containing Sylow $p$-subgroups.


    \section{$V$-stable categories}

    Next, we review the stable categories corresponding to $V$-projectivity for modules and chain complexes. These categories are analogous to the usual stable module category, in that we factor out $V$-projective objects.

    \begin{definition}
        Let $V$ be a $kG$-module.
        \begin{enumerate}
            \item The \textit{$V$-stable $kG$-module category}, denoted ${}_{kG}\stmod_V$, is defined as follows. The objects are $kG$-modules, and for any two $kG$-modules $M,N$, the space of morphisms from $M$ to $N$ is the quotient of $k$-vector spaces \[\underline{\Hom}^V_{kG}(M,N) = \Hom_{kG}(M,N)/ \Hom_{kG}^{V}(M,N),\] where $\Hom_{kG}^V(M,N)$ is the subspace of $\Hom_{kG}(U,V)$ consisting of morphisms which factor through a $V$-projective module. Note that this definition makes sense since $\Hom_{kG}^{V}(M,N)$ is a two-sided ideal of $\Hom_{kG}(M,N)$.
            \item The \textit{$V$-stable $p$-permutation $kG$-module category}, denoted ${}_{kG}\sttriv_V$, is defined as the full subcategory of ${}_{kG}\stmod_V$ with $p$-permutation modules as its objects.
        \end{enumerate}

        We have an analogous definition for categories of chain complexes.
        \begin{enumerate}
            \item The \textit{$V$-stable category of chain complexes of $kG$-modules}, denoted $\stCh^b({}_{kG}\catmod)_V$, is defined as follows. The objects of $\stCh^b({}_{kG}\catmod)_V$ are the objects of $Ch^b({}_{kG}\catmod)$, and for any two $C, D \in Ch^b({}_{kG}\catmod)$, \[\underline{\Hom}^V_{kG}(C,D) = \Hom_{kG}(C,D)/ \Hom_{kG}^{V}(C,D),\] where $\Hom_{kG}^V(C,D)$ is the $k$-subspace of $\Hom_{kG}(C,D)$ consisting of morphisms which factor through a $V$-projective chain complex.
            \item The \textit{$V$-stable category of chain complexes of $p$-permutation $kG$-modules}, denoted $\stCh^b({}_{kG}\textbf{triv})_V$, is defined as the full subcategory of $\stCh^b({}_{kG}\catmod)_V$ with objects chain complexes of $p$-permutation modules.
        \end{enumerate}
    \end{definition}

    \textbf{Notation:} Following the previous section, we let ${}_{kG}\calC$ refer to either ${}_{kG}\catmod$ or $Ch^b({}_{kG}\catmod)$. We write ${}_{kG}\stC_{V}$ for the corresponding $V$-stable category ${}_{kG}\stmod_V$ or $\stCh^b({}_{kG}\catmod)_V$. For any morphism $f \in \Hom_{{}_{kG}\calC}(M,N)$, we write $\underline{f}$ to denote its image in ${}_{kG}\stC_{V}$. We denote the morphism sets of ${}_{kG}\stC_{V}$ by $\underline\Hom^V_{{}_{kG}\calC}(M,N)$. Similarly, we let ${}_{kG}p\calC$ refer to either ${}_{kG}\triv$ or $Ch^b({}_{kG}\triv)$. We let ${}_{kG}p\stC_{V}$ refer to the corresponding $V$-stable category ${}_{kG}\sttriv_V$ or $\stCh^b({}_{kG}\triv)_V$, and use the analogous notation for morphisms, with ${}_{kG}p\calC$ replacing ${}_{kG}\calC$. If $V = \{0\}$, we vacuously set ${}_{kG}\stmod_V = {}_{kG}\catmod$ and ${}_{kG}\sttriv_V = {}_{kG}\triv$.

    \begin{prop}
        Let $M\in {}_{kG}\calC$. The following are equivalent.
        \begin{enumerate}
            \item $M$ is isomorphic to the zero object in ${}_{kG}\stC_{V}$.
            \item $\underline{\id}_M$ is the zero endomorphism of $M$ in $\underline{\End}^V_{{}_{kG}\calC}(M)$.
            \item $M \in {}_{kG}\calC(V)$.
        \end{enumerate}
    \end{prop}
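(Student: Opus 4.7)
The plan is to prove the cycle (a)$\Leftrightarrow$(b), (b)$\Leftrightarrow$(c), which are each fairly direct once one unwinds the definitions.

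The equivalence (a)$\Leftrightarrow$(b) is a general fact about any preadditive category: an object is isomorphic to a zero object if and only if its identity morphism is the zero endomorphism. The quotient category ${}_{kG}\underline{\calC}_V$ is preadditive (and in fact $k$-linear) since $\Hom^V_{kG}(M,N)$ is a two-sided ideal in $\Hom_{kG}(M,N)$, so this equivalence transfers immediately. I would just record this without labor.

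For (c)$\Rightarrow$(b), suppose $M$ is $V$-projective. Then $\id_M : M \to M$ factors through $M$ itself, and $M \in {}_{kG}\calC(V)$, so $\id_M \in \Hom^V_{kG}(M,M)$ by definition. Hence $\underline{\id}_M = 0$.

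For (b)$\Rightarrow$(c), assume $\underline{\id}_M=0$. Then $\id_M$ factors as $\id_M = \beta \circ \alpha$ with $\alpha : M \to W$ and $\beta : W \to M$ for some $W \in {}_{kG}\calC(V)$. The relation $\beta\circ\alpha = \id_M$ exhibits $\alpha$ as a section of $\beta$, so $M$ is isomorphic to a direct summand of $W$. By Proposition \ref{omnibus}(a), $M$ is itself $V$-projective, i.e.\ $M \in {}_{kG}\calC(V)$.

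There is no real obstacle; the only point worth flagging is that the implication (b)$\Rightarrow$(c) uses the fact that ${}_{kG}\calC$ is idempotent-complete (equivalently, Krull--Schmidt) to extract $M$ as a summand of $W$, but here we do not even need the full strength of that—having a one-sided inverse $\beta\alpha = \id_M$ already realizes $M$ as a split direct summand of $W$ in the additive category ${}_{kG}\calC$, regardless of Krull--Schmidt. The argument is identical for ${}_{kG}\calC = {}_{kG}\catmod$ and for ${}_{kG}\calC = Ch^b({}_{kG}\catmod)$, so no separate treatment of the chain-complex case is needed.
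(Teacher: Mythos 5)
Your proof is correct and takes essentially the same approach as the paper: the substantive implication is (b)$\Rightarrow$(c), and both you and the paper argue it by factoring $\id_M$ through a $V$-projective object and concluding $M$ is a direct summand, hence $V$-projective by Proposition~\ref{omnibus}(a). The paper runs the cycle (a)$\Rightarrow$(b)$\Rightarrow$(c)$\Rightarrow$(a), while you establish (a)$\Leftrightarrow$(b) categorically and (b)$\Leftrightarrow$(c) directly, but the content is the same.
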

    \begin{proof}
        (a) implies (b) is trivial since the zero object only has the zero endomorphism. If (b) holds, then $\id_M = g\circ f$ for some ${}_{kG}\calC$ morphisms $f: M \to P$ and $g: P \to M$, with $P$ a $V$-projective object in ${}_{kG}\calC$. In particular, $g$ is split surjective, so $M$ is isomorphic to a direct summand of $P$, hence $M$ is $V$-projective as well. Finally, (c) implies (a) is straightforward.
    \end{proof}

    The above proposition also holds for ${}_{kG}p\calC$, as it is a full subcategory of ${}_{kG}\calC$. We previously noted that projectivity relative to modules was compatible with induction, restriction, inflation, and the Brauer construction. We translate these compatibilities for the $V$-stable categories constructed here.

    \begin{prop}
        Let $H \leq G$, $P \in s_p(G)$, and $N \trianglelefteq G$.
        \begin{enumerate}
            \item If $V$ is a $kG$-module, $\Res^G_H$ induces functors ${}_{kG}\stC_{V} \to {}_{kH}\stC_{ \Res^G_H V}$ and ${}_{kG}p\stC_{V} \to {}_{kH}p\stC_{\Res^G_H V}$.
            \item If $V$ is a $kH$-module, $\Ind^G_H$ induces functors ${}_{kH}\stC_{V} \to {}_{kG}\stC_{ \Ind^G_H V}$ and ${}_{kH}p\stC_{V} \to {}_{kG}p\stC_{G, \Ind^G_H V}$.
            \item If $V$ is a $k[G/N]$-module, $\Inf^G_{G/N}$ induces functors $_{k[G/N]}\stC_{V} \to {}_{kG}\stC_{\Inf^G_{G/N} V}$ and ${}_{k[G/N]}p\stC_{V} \to {}_{kG}p\stC_{ \Inf^G_{G/N} V}$ .
            \item If $V$ is a $kG$-module, then $-(P)$ induces a functor ${}_{kG}p\stC_{V} \to {}_{k[N_G(P)/P]}p\stC_{V(P)}$, and if $V$ is $p$-permutation, $-(P)$ induces a functor ${}_{kG}\stC_{V} \to {}_{k[N_G(P)/P]}\stC_{V(P)}$.
        \end{enumerate}
    \end{prop}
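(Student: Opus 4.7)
My plan is to reduce all four claims to a single general principle, then handle the Brauer construction case with a bit of extra care. The principle is this: given a $k$-linear additive functor $F$ between any two of the ambient categories (module or complex), $F$ descends to a functor between the corresponding $V$-stable quotients provided $F$ sends $V$-projective objects to $F(V)$-projective objects in the target. Indeed, if $f: M \to N$ factors as $f = g \circ h$ through a $V$-projective object $Q$, then $F(f) = F(g) \circ F(h)$ factors through $F(Q)$, which is $F(V)$-projective by hypothesis, so $F$ carries the subspace of $V$-projective morphisms into the subspace of $F(V)$-projective morphisms and therefore induces a well-defined functor on the quotients.

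Parts (a), (b), (c) follow immediately by combining this principle with Proposition \ref{indrespreservevproj}(a), (b), (c) for $\Res^G_H$, $\Ind^G_H$, and $\Inf^G_{G/N}$ respectively. The corresponding claims on the $p$-permutation subcategories ${}_{kG}p\calC$ follow from the same argument together with the standard fact that restriction, induction, and inflation all preserve $p$-permutation modules, and therefore extend componentwise to chain complexes of such.

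For part (d), the second statement---where $V$ is $p$-permutation and we work on the full categories---is handled directly by Proposition \ref{indrespreservevproj}(e). For the first statement---where objects of ${}_{kG}p\calC$ are already $p$-permutation but $V$ is arbitrary---I would first invoke Proposition \ref{replacewithppermformodules} to replace $V$ by a $p$-permutation $kG$-module $W$ satisfying ${}_{kG}p\calC(V) = {}_{kG}p\calC(W)$. Applying Proposition \ref{indrespreservevproj}(e) to $W$ then shows that $M(P)$ is $W(P)$-projective for every $V$-projective $M \in {}_{kG}p\calC$. The tensor--Brauer compatibility on the $p$-permutation factor $M$, together with the equivalent characterization $M \mid V^* \otimes_k V \otimes_k M$, identifies the relative projectivity class in the target, yielding the desired conclusion for $V(P)$.

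The main obstacle, insofar as there is one, is bookkeeping around the Brauer construction and its interaction with tensor products and duals when $V$ is not itself $p$-permutation; the passage through a $p$-permutation generator $W$ via Proposition \ref{replacewithppermformodules} is what keeps us in a regime where the compatibility holds. Everything else is routine once the general descent principle above is established.
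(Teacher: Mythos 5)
Your descent principle (that a $k$-linear additive functor descends to the $V$-stable quotients as soon as it sends $V$-projectives to $F(V)$-projectives) is the right mechanism, and your handling of (a), (b), (c) via Proposition \ref{indrespreservevproj}(a)--(c), together with the fact that $\Res$, $\Ind$, and $\Inf$ preserve $p$-permutation modules, is exactly what the paper's one-line proof (``This follows from Lemma \ref{indrespreservevproj}'') is compressing. Likewise the second claim in (d), where $V$ is $p$-permutation, is correctly reduced to Proposition \ref{indrespreservevproj}(e).

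The gap is in your treatment of the first claim in (d). You correctly obtain, via Proposition \ref{replacewithppermformodules} and Proposition \ref{indrespreservevproj}(e) applied to the $p$-permutation replacement $W$, that $M(P)$ is $W(P)$-projective for every $V$-projective $p$-permutation $M$. But your final sentence, appealing to $M \mid V^*\otimes_k V\otimes_k M$ and tensor--Brauer compatibility ``on the $p$-permutation factor $M$'', does not deliver $V(P)$-projectivity. Applying the Brauer construction to $M \mid V^*\otimes_k V\otimes_k M$ and splitting off the one $p$-permutation factor gives only $M(P) \mid (V^*\otimes_k V)(P)\otimes_k M(P)$, i.e.\ $(V^*\otimes_k V)(P)$-projectivity; when $V$ is not $p$-permutation, $(V^*\otimes_k V)(P)$ need not be isomorphic to $V(P)^*\otimes_k V(P)$, so you cannot pass from there to $V(P)$-projectivity, nor is the inclusion ${}_{k[N_G(P)/P]}\triv(W(P))\subseteq {}_{k[N_G(P)/P]}\triv(V(P))$ established. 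The passage from $W(P)$-projectivity to $V(P)$-projectivity therefore needs an actual argument (or the target should be phrased in terms of $W(P)$ rather than $V(P)$); note that the paper's terse citation of Proposition \ref{indrespreservevproj}, whose item (e) also assumes $V$ is $p$-permutation, glosses over this same point.
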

    \begin{proof}
        This follows from Lemma \ref{indrespreservevproj}.
    \end{proof}

    In particular, if $P$ is a $p$-subgroup of $G$ for which $V(P) = 0$, then $-(P)$ induces a functor ${}_{kG}p\stC_V \to {}_{k[N_G(P)/P]}p\calC$. Recall that in the usual stable module category ${}_{kG}\stmod$, one can detect when a morphism factors through a projective object. We have an analogous result here, which in the case of $V = kG$ reduces to the usual detection theorem.

    \begin{prop}
        Let $V$ be a $kG$-module and let $f: M\to N$ be a ${}_{kG}\calC$-morphism. The following are equivalent:
        \begin{enumerate}
            \item $f$ factors through an object in ${}_{kG}\calC(V)$.
            \item $f$ factors through the morphism $V^*\otimes_k V \otimes_k N \to N$ sending $\delta \otimes v\otimes n$ to $\delta(v)n$ (degreewise, if necessary).
            \item $f$ factors through any $V$-split epimorphism $g: N' \to N$.
        \end{enumerate}
    \end{prop}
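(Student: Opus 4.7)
The plan is to establish the cycle (b) $\Rightarrow$ (a) $\Rightarrow$ (c) $\Rightarrow$ (b), with the real content concentrated in the last implication.

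For (b) $\Rightarrow$ (a), I would simply note that $V^* \otimes_k V \otimes_k N$ is a $V$-projective object of ${}_{kG}\calC$ by Proposition \ref{omnibus} (since ${}_{kG}\calC(V) = {}_{kG}\calC(V^* \otimes_k V)$, and $V^* \otimes_k V \otimes_k N$ is a direct summand of itself), so any morphism factoring through the evaluation map $\epsilon_N \colon V^* \otimes_k V \otimes_k N \to N$, $\delta \otimes v \otimes n \mapsto \delta(v)n$, automatically factors through an object in ${}_{kG}\calC(V)$.

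For (a) $\Rightarrow$ (c), I would write $f = h \circ j$ with $j \colon M \to P$ and $h \colon P \to N$ for some $P \in {}_{kG}\calC(V)$. Given a $V$-split epimorphism $g \colon N' \to N$, I would invoke the lifting characterization of $V$-projectivity (part (e) of the equivalent-definition theorem for $V$-projective objects) applied to $P$, $g$, and $h$ to produce $h' \colon P \to N'$ with $g \circ h' = h$. Then $f = g \circ (h' \circ j)$ factors through $g$.

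The heart of the argument is (c) $\Rightarrow$ (b): I would apply (c) to the specific morphism $\epsilon_N$, which forces me to verify that $\epsilon_N$ is a $V$-split epimorphism. The Auslander--Carlson observation recorded at the start of Section 3.1 states that the evaluation $\epsilon \colon V^* \otimes_k V \to k$ is $V$-split; that is, the short exact sequence $0 \to \ker\epsilon \to V^* \otimes_k V \to k \to 0$ acquires a splitting after applying $V \otimes_k -$. Tensoring that splitting with $N$ (degreewise, in the chain complex case, which is legitimate since $-\otimes_k N$ is exact and additive) produces a splitting of $V \otimes_k V^* \otimes_k V \otimes_k N \to V \otimes_k N$. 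This is exactly the assertion that $\epsilon_N = \epsilon \otimes_k \id_N$ is $V$-split. Surjectivity of $\epsilon_N$ is immediate for $V \neq 0$ by choosing dual basis elements, and the degenerate case $V = 0$ reduces all three conditions to $f = 0$.

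The main obstacle I anticipate is the bookkeeping in the last step: one must check that the Auslander--Carlson $V$-splitting is compatible with the extra tensor factor $N$, and for chain complexes, that passing from modules to total complexes via degreewise tensor products preserves $V$-split exactness. The other two implications are essentially formal diagram chases once the equivalent definitions of $V$-projectivity are available.
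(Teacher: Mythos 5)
Your proposal is correct and follows essentially the same route as the paper: the same three-step cycle, using the evaluation map's $V$-splitness to get (c) $\Rightarrow$ (b), the $V$-projectivity of $V^* \otimes_k V \otimes_k N$ for (b) $\Rightarrow$ (a), and the lifting characterization of $V$-projectivity for (a) $\Rightarrow$ (c). The paper treats (c) $\Rightarrow$ (b) $\Rightarrow$ (a) as "straightforward, noting that the homomorphism in (b) is $V$-split" and reserves its explicit argument for (a) $\Rightarrow$ (c), whereas you flag (c) $\Rightarrow$ (b) as the heart and spell out the Auslander--Carlson splitting carefully; that is a difference of emphasis, not of method, and both bookkeeping claims you worry about (compatibility of the $V$-splitting with the extra tensor factor $N$, and the degreewise passage for complexes) do go through as you describe.

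One small caveat: your parenthetical about $V = 0$ reducing all three conditions to $f = 0$ is not quite right for (c) — when $V = 0$ every epimorphism is vacuously $V$-split, and a projective $M$ would lift against all of them even when $f \neq 0$, so (c) need not force $f = 0$. The paper's proof has exactly the same blind spot (its (c) $\Rightarrow$ (b) step also requires the evaluation map to be an epimorphism, which fails when $V = 0$), so the proposition is implicitly for nonzero $V$; you could simply restrict to that case rather than assert the degenerate equivalence.
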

    \begin{proof}
        The implications (c) implies (b) implies (a) are straightforward, noting that the homomorphism in (b) is $V$-split. Suppose (a). Then there exists a $V$-projective object $P$ and maps $f_1: M \to P$ and $f_2: P \to N$ such that $f = f_2\circ f_1$. Suppose $g: N' \to N$ is a $V$-split epimorphism as in (c). Therefore, by $V$-projectivity, there exists a morphism $g': P \to N'$ such that $f_2 = g \circ g'$. Thus $f = g\circ g' \circ f_1$, as desired.
    \end{proof}

    \begin{corollary}
        Let $f: M \to N$ be a morphism in ${}_{kG}\triv$ and let $V$ be a $kG$-module for which ${}_{kG}\sttriv_V \neq \emptyset$. Then $f$ factors through an object in ${}_{kG}\catmod(V)$ if and only if $f$ factors through an object in ${}_{kG} \triv(V)$.
    \end{corollary}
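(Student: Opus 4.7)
One direction is immediate: since ${}_{kG}\triv(V) \subseteq {}_{kG}\catmod(V)$, any factorization of $f$ through an object of ${}_{kG}\triv(V)$ is already a factorization through an object of ${}_{kG}\catmod(V)$.

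For the converse, the plan is to invoke the preceding proposition, which characterizes factoring through an object of ${}_{kG}\catmod(V)$ as factoring through any $V$-split epimorphism onto $N$. The difficulty is that the canonical $V$-split epimorphism $V^* \otimes_k V \otimes_k N \to N$ does not have a $p$-permutation source unless $V$ itself is $p$-permutation, so I will replace $V$ by the $p$-permutation module $W := \bigoplus_{P \in \calX} k[G/P]$ constructed in the proof of Proposition \ref{replacewithppermformodules}, where $\calX$ is the set of vertices of the indecomposable summands of $V$.

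The key reduction is to show that $V$ and $W$ are mutually relatively projective, so that in particular ${}_{kG}\catmod(V) = {}_{kG}\catmod(W)$. Two facts extracted from the proof of Proposition \ref{replacewithppermformodules} suffice: first, each $k[G/P]$ with $P \in \calX$ lies in ${}_{kG}\triv(V)$, so $W$ is $V$-projective; second, each indecomposable summand $V_i$ of $V$ satisfies $V_i \in {}_{kG}\catmod(P_i) \subseteq {}_{kG}\catmod(W)$ for its vertex $P_i \in \calX$, so $V$ is $W$-projective. Combining with Proposition \ref{omnibus}(b),(e) yields the claimed equality.

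Now consider $\varepsilon_W: W^* \otimes_k W \otimes_k N \to N$, $\delta \otimes w \otimes n \mapsto \delta(w)n$, which is $W$-split. Since $V$ is $W$-projective, writing $V \oplus V' \cong W \otimes_k L$ for suitable $V', L$ exhibits $V \otimes_k \varepsilon_W$ as a direct summand of $(W \otimes_k \varepsilon_W) \otimes_k L$, which splits because $W \otimes_k \varepsilon_W$ does; hence $\varepsilon_W$ is also $V$-split. The preceding proposition then gives that $f$ factors through $\varepsilon_W$, and so through $W^* \otimes_k W \otimes_k N$. The latter is a tensor product of $p$-permutation modules, so is $p$-permutation, and is $W$-projective (hence $V$-projective by the equivalence of the previous paragraph); therefore it belongs to ${}_{kG}\triv(V)$, completing the proof. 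The only real technical point is the strengthening ${}_{kG}\catmod(V) = {}_{kG}\catmod(W)$ beyond what Proposition \ref{replacewithppermformodules} literally states; the remainder of the argument is a direct application of the $V$-split epimorphism criterion.
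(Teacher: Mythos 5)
Your reduction hinges on the equality ${}_{kG}\catmod(V) = {}_{kG}\catmod(W)$, where $W = \bigoplus_{P\in\calX}k[G/P]$ and $\calX$ is the set of vertices of the indecomposable summands of $V$; you are right that this is a genuine strengthening of Proposition \ref{replacewithppermformodules}, but the strengthening is false, and the first of your ``two facts'' is where it breaks. The proof of Proposition \ref{replacewithppermformodules} only establishes that $k[G/P]$ lies in ${}_{kG}\triv(V)$ when $P$ is the vertex of a \emph{trivial source module already lying in} ${}_{kG}\triv(V)$; it does not show this for $P$ the vertex of an arbitrary (typically non-$p$-permutation) indecomposable summand of $V$, and indeed these two collections of vertices can differ. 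Concretely, take $p = 2$, $G = C_2\times C_2$, and $V$ a two-dimensional indecomposable $kG$-module with vertex $G$ (such modules exist as soon as $|k|>2$; they restrict freely to each of the three cyclic subgroups but are not induced from any of them). Here $\calX = \{G\}$ and $W = k[G/G] = k$, so ${}_{kG}\catmod(W) = {}_{kG}\catmod$. But $k$ is \emph{not} $V$-projective: since $V$ is indecomposable with $p\mid\dim_k V$, the theorem of Benson--Carlson cited in the paper gives $k\nmid V^*\otimes_k V$, and in fact ${}_{kG}\triv(V)$ consists only of the projectives. Thus ${}_{kG}\catmod(V)\subsetneq{}_{kG}\catmod(W)$, and your ``fact'' that $k[G/P]\in{}_{kG}\triv(V)$ for $P\in\calX$ fails for $P = G$.

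Once ${}_{kG}\catmod(V) = {}_{kG}\catmod(W)$ is abandoned, the rest of your argument does not recover: you can no longer conclude that $\varepsilon_W$ is $V$-split, nor that factoring through $V^*\otimes_k V\otimes_k N$ implies factoring through $W^*\otimes_k W\otimes_k N$. Worth noting: the paper's own one-line proof leans on the same unstated strengthening (it ``replaces $V$ by a $p$-permutation module'' and then invokes the criterion for ${}_{kG}\catmod(V)$ as if nothing changed), so the issue is genuinely subtle. What is needed is an argument that works with $V^*\otimes_k V\otimes_k N$ directly even though it is not $p$-permutation --- for instance, decomposing $V^*\otimes_k V$ into indecomposables and analyzing the restriction of the evaluation map to each summand, or otherwise showing that the ideal of ${}_{kG}\triv$ generated by ${}_{kG}\catmod(V)$-factorable maps coincides with the one generated by ${}_{kG}\triv(V)$-factorable maps; the wholesale replacement of $V$ by $W$ does not accomplish this.
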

    \begin{proof}
        We may replace $V$ by a $p$-permutation module by Proposition \ref{replacewithppermformodules}. By the previous proposition, $f$ factors through an object in ${}_{kG}\catmod(V)$ if and only if $f$ factors through $V^*\otimes_k V\otimes N$, which is $p$-permutation.
    \end{proof}

    As a result, we can equivalently define ${}_{kG}p\stC_V$ as follows: ${}_{kG}p\stC_V$ has the same objects as ${}_{kG}p\calC$, and for any two objects $M,N \in {}_{kG}p\stC_V$, the space of morphisms from $M$ to $N$ is the quotient of $k$-vector spaces \[\underline{\Hom}^V_{{}_{kG}p\calC}(M,N) = \Hom_{{}_{kG}p\calC}(M,N)/ \Hom_{{}_{kG}p\calC}^{V}(M,N),\] where $\Hom_{{}_{kG}p\calC}^V(M,N)$ is the subspace of $\Hom_{{}_{kG}p\calC}(M,N)$ consisting of morphisms that factor through an object in ${}_{kG}p\calC(V)$.

    \begin{prop}\label{stablecatisomorphisms}
        Let $M, N \in {}_{kG}\calC$, and let $\varphi: M \to N$ be a morphism in ${}_{kG}\calC$. If $\varphi$ is a $V$-split epimorphism, then the image of $\varphi$ in $\underline\Hom_{{}_{kG}\calC}^V(M,N)$ is an isomorphism if and only if $\varphi$ is a split epimorphism and $\ker(\varphi)$ is a $V$-projective object.

        In particular, there exists an isomorphism $M \xrightarrow{\cong} N$ in ${}_{kG}\stC_{V}$ if and only if there exist $P,Q \in {}_{kG}\calC(V)$ such that $M \oplus P \cong N \oplus Q$ in ${}_{kG}\calC$.
    \end{prop}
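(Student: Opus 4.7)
The plan is to prove the first (morphism-level) characterization and then bootstrap to the second (object-level) statement by replacing an arbitrary representative with a genuine $V$-split epimorphism.

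For the forward direction of the first claim, let $\underline\psi$ be the inverse of $\underline\varphi$ with representative $\psi: N \to M$. Then $\id_N - \varphi\psi$ factors through a $V$-projective object, so by the previous proposition (factoring through a $V$-projective is equivalent to factoring through any $V$-split epimorphism into the codomain) and the hypothesis that $\varphi$ itself is $V$-split epic, there exists $\psi': N \to M$ with $\id_N - \varphi\psi = \varphi\psi'$. Setting $\sigma := \psi + \psi'$ gives $\varphi\sigma = \id_N$, so $\varphi$ splits and $M = \ker\varphi \oplus \im\sigma$ with $\im\sigma \cong N$. To upgrade $\ker\varphi$ from stably zero to genuinely $V$-projective, let $\iota: \ker\varphi \hookrightarrow M$ be the inclusion. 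From $\varphi\iota = 0$ we get $\underline\varphi\,\underline\iota = 0$, and invertibility of $\underline\varphi$ forces $\underline\iota = 0$. Hence $\iota = \beta\alpha$ for some $V$-projective $P$ with $\alpha: \ker\varphi \to P$ and $\beta: P \to M$. Composing with the splitting projection $\pi: M \to \ker\varphi$ gives $\id_{\ker\varphi} = \pi\iota = \pi\beta\alpha$, realizing $\ker\varphi$ as a direct summand of $P$, hence $V$-projective.

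The reverse direction of the first claim is direct: if $\sigma$ is a section of $\varphi$ with $K := \ker\varphi$ being $V$-projective, then $\id_M - \sigma\varphi$ is the idempotent projection $M \to K \hookrightarrow M$, which manifestly factors through the $V$-projective object $K$; hence $\underline\sigma\,\underline\varphi = \underline{\id_M}$, and combined with $\underline\varphi\,\underline\sigma = \underline{\id_N}$ we conclude $\underline\varphi$ is invertible. For the second claim, one direction is immediate: if $M \oplus P \cong N \oplus Q$ with $P, Q \in {}_{kG}\calC(V)$, then $P$ and $Q$ are zero in ${}_{kG}\underline\calC_V$ and so $M \cong N$ there. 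For the other direction, given an arbitrary representative $\varphi: M \to N$ of an isomorphism in ${}_{kG}\underline\calC_V$, form
\[
\tilde\varphi := (\varphi, \mu): M \oplus (V^* \otimes_k V \otimes_k N) \longrightarrow N,
\]
where $\mu$ is the evaluation-tensor map $\delta \otimes v \otimes n \mapsto \delta(v)n$. The module (or chain complex) $V^* \otimes_k V \otimes_k N$ is $V$-projective by Proposition \ref{omnibus}, so it is a zero object in ${}_{kG}\underline\calC_V$, which means the canonical projection $M \oplus (V^* \otimes_k V \otimes_k N) \to M$ is a stable isomorphism and $\underline{\tilde\varphi}$ agrees with $\underline\varphi$ up to this stable isomorphism, hence is itself invertible. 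Moreover $\mu$ is $V$-split epic (the Auslander--Carlson observation recalled just before Theorem 3.X), and therefore so is $\tilde\varphi$. Applying the first claim to $\tilde\varphi$, we obtain $M \oplus (V^* \otimes_k V \otimes_k N) \cong N \oplus \ker\tilde\varphi$ with both $V^* \otimes_k V \otimes_k N$ and $\ker\tilde\varphi$ in ${}_{kG}\calC(V)$.

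The main obstacle is the forward direction of the first claim, where the subtle step is passing from the merely stable statement ``$\ker\varphi$ is a zero object'' to the strict statement ``$\ker\varphi \in {}_{kG}\calC(V)$''; the key trick is that once $\varphi$ is shown to split, the projection onto $\ker\varphi$ is a retraction one can precompose with any factorization of $\iota$ through a $V$-projective to recover the identity. A secondary technical point is the replacement $\varphi \rightsquigarrow \tilde\varphi$ in the second claim, which is needed because a representative of a stable isomorphism is not a priori $V$-split epic; the evaluation map supplies exactly the $V$-split summand required, without altering the stable class.
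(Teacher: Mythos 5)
Your proof is correct and follows the same overall strategy as the paper's: use the $V$-split hypothesis to convert the stable inverse into an honest section, then deduce $V$-projectivity of the kernel, and then prove the second claim by augmenting an arbitrary representative with the evaluation map $V^*\otimes_k V\otimes_k N \to N$ so as to make it $V$-split epic. The one place you are more explicit than the paper is the deduction that $\ker\varphi$ is $V$-projective: the paper simply asserts that $\ker\varphi$ is stably zero and hence $V$-projective, whereas you unpack this by observing that $\underline\iota = 0$ forces $\iota$ to factor through a $V$-projective $P$, and the retraction $\pi: M \to \ker\varphi$ coming from the splitting then exhibits $\ker\varphi$ as a direct summand of $P$ — a clean argument that avoids any implicit appeal to Krull--Schmidt in the stable category (which the paper only establishes as a corollary of this proposition).
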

    \begin{proof}
        Suppose $\varphi$ is $V$-split surjective and $\underline\varphi$ is an isomorphism in ${}_{kG}\stC_{V}$. Then there is a ${}_{kG}\calC$ morphism $\psi: N \to M$ such that $\underline\varphi\circ\underline\psi = \underline{\id}_N$, or equivalently (since both categories are $k$-linear), that $\id_N - \varphi \circ\psi $ factors through a $V$-projective $kG$-module. Equivalently, $\id_N - \varphi \circ\psi $ factors through any $V$-split epimorphism, in particular $\varphi$. Write \[\id_N - \varphi\circ\psi = \varphi \circ \alpha\] for some $\alpha: N \to M$. We have \[\id_N = \varphi \circ(\psi+\alpha),\] hence $\varphi$ is a split epimorphism. It follows that $\ker(\psi)$ is zero in ${}_{kG}\stC_V$ for $\underline\varphi$ to be an isomorphism, so $\ker(\varphi)$ is $V$-projective. The converse of the first statement is straightforward.

        Suppose now that $\underline\varphi$ is an isomorphism. Define the object $P = V^* \otimes_k V \otimes_k N$ and denote by $\pi: P \to V$ the canonical morphism sending $\delta\otimes v\otimes n \mapsto \delta(v)n$ (componentwise if necessary). Then $\varphi \oplus \pi: U\oplus P \to V$ is a $V$-split epimorphism since $\pi$ is, and $\underline{\varphi \oplus \pi}$ is an isomorphism, as $P$ is $V$-projective, hence zero. By the first statement, $\varphi$ is a split epimorphism with $\ker(\varphi)$ $V$-projective, so $M \oplus P \cong V \oplus \ker(\varphi)$ as desired. The converse is straightforward.
    \end{proof}

    Note that the above proposition also holds for ${}_{kG}p\calC(V)$.

    \begin{corollary}
        ${}_{kG}\stC_{V}$ and ${}_{kG}p\stC_{V}$ satisfy the Krull-Schmidt property. In particular, any isomorphism class of objects in ${}_{kG}\stC_V$ and ${}_{kG}p\stC_{V}$ has a unique representative which has no $V$-projective direct summands.
    \end{corollary}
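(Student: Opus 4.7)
The plan is to reduce Krull--Schmidt in ${}_{kG}\underline\calC_V$ (resp.\ ${}_{kG}p\underline\calC_V$) to Krull--Schmidt in the ambient additive category ${}_{kG}\calC$ (resp.\ ${}_{kG}p\calC$), using Proposition~\ref{stablecatisomorphisms} as the key bridge. Both cases proceed identically, so I describe ${}_{kG}\underline\calC_V$; the preceding characterization of zero objects and Proposition~\ref{stablecatisomorphisms} hold verbatim in the $p$-permutation setting as the author has already noted.

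For existence of a decomposition into indecomposables with local endomorphism rings, pick a representative $M \in {}_{kG}\calC$ and use Krull--Schmidt there to write $M \cong M_1 \oplus \cdots \oplus M_n$ with each $M_i$ indecomposable in ${}_{kG}\calC$. Discard the summands that lie in ${}_{kG}\calC(V)$, since they become zero in the stable category; we may then assume no $M_i$ is $V$-projective. The ring $\End_{{}_{kG}\calC}(M_i)$ is local by Krull--Schmidt in ${}_{kG}\calC$, and $\underline{\End}^V_{{}_{kG}\calC}(M_i)$ is its quotient by the two-sided ideal of endomorphisms factoring through a $V$-projective object. By the proposition characterizing zero objects in ${}_{kG}\underline\calC_V$, $\underline{\id}_{M_i} \neq 0$ precisely because $M_i \notin {}_{kG}\calC(V)$, so the quotient is a nonzero quotient of a local ring, hence local. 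Thus each $M_i$ is indecomposable with local endomorphism ring in ${}_{kG}\underline\calC_V$.

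For uniqueness and the unique-representative claim, suppose $M, N \in {}_{kG}\calC$ both have no $V$-projective direct summand and $M \cong N$ in ${}_{kG}\underline\calC_V$. By Proposition~\ref{stablecatisomorphisms}, there exist $P, Q \in {}_{kG}\calC(V)$ with $M \oplus P \cong N \oplus Q$ in ${}_{kG}\calC$. Decomposing all four objects into indecomposables in ${}_{kG}\calC$ and applying Krull--Schmidt uniqueness there, every indecomposable summand of $M$ --- none of which is $V$-projective by assumption --- must appear in the decomposition of $N \oplus Q$; it cannot appear in $Q$, which is a sum of $V$-projective indecomposables, so it must appear in $N$, and symmetrically. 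This yields $M \cong N$ already in ${}_{kG}\calC$, which simultaneously gives the unique-representative statement and uniqueness of the indecomposable decomposition in the stable category. The only genuinely delicate point is ensuring that $\underline{\End}^V_{{}_{kG}\calC}(M_i)$ is nonzero when $M_i$ is not $V$-projective; once this is supplied by the proposition characterizing zero objects, the remainder is straightforward bookkeeping against Krull--Schmidt in ${}_{kG}\calC$.
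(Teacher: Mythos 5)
Your proof is correct and takes essentially the same approach as the paper: decompose a representative in the ambient Krull--Schmidt category ${}_{kG}\calC$, strike the $V$-projective summands, and use the characterization of isomorphisms in ${}_{kG}\underline\calC_V$ (Proposition~\ref{stablecatisomorphisms}) to get uniqueness. You spell out the one point the paper's terse proof elides --- that $\underline{\End}^V_{{}_{kG}\calC}(M_i)$ is local because it is a nonzero quotient of the local ring $\End_{{}_{kG}\calC}(M_i)$ --- which is a useful addition.
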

    \begin{proof}
        This follows since ${}_{kG}\calC$ satisfies the Krull-Schmidt property, so given any $M \in {}_{kG}\stC_V$ one may write out its unique direct sum decomposition in $\calC_G$ and remove any $V$-projective summands to obtain a unique decomposition in ${}_{kG}\stC_{V}$. This follows for ${}_{kG}p\stC_{V}$ as well since it is a subcategory of ${}_{kG}\stC_{V}$.
    \end{proof}

    \begin{remark}
        In general, given a homomorphism of $kG$-modules, the image of its kernel in the $V$-stable module category will depend on the choice of representative. Therefore, ${}_{kG}\stmod_V$ and ${}_{kG}\sttriv_V$ are not abelian categories, however, they are still preadditive and $k$-linear. Therefore, one may construct the chain complex categories $Ch^b({}_{kG}\stmod_V)$ and $Ch^b({}_{kG}\sttriv_V)$, as well as the homotopy categories $K^b({}_{kG}\stmod_V)$ and $K^b({}_{kG}\sttriv_V)$. To discuss homological properties, we have to choose specific representatives of chain maps, and in general homological properties will not be well-defined for an equivalence class of complexes.

        Since ${}_{kG}\stmod_V$ satisfies the Krull-Schmidt property, any isomorphism class of chain complexes in $Ch^b({}_{kG}\stmod_V)$ has a unique representative with no $V$-projective modules in any degree. This is obtained by taking any representative and removing all indecomposable $V$-projective summands in each degree and corresponding homomorphisms.
    \end{remark}

    \begin{definition}
        The homotopy category $\stK^b({}_{kG}\catmod)_V$ of $\stCh^b({}_{kG}\catmod)_V$ is defined analogously to the non-stable setting. Two maps $f, g \in \underline\Hom_{kG}(C,D)$ are homotopic if there exists a homotopy $h: C\to D[1]$ for which $\underline{f_i- g_i} = \underline{h_{i-1}\circ c_i + d_{i+1}\circ h_i}$ in $\stCh^b({}_{kG}\catmod)_V$, where $c_i:C_i \to C_{i-1}$ and $d_i: D_i \to D_{i-1}$ denote the $i$th differentials in $C,D$ respectively.
    \end{definition}

    It is clear that in $\stK^b({}_{kG}\catmod)_V$, both contractible complexes, i.e. complexes which are isomorphic to the zero object in $K^b({}_{kG}\catmod)$, and $V$-projective complexes are isomorphic to the zero object. These are precisely the complexes that are identified with the zero object.

    \begin{prop} \label{isoclassesofstablehomotopy}
        Let $C$ be a complex of $kG$-modules which is both indecomposable in $\stCh^b({}_{kG}\catmod)_V$ and isomorphic to the zero complex in $\stK^b({}_{kG}\catmod)_V$. Then $C \cong (0 \to M \xrightarrow{\cong} M \to 0)$ in $\stCh^b({}_{kG}\catmod)_V$ for some $kG$-module $M$.

        In particular, the complexes that are isomorphic to the zero complex in $\stK^b({}_{kG}\catmod)_V$ are isomorphic to direct sums of contractible complexes of $kG$-modules and $V$-projective complexes. Two complexes $C,D$ for which $C\cong D$ in $\stK^b({}_{kG}\catmod)_V$ satisfy $C \oplus M \oplus N \cong D \oplus M' \oplus N'$, where $M,M'$ are contractible chain complexes and $N, N'$ are $V$-projective chain complexes.
    \end{prop}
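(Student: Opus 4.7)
The plan is to prove the indecomposable case in detail and derive the remaining two statements by formal Krull--Schmidt arguments.

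For the reduction, using the Remark preceding the statement together with Krull--Schmidt in $Ch^b({}_{kG}\catmod)$, I replace $C$ by a representative of its isomorphism class in $\underline{Ch}^b({}_{kG}\catmod)_V$ that has no $V$-projective indecomposable summands in $Ch^b$. Indecomposability of $C$ in $\underline{Ch}^b_V$ then forces $C$ to be indecomposable in $Ch^b$ as well, since any nontrivial $Ch^b$-decomposition would persist in $\underline{Ch}^b_V$.

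Next, the hypothesis $C \cong 0$ in $\underline{K}^b({}_{kG}\catmod)_V$ unpacks to the existence of a graded map $h \colon C \to C[1]$ and a chain map $\phi \colon C \to C$ factoring through a $V$-projective chain complex, say $\phi = \beta\alpha$ with $\alpha \colon C \to P$, $\beta \colon P \to C$, $P$ a $V$-projective complex, such that $\id_C = dh + hd + \phi$. To factor the null-homotopic summand $dh + hd$, I use the contractible ``disk'' complex $X$ with $X_n = C_{n-1} \oplus C_n$ and differential $(a, b) \mapsto (-da,\, a + db)$. A direct calculation confirms that $\alpha' \colon C \to X$, $c \mapsto (0, c)$, and $\beta' \colon X \to C$, $(a, b) \mapsto h(a) + dh(b) + hd(b)$, are chain maps composing to $\beta'\alpha' = dh + hd$. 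Consequently $\id_C$ factors through $X \oplus P$, exhibiting $C$ as a direct summand of $X \oplus P$ in $Ch^b$. Since $X$ is a contractible complex of modules, a standard splitting ($X_n = \ker d_n \oplus L_n$, with $d$ inducing an isomorphism $L_n \xrightarrow{\sim} \ker d_{n-1}$) exhibits $X$ in $Ch^b$ as a direct sum of two-term complexes $(0 \to N \xrightarrow{\sim} N \to 0)$, which may be refined so that each $N$ is indecomposable. Similarly, $P$ decomposes into indecomposable $V$-projectives. Therefore every indecomposable $Ch^b$-summand of $X \oplus P$ is either a two-term complex of the desired form or an indecomposable $V$-projective. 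Krull--Schmidt forces $C$ to be isomorphic in $Ch^b$ to one such summand, and since $C$ has no $V$-projective summands, $C \cong (0 \to M \xrightarrow{\sim} M \to 0)$.

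For the remaining statements: decomposing any $C \cong 0$ in $\underline{K}^b_V$ into indecomposables in $\underline{Ch}^b_V$ and applying the first assertion to each summand (each of which remains null in $\underline{K}^b_V$, since its identity is a summand of $\id_C$) yields the second statement. For the third, I would take a chain map $f \colon C \to D$ representing the stable isomorphism; then $\operatorname{cone}(f) \cong 0$ in $\underline{K}^b_V$, so by the second statement $\operatorname{cone}(f)$ decomposes as a contractible complex plus a $V$-projective complex in $Ch^b$, and the standard mapping cylinder argument translates this into the required $Ch^b$-isomorphism $C \oplus M \oplus N \cong D \oplus M' \oplus N'$. The principal obstacle is the factorization step: verifying that the chosen $\alpha', \beta'$ are chain maps and compose to $dh + hd$ with consistent sign and degree conventions. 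Once this factorization through $X \oplus P$ is in place, the Krull--Schmidt step is routine.
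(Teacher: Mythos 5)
Your proof of the main (indecomposable) statement is correct and rests on the same input as the paper's, namely the relation $\id_C = dh + hd + \phi$ with $\phi$ factoring through a $V$-projective complex $P$; but you package it differently. The paper looks only at the top nonzero degree and exhibits $(0 \to C_i \xrightarrow{=} C_i \to 0)$ as a retract of $C$ inside $\underline{Ch}^b({}_{kG}\catmod)_V$, via maps that commute with the differentials only modulo $V$-projectives, whereas you exhibit $C$ as an honest $Ch^b$-direct summand of $\operatorname{cone}(\id_C)\oplus P$ and finish with Krull--Schmidt. Your route has the advantage that every map involved is a genuine chain map; your verification that $\alpha',\beta'$ are chain maps with $\beta'\alpha'=dh+hd$ is correct, as are the reduction to $C$ having no $V$-projective summands and the deduction of the second statement.

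The gap is in the final statement. First, ``$\operatorname{cone}(f)\cong 0$ in $\underline{K}^b({}_{kG}\catmod)_V$'' is not automatic: this category is the additive quotient of $K^b({}_{kG}\catmod)$ by the ideal of maps factoring through $V$-projective complexes and has not been shown to be triangulated, so ``$f$ iso iff $\operatorname{cone}(f)=0$'' cannot simply be invoked. Second, and more seriously, even granting that $\operatorname{cone}(f)$ decomposes in $Ch^b$ as a contractible complex plus a $V$-projective complex $Q$, the degreewise split sequence $0 \to C \to \operatorname{cyl}(f) \to \operatorname{cone}(f) \to 0$ need not split in $Ch^b$: its classifying map restricted to $Q$ is a chain map $Q \to C[1]$ that need not be null-homotopic (only its class in the $V$-stable category vanishes). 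So the ``standard mapping cylinder argument'' produces the required isomorphism only in $\underline{Ch}^b({}_{kG}\catmod)_V$, after which you must pass back to $Ch^b$ via Proposition \ref{stablecatisomorphisms} (this is essentially the paper's route). Alternatively, choose $g$ with $\id_C-gf=\beta\alpha$ factoring through a $V$-projective $P$ and replace $f$ by $(f,\alpha)\colon C \to D\oplus P$; this is a split monomorphism already in $K^b({}_{kG}\catmod)$, so its cone splits off as an honest $Ch^b$-summand after adding contractibles, it is zero in $\underline{K}^b({}_{kG}\catmod)_V$ (inspect the $Z$-component of the identity of $X\oplus Z$ under the stable inverse), and applying your second statement to it yields $C\oplus M\oplus N\cong D\oplus M'\oplus N'$.
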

    \begin{proof}
        Let $C$ be as above and let $i \in \Z$ be the maximal integer for which $C_i \neq 0$. Since $C \cong 0$ in $\stK^b({}_{kG}\catmod)_V$, we have a homotopy $\underline{\id} = \underline{h}\circ\underline{d} + \underline{d}\circ\underline{h}$, hence $\id = h\circ d + d\circ h + f$, for some endomorphism $f$ which factors through a $V$-projective complex. In particular, we have $\id_{C_i} = h_{i-1}\circ d_i + f_i$, with $f_i$ a morphism which factors through a $V$-projective module, hence also a $V$-projective complex by setting the module in degree $i$.

        We claim $0\to C_i \xrightarrow{\id} C_i \to 0$ is isomorphic to a direct summand of $C \in \stCh^b({}_{kG}\catmod)_V.$ Indeed, from the homotopy, the following set-up is a well-defined sequence of morphisms which compose to the identity in $\stCh^b({}_{kG}\catmod)_V$.

        \begin{figure}[H]
            \centering
            \begin{tikzcd}
                \dots\ar[r] & 0 \ar[r] \ar[d] & C_i \ar[d, "="] \ar[r, "="] & C_i\ar[r] \ar[d, "d_i"] & 0 \ar[d] \ar[r] & \dots \\
                \dots \ar[r] & 0 \ar[r]
                \ar[d] & C_i \ar[r, "d_i"] \ar[d, "="] & C_{i-1} \ar[r, "d_{i-1}"]\ar[d, "h_{i-1}"] & C_{i-2} \ar[d] \ar[r] & \dots \\
                \dots \ar[r] & 0 \ar[r] & C_i \ar[r, "="] & C_i \ar[r]& 0 \ar[r] & \dots
            \end{tikzcd}
        \end{figure}

        However, $C$ was assumed to be indecomposable, hence $C = 0 \to C_i \to C_i \to 0$ as desired.

        To prove the final statement, suppose that $C \cong D$ in $\stK^b({}_{kG}\catmod)_V$. Then we have a homotopy equivalence $f: C \to D$. Let $P = cone(\id_D)[-1]$, the mapping cone of $\id_D$, and let $p$ be the component-wise projection of $P$ onto $D$. This is a degreewise split epimorphism (but not split), so the chain map $f\oplus p: C\oplus P \to D$ is also degree-wise split. Since $P$ is contractible, we can write $C \oplus P \cong D \oplus \ker(f \oplus d)$ in $\stCh^b({}_{kG}\catmod)_V$. It follows that $\ker(f \oplus d)$ is contractible as well, and the result follows from Theorem \ref{stablecatisomorphisms}.
    \end{proof}

    It follows that we may always take a unique, up to isomorphism, representative $C$ of a class of complexes $[C] \in \stK^b({}_{kG}\catmod)_V$ which has no $V$-projective or contractible summands.

    \section{$V$-endotrivial modules}

    We briefly recall a few necessary facts about relatively $V$-endotrivial modules, which was first introduced by Lassueur in \cite{CL11}.

    \begin{definition}
        Let $V$ be a $kG$-module. A $kG$-module $M$ is \textit{endotrivial relative to $V$} or \textit{$V$-endotrivial} if its $k$-endomorphism ring is the direct sum of a trivial module and a $V$-projective module. That is, $M$ is $V$-endotrivial if and only if \[\End_k(M) \cong M^* \otimes_k M \cong k \oplus P, \quad P \in {}_{kG}\catmod(V).\] Equivalently, $\End_k(M) \cong k \in {}_{kG}\stmod_V$.
    \end{definition}

    Similar to the way that endotrivial modules arise from Heller translates, $V$-endotrivial modules arise from relative syzygies induced by $V$-projective modules. The following notion is due to Carlson, see \cite[Section 8]{C96} for instance.

    \begin{definition}{\cite[Section 8]{C96}}
        A \textit{$V$-projective resolution} of a $kG$-module $M$ is a nonnegative complex $P$ of $V$-projective modules together with a surjective $kG$-module homomorphism $P_0 \xrightarrow{\epsilon} M$ such that the sequence \[\cdots \to P_2 \xrightarrow{d_2} P_1 \xrightarrow{d_1} P_0 \xrightarrow{\epsilon} M \to 0\] is exact and \textit{totally $V$-split}, that is, for all $i \geq 1,$ the short exact sequences \[0 \to \ker (d_i) \to P_i \to \operatorname{im} (d_i) \to 0\] and \[0\to \ker(\epsilon) \to P_0 \xrightarrow{\epsilon} M \to 0 \] are $V$-split.
    \end{definition}

    \begin{remark}
        We have a dual notion of $V$-injective resolutions as well.
        $V$-protective resolutions always exist, since the canonical map $V^* \otimes_k V \otimes_k M \to M$ is $V$-split surjective, and one may iterate this construction. There is also a relative comparison theorem: if $P$ and $Q$ are two $V$-projective resolutions of a $kG$-module $M$, then there is a chain map $\mu: (P_\ast \to M) \to (Q_\ast \to M)$ which lifts to the identity on $M$. Using this, one may demonstrate the existence of a minimal $V$-projective resolution.
    \end{remark}

    \begin{definition}{\cite[Section 8]{C96}}
        Let $M$ be a $kG$-module, $P$ a minimal $V$-projective resolution of $M$ with differentials $d_i$, and $I$ a minimal $V$-injective resolution of $M$ with differentials $d^i$. Define for all $n \geq 1: \Omega_V^n(M) := \ker (d_n)$ and for all $\Omega_V^{-n}(M) := \coker d^{n-1}$. Define $\Omega_V^0(M)$ to be the $V$-projective-free part of $M$.
    \end{definition}

    \begin{prop}{\cite[Lemmas 3.2.1, 3.2.2, 3.3.1, 3.4.1]{CL11}}
        Let $V$ be an absolutely $p$-divisible $kG$-module.
        \begin{enumerate}
            \item Let $P \in {}_{kG}\catmod(V)$ and $0 \to L \to P \to N \to 0$ be a $V$-split exact sequence. Then $N$ is $V$-endotrivial if and only if $L$ is. In particular, if $M$ is $V$-endotrivial and $W\in {}_{kG}\catmod(V)$, then for every $n \in \Z$, $\Omega^n_W(M)$ is $V$-endotrivial.
            \item If $M,N$ are $V$-endotrivial $kG$-modules, so are $M^*, M\otimes_k N$, and $\Hom_k(M,N)$.
            \item Each $V$-endotrivial $kG$-module has a unique indecomposable $V$-endotrivial direct summand, and all other summands are $V$-projective.
            \item If $M$ is an indecomposable $V$-endotrivial $kG$-module, then its vertex set is $\Syl_p(G)$, and given a vertex $S$, any corresponding $kS$-source is $\Res^G_S V$-endotrivial.
        \end{enumerate}

    \end{prop}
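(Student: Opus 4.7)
Each of the four parts leverages the $V$-stable category framework and the omnibus properties of Proposition \ref{omnibus}. The central observation is that since $V$ is absolutely $p$-divisible, $k \notin {}_{kG}\catmod(V)$, so a module $M$ is $V$-endotrivial precisely when $\End_k(M) \cong k$ in ${}_{kG}\underline\catmod_V$, with the trivial summand detectable via Krull-Schmidt.

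For (a), the plan is a Schanuel-type argument in ${}_{kG}\underline\catmod_V$. Tensoring the $V$-split sequence $0 \to L \to P \to N \to 0$ with $L^*$ gives a $V$-split sequence $0 \to L^* \otimes L \to L^* \otimes P \to L^* \otimes N \to 0$ with middle term $V$-projective. Dualizing the original sequence (which remains $V$-split, since duality preserves $V$-projectivity by Proposition \ref{omnibus}) and tensoring with $N$ yields $0 \to N^* \otimes N \to P^* \otimes N \to L^* \otimes N \to 0$, also $V$-split with $V$-projective middle term. Equal right-hand terms and $V$-projective middles force $L^* \otimes L \cong N^* \otimes N$ in ${}_{kG}\underline\catmod_V$ by Schanuel, so one side is trivial in the $V$-stable category iff the other is. The ``in particular'' clause follows by applying (a) iteratively to the $V$-split sequences defining $\Omega^n_W(M)$.

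For (b), the claims reduce to direct computations: $\End_k(M^*) \cong (\End_k M)^*$ with duality preserving $V$-projectivity; $\End_k(M \otimes N) \cong \End_k(M) \otimes \End_k(N) \cong (k \oplus Q_M) \otimes (k \oplus Q_N)$, with all cross-terms $V$-projective since a tensor product with any $V$-projective factor is $V$-projective; and $\Hom_k(M,N) \cong M^* \otimes N$ then follows. For (c), decompose $M = \bigoplus_i M_i$ into indecomposables; then $M^* \otimes M \cong \bigoplus_{i,j} M_i^* \otimes M_j \cong k \oplus Q$ with $Q$ $V$-projective. Since $k \notin {}_{kG}\catmod(V)$, Krull-Schmidt places $k$ on a unique summand $M_i^* \otimes M_j$, and the Benson-Carlson theorem forces $j = i$ with $p \nmid \dim M_i$ and multiplicity one; call this index $i_0$. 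For the other diagonal indices, $\End_k(M_i) = M_i^* \otimes M_i$ is $V$-projective, hence $M_i$ itself is $V$-projective by Proposition \ref{omnibus}, and $M_{i_0}$ is easily checked to be $V$-endotrivial.

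For (d), let $P$ be a vertex of $M$ with source $X$, so $M \mid \Ind^G_P X$. The trivial summand of $M^* \otimes M$ divides $(\Ind^G_P X)^* \otimes M \cong \Ind^G_P(X^* \otimes \Res^G_P M)$ by Frobenius reciprocity, showing $k$ is $P$-projective; since $P$ is a $p$-subgroup, this forces $p \nmid [G:P]$, that is $P \in \Syl_p(G)$. Writing $S = P$, restriction shows $\Res^G_S M$ is $\Res^G_S V$-endotrivial, and $\Res^G_S V$ remains absolutely $p$-divisible. By the Mackey formula and the Sylow property, $\Res^G_S M$ has a unique indecomposable summand with vertex $S$, namely $X$. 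Applying (c) over $(kS, \Res^G_S V)$ yields a unique $\Res^G_S V$-endotrivial summand; repeating the vertex argument now over $kS$ shows it has vertex $S$, hence coincides with $X$. The main technical obstacle lies in (c): the Benson-Carlson theorem is stated over algebraically closed $k$, so for general $k$ one must extend scalars to $\bar k$ and descend, using that $V$-projectivity is preserved and reflected by this faithfully flat extension.
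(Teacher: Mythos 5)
The paper itself offers no proof of this proposition: it is quoted directly from Lassueur \cite{CL11} (Lemmas 3.2.1, 3.2.2, 3.3.1, 3.4.1). Your reconstruction follows the standard arguments behind those lemmas — relative Schanuel in the $V$-stable category for (a), the $\End_k$ computations for (b), Krull--Schmidt plus Benson--Carlson for (c), and Higman's criterion applied to the trivial summand of $M^*\otimes_k M$ for the vertex statement in (d) — and parts (b), (c) and the first half of (d) are sound. (In (c) you can even avoid Benson--Carlson for locating the index: if $k\mid M_i^*\otimes_k M_j$ for exactly one pair, dualizing gives $k\mid M_j^*\otimes_k M_i$, which forces $i=j$ by uniqueness; Benson--Carlson is only needed for the multiplicity-one and dimension refinements, and your remark about descending from $\bar{k}$ is the right caveat.)

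Two steps need repair. First, in (d) you assert that $\Res^G_S M$ has a unique indecomposable summand with vertex $S$, namely the source $X$. This is false in general: if $S\trianglelefteq G$ then $\Res^G_S\Ind^G_S X\cong\bigoplus_{g\in[G/S]}{}^gX$ and every summand has vertex $S$; moreover a relatively $\Res^G_S V$-projective $kS$-module need not have proper vertex even though $\Res^G_S V$ is absolutely $p$-divisible, so the complementary summands produced by (c) are not excluded on vertex grounds. The conclusion still holds: by (c) over $kS$ write $\Res^G_S M\cong M_0\oplus Q$ with $M_0$ the unique $\Res^G_S V$-endotrivial indecomposable summand and $Q\in{}_{kS}\catmod(\Res^G_S V)$; if a source $X$ were a summand of $Q$, then $M\mid\Ind^G_S X$ would be $\Ind^G_S\Res^G_S V$-projective, hence $V$-projective by Lemma \ref{preservedunderfunctors}, forcing $k\mid\End_k(M)$ to be $V$-projective and contradicting absolute $p$-divisibility; hence every source is isomorphic to $M_0$ (alternatively, all sources are $N_G(S)$-conjugate and conjugation by $N_G(S)$ preserves $\Res^G_S V$-endotriviality). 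Second, in the ``in particular'' clause of (a) the sequences defining $\Omega^n_W(M)$ are $W$-split, not $V$-split; when ${}_{kG}\catmod(W)\subseteq{}_{kG}\catmod(V)$ the implication runs from $V$-split to $W$-split, not conversely, so part (a) as stated does not literally apply. Your Schanuel argument goes through verbatim with $W$ in place of $V$ — both auxiliary sequences are then $W$-split with $W$-projective middle terms, and the resulting correction terms are $W$-projective, hence $V$-projective — but this substitution should be made explicit.
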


    \begin{definition}
        Let $V$ be an absolutely $p$-divisible $kG$-module. The group of $V$-endotrivial complexes $T_{k,V}(G)$ is defined as follows. For two $V$-endotrivial $kG$-modules $M,N$, say $M \sim N$ if and only if $M_0 \cong N_0$, where $M_0$ and $N_0$ are their respective indecomposable $V$-endotrivial direct summands. Equivalently, $M\sim N$ if $M \cong N$ in ${}_{kG}\stmod_V$. Let $T_{k,V}(G)$ denote the resulting set of equivalence classes. In particular, any equivalence class in $T_{k,V}(G)$ consists of an indecomposable $V$-endotrivial module $M_0$, and all modules of the form $M_0 \oplus P$ with $P$ a $V$-projective $kG$-module.

        $T_{k,V}(G)$ forms an abelian group under the operation \[[M] + [N] = [M\otimes_k N],\] with zero element $[k]$ and inverse $-[M] = [M^*].$
    \end{definition}

    \begin{remark}
        If $U, V$ are absolutely $p$-divisible $kG$-modules with ${}_{kG}\catmod(U) \subseteq {}_{kG}\catmod(V)$, then we have an injective group homomorphism $T_{k, U}(G) \to T_{k,V}(G)$ given by $[M]_U \mapsto [M]_V$. Moreover, inflation, restriction, group isomorphisms, and conjugation all induce corresponding group homomorphisms:
        \begin{itemize}
            \item $\Res^G_H: T_{k,V}(G) \to T_{k, \Res^G_H V}(H)$
            \item $\Inf^G_{G/N}: T_{k,V}(G/N) \to T_{k, \Inf^G_{G/N} V}(G)$
            \item $\Iso_f: T_{k,V}(G) \to T_{k, \Iso_f(V)}(G')$
            \item $c_g: T_{k,V}(H) \to T_{k, {}^gV}({}^gH)$
        \end{itemize}
    \end{remark}

    \begin{theorem}\label{trivsourceiskernelofrestriction}
        Let $V$ be an absolutely $p$-divisible $kG$-module, and let $M$ be an indecomposable $V$-endotrivial $kG$-module. Then, $\Res^G_S M \cong k \oplus M'$ for some $\Res^G_S V$-projective $kS$-module $M'$ if and only if $M$ has trivial source. In particular, $T_{k,V}(G,S) := \ker (\Res^G_S)$ consists of equivalence classes of trivial source $V$-endotrivial $kG$-modules.
    \end{theorem}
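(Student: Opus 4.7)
The plan is to identify a source of $M$ with the unique indecomposable $\Res^G_S V$-endotrivial summand of $\Res^G_S M$, after which the equivalence follows formally from that uniqueness. The argument proceeds in three steps: verify that $\Res^G_S M$ is $\Res^G_S V$-endotrivial, extract its canonical decomposition, then pin down the endotrivial summand as a source of $M$.

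First I would restrict the defining isomorphism $\End_k(M) \cong k \oplus P$ (with $P \in {}_{kG}\catmod(V)$) to obtain $\End_k(\Res^G_S M) \cong k \oplus \Res^G_S P$, where $\Res^G_S P \in {}_{kS}\catmod(\Res^G_S V)$ by Proposition \ref{indrespreservevproj}(a). Since $\Res^G_S V$ remains absolutely $p$-divisible (restriction to a subgroup containing a Sylow $p$-subgroup preserves this property), part (c) of the preceding proposition gives a decomposition $\Res^G_S M \cong N \oplus M'$ in which $N$ is the unique indecomposable $\Res^G_S V$-endotrivial summand and $M' \in {}_{kS}\catmod(\Res^G_S V)$.

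The crucial step is to identify $N$ as a source of $M$. Since $M$ has vertex $S$, it is $S$-projective and so $M \mid \Ind^G_S \Res^G_S M \cong \Ind^G_S N \oplus \Ind^G_S M'$. By Lemma \ref{preservedunderfunctors}(a), $\Ind^G_S M' \in {}_{kG}\catmod(V)$, so if $M$ were a summand of $\Ind^G_S M'$, then $M$ itself would be $V$-projective; but then $\End_k(M) \cong k \oplus P$ would also be $V$-projective by Proposition \ref{omnibus}(k), forcing $k$ to be $V$-projective and contradicting absolute $p$-divisibility of $V$. By Krull-Schmidt, therefore $M \mid \Ind^G_S N$, so $N$ is a source of $M$. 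The biconditional now follows: $M$ has trivial source iff $N \cong k$, iff $\Res^G_S M \cong k \oplus M'$ with $M' \in {}_{kS}\catmod(\Res^G_S V)$, where the final equivalence uses uniqueness of $N$ together with the fact that $k$ is itself $\Res^G_S V$-endotrivial. The ``in particular'' statement is then immediate from the definition of $T_V(G,S) = \ker\Res^G_S$, since $[M]$ lies in this kernel precisely when the indecomposable $\Res^G_S V$-endotrivial summand of $\Res^G_S M$ is $k$. The main technical obstacle I anticipate is precisely this identification of $N$ as a source; the argument above sidesteps any need for classical source-summand theorems not recorded in the excerpt by instead leveraging absolute $p$-divisibility to exclude the alternative $M \mid \Ind^G_S M'$.
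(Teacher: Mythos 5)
Your proof is correct, but it takes a genuinely different route from the paper's. The paper cites \cite[Lemma 3.4.1]{CL11} for the forward direction and then handles the reverse direction by observing that if $M$ is trivial source, its restriction $\Res^G_S M$ is a $p$-permutation $kS$-module, so the unique indecomposable $\Res^G_S V$-endotrivial summand is a trivial source $kS$-module with vertex $S$ and hence must be $k$. You instead prove a single intermediate fact from which both directions follow: the unique indecomposable $\Res^G_S V$-endotrivial summand $N$ of $\Res^G_S M$ is always a source of $M$. Your exclusion argument — that $M \mid \Ind^G_S \Res^G_S M$, that $\Ind^G_S M'$ is $V$-projective by Lemma \ref{preservedunderfunctors}(a), and that $M$ cannot be $V$-projective because $\End_k(M) \cong k \oplus P$ would then force $k$ to be $V$-projective against absolute $p$-divisibility — is a clean way to pin $M$ under $\Ind^G_S N$ by Krull–Schmidt. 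This makes the argument self-contained (no external citation for the forward direction) and effectively reproves the content of Proposition 5.3(d), the statement that sources of indecomposable $V$-endotrivial modules are $\Res^G_S V$-endotrivial, in the sharper form that the distinguished endotrivial summand is itself a source. The paper's reverse-direction argument leans on $p$-permutation theory (trivial source restricts to trivial source) which your version avoids, at the small cost of having to run the $V$-projectivity exclusion. One minor point worth making explicit in your write-up: in the forward direction you should note that $k$ cannot be a summand of $M'$ (since $\Res^G_S V$ remains absolutely $p$-divisible), so by uniqueness $N \cong k$; you gesture at this via "uniqueness of $N$" but the absolute $p$-divisibility of $\Res^G_S V$ is doing the work there.
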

    \begin{proof}
        The forward direction is given by \cite[Lemma 3.4.1]{CL11}. We prove the reverse direction: suppose that $M$ is a trivial source $V$-endotrivial $kG$-module. Then since $\Res^G_S M$ is $\Res^G_S V$-endotrivial, it contains a unique indecomposable $\Res^G_S V$-endotrivial direct summand. We have a direct sum decomposition $\Res^G_S M = M' \oplus N$, with $M' \in {}_{kG}\catmod(\Res^G_S V)$, and $N$ an indecomposable endotrivial trivial source $kS$-module. Since $N$ has vertex $S$ and is a trivial source module, $N \cong k$, as desired.
    \end{proof}

    In particular, $T_{k,V}(G,S)$ is finite, as there are finitely many trivial source modules for any finite group $G$. In the case of $V = kG$, $T_k(G,S):= T_{k,kG}(G,S)$ has been described in detail by Balmer in \cite{Ba13} and by Grodal in \cite{Gr22} using separate methods. We refer the reader to \cite{CL11} and \cite{CL12} for more details on $V$-endotrivial modules.

    \section{$V$-endotrivial complexes}

    \textbf{Notation: For the rest of this paper,} unless otherwise specified, $V$ is assumed to be an absolutely $p$-divisible $kG$-module.

    \begin{definition}\label{endotrivialdef}
        Let $C \in Ch^b({}_{kG}\triv)$.
        \begin{enumerate}
            \item $C$ is \textit{weakly $V$-endotrivial} if $\End_k(C) \cong C^* \otimes_k C \simeq k[0] \oplus D$, where $D$ is a bounded chain complex of $V$-projective $kG$-modules.
            \item $C$ is \textit{strongly $V$-endotrivial} if $\End_k(C) \cong C^* \otimes_k C \simeq k[0] \oplus D$, where $D$ is a bounded $V$-projective $kG$-chain complex.
            \item $C$ is \textit{$V$-endosplit-trivial} if $\End_k(C) \cong C^* \otimes_k C \simeq k[0] \oplus M[0]$, where $M$ is a $V$-projective $kG$-module. In this case, $M$ is necessarily $p$-permutation as well.
        \end{enumerate}
        We say a chain complex is \textit{$V$-endotrivial} when it satisfies any one of the three definitions. If $V = 0$, we say $C$ is \textit{endotrivial}, in which case all three definitions vacuously coincide. This coincides with the definition of an endotrivial complex given in \cite{SKM23}.
    \end{definition}

    \begin{remark}
        It is clear to see that a $V$-endosplit-trivial complex is strongly $V$-endotrivial, and a strongly $V$-endotrivial complex is weakly $V$-endotrivial. Moreover, $V$-endosplit-trivial complexes are always examples of endosplit $p$-permutation resolutions up to a shift in degree; we will discuss endosplit $p$-permutation resolutions in Section \ref{endosplitppermresolutions}.

        By Proposition \ref{replacewithppermformodules}, given any $V$, there exists a $p$-permutation $kG$-module $W$ such that the classes of weakly $V$-endotrivial (resp. $V$-endosplit-trivial) complexes and the classes of weakly $W$-endotrivial (resp. $W$-endosplit-trivial) complexes coincide. Therefore, when considering weakly $V$-endotrivial complexes or $V$-endosplit-trivial complexes, it suffices to assume $V$ is $p$-permutation, and, more generally, the only types of weak $V$-endotriviality or $V$-endosplit-triviality are those which arise from projectivity relative to subgroups.
    \end{remark}

    For each version of $V$-endotriviality, we have a corresponding group parameterizing it. These definitions will correspond to the category in which they live. Weakly $V$-endotrivial complexes are the invertible objects of $K^b({}_{kG}\sttriv_V)$, while strongly $V$-endotrivial complexes are the invertible objects of $\stK^b({}_{kG}\triv)_V$, and $V$-endosplit-trivial complexes are particularly nice examples of strongly $V$-endotrivial complexes. We will see in the sequel that some of these groups can be defined analogously to how groups of (relatively) endotrivial modules are defined.

    \begin{definition}\label{def:reletrivdef}

        \begin{enumerate}
            \item We define the group of weakly $V$-endotrivial chain complexes $w\calE^V_k(G)$ to be the set of all isomorphism classes of weakly $V$-endotrivial complexes in the category $K^b({}_{kG}\sttriv_V)$. In other words, $w\calE^V_k(G)$ is the Picard group of $K^b({}_{kG}\sttriv_V)$.

            \item We define the group of strongly $V$-endotrivial chain complexes $s\calE^V_k(G)$ to be the set of all isomorphism classes of strongly $V$-endotrivial complexes in the category $\stK^b({}_{kG}\triv)_V$. In other words, $s\calE^V_k(G)$ is the Picard group of $\stK^b({}_{kG}\triv)_V$.

            \item We define the group of $V$-endosplit-trivial chain complexes $e\calE^V_k(G)$ to be the set of all isomorphism classes of $V$-endosplit-trivial complexes in the category $\stK^b({}_{kG}\triv)_V$. By definition, $e\calE^V_k(G)$ is the subgroup of $s\calE^V_k(G)$ consisting of $V$-endosplit-trivial complexes.
        \end{enumerate}

        We write $x\calE^V_k(G)$ to denote any of the above three groups. $x\calE^V_k(G)$ forms an abelian group with group addition induced from $\otimes_k$ and inverse induced by taking duals. For a $V$-endotrivial complex $C$, let $[C]\in x\calE^V_k(G)$ denote the corresponding class in the group. We set $\calE_k(G) := e\calE^{0}_k(G)$. One may easily verify that $\calE_k(G)$ coincides with the construction of $\calE_k(G)$ in \cite{SKM23}.
    \end{definition}

    If $U,V$ are two $kG$ modules that satisfy ${}_{kG}\catmod(U) \subseteq {}_{kG}\catmod(V)$, then we have a group homomorphism $x\calE^{U}_k(G) \to x\calE^{V}_k(G)$. This is injective when $x = s$ or $e$.

    \begin{prop}\label{inclusionintolargerendotrivialgroupisinjective}
        Let $U,V$ be absolutely $p$-divisible $kG$-modules (allowing $U = 0$) such that ${}_{kG}\catmod(U) \subseteq {}_{kG}\catmod(V)$. The homomorphism $\iota: x\calE_k^U(G) \to x\calE_k^V(G)$ induced by $[C] \mapsto [C]$ is well-defined, and if $x = s$ or $e$, then $\iota$ is injective.
    \end{prop}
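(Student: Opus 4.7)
The well-definedness of $\iota$ reduces to two checks. Since ${}_{kG}\catmod(U) \subseteq {}_{kG}\catmod(V)$, Proposition \ref{omnibus} ensures that every $U$-projective chain complex is $V$-projective and every chain complex of $U$-projectives is a chain complex of $V$-projectives, so any $U$-endotrivial complex of flavor $x$ is automatically $V$-endotrivial of the same flavor. Moreover, the inclusion of projectivity classes yields a canonical monoidal quotient functor between the relevant categories --- $K^b({}_{kG}\underline\triv_U) \to K^b({}_{kG}\underline\triv_V)$ when $x = \calW$, and $\underline{K}^b({}_{kG}\triv)_U \to \underline{K}^b({}_{kG}\triv)_V$ when $x = \calS$ or $e$ --- sending invertible classes to invertible classes; this gives $\iota$ as a group homomorphism.

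For injectivity when $x = \calS$ or $e$, the plan is to take $C$ representing a class in $\ker \iota$ and show $[C]_U = [k[0]]_U$. First, I would replace $C$ by its unique reduced representative $C_0$ in $\underline{K}^b({}_{kG}\triv)_U$, using Proposition \ref{isoclassesofstablehomotopy} and the remark following it, so that $C_0$ has no contractible and no $U$-projective indecomposable direct summand. Using Krull-Schmidt, I would decompose $C_0 \cong C_0' \oplus C_0''$ in $Ch^b({}_{kG}\triv)$, where $C_0''$ is the maximal $V$-projective direct summand. Then $C_0'$ must be the unique reduced representative of the class $\iota([C]_U) = [k[0]]_V$ in $\underline{K}^b({}_{kG}\triv)_V$. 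Since $V$ is absolutely $p$-divisible, $k[0]$ is itself already reduced (it is neither contractible nor $V$-projective), so uniqueness forces $C_0' \cong k[0]$, whence $C_0 \cong k[0] \oplus C_0''$ where $C_0''$ is $V$-projective and (inherited from $C_0$) has no $U$-projective and no contractible indecomposable summand.

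The next step would be to show $C_0'' = 0$ by exploiting the strong $U$-endotriviality of $C_0$. Expanding with the decomposition above yields
\[ C_0^* \otimes_k C_0 \cong k[0] \oplus C_0'' \oplus (C_0'')^* \oplus (C_0'')^* \otimes_k C_0'', \]
and by hypothesis this is homotopy equivalent to $k[0] \oplus D$ with $D$ a $U$-projective chain complex. Applying Proposition \ref{isoclassesofstablehomotopy} in the case $V = 0$ upgrades this homotopy equivalence to an honest $Ch^b({}_{kG}\triv)$-isomorphism after adding contractible complexes to both sides. Krull-Schmidt in $Ch^b({}_{kG}\triv)$ then forces every indecomposable summand of $C_0''$ to be either $U$-projective (via Proposition \ref{omnibus}(a) applied to summands of $D$) or contractible, contradicting the properties of $C_0''$ unless $C_0'' = 0$. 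Hence $C_0 \cong k[0]$ and $[C]_U = [k[0]]_U$. The case $x = e$ then follows immediately because $e\calE_k^U(G) \leq \calS\calE_k^U(G)$.

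The main obstacle will be carefully juggling the two distinct notions of reduced representative (relative to $U$ versus relative to $V$), keeping the decomposition $C_0 \cong C_0' \oplus C_0''$ consistent across both stabilizations, and applying Krull-Schmidt in the appropriate category at each step. The argument does not adapt to $x = \calW$ because the category $K^b({}_{kG}\underline\triv_V)$ carries out its stabilization degreewise rather than at the level of chain complexes, so the analogue of $C_0''$ need not be a chain complex summand at all and cannot be ruled out by this Krull-Schmidt argument.
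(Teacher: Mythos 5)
Your argument is correct, but it is more elaborate than the paper's, which leans directly on Proposition \ref{endotrivialsummand} and Remark \ref{equivdefinitionofgroups}: a strongly $U$-endotrivial (resp.\ $U$-endosplit-trivial) complex $C$ has a unique \emph{indecomposable} $U$-endotrivial direct summand, its cap $C_0$, and the class of $C$ in $\calS\calE_k^U(G)$ (resp.\ $e\calE_k^U(G)$) is determined by the isomorphism type of $C_0$. Since $C_0$ is $U$-endotrivial, it is $V$-endotrivial, and since it is indecomposable and neither contractible nor $V$-projective (absolute $p$-divisibility of $V$ rules both out), $C_0$ is simultaneously the $V$-cap of $C$; so $[C]_V=[k[0]]_V$ forces $C_0\cong k[0]$ and hence $[C]_U=[k[0]]_U$ with no further work. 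In your write-up you never observe that your reduced representative $C_0$ is in fact indecomposable --- this is exactly what Proposition \ref{endotrivialsummand} gives --- so you instead carry around a potential $V$-projective residue $C_0''$ and kill it by a Krull--Schmidt analysis of $C_0^*\otimes_k C_0$. That analysis is sound (one should just note that $k[0]$ cannot occur as a summand of $C_0''$, of the contractible padding, or of $D$, because $V$ and $U$ are absolutely $p$-divisible, so the lone $k[0]$ on each side matches), and your reduction of the $x=e$ case to $x=\calS$ via the inclusion $e\calE_k^U(G)\leq\calS\calE_k^U(G)$ is fine, as is your diagnosis of why the method fails for $x=\calW$. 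So the proposal is a valid alternative proof; what the paper's route buys is brevity, by front-loading the indecomposability of the cap rather than reproving it via the tensor-square.
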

    \begin{proof}
        The map $\iota$ is well-defined, since if two relatively endotrivial chain complexes $C_1,C_2$ are isomorphic in $K^b({}_{kG}\underline\catmod_U)$ (resp. $\stK^b({}_{kG}\catmod)_U$), they are isomorphic in $k^b({}_{kG}\stmod_V)$ (resp. $\stK^b({}_{kG}\catmod)_V$).

        Assume $x = s$ or $e$. $\ker(\iota)$ consists of all equivalence classes of strongly $U$-endotrivial (resp. $U$-endosplit-trivial) complexes which, upon being considered strongly $V$-endotrivial (resp. $V$-endosplit-trivial), have indecomposable endotrivial summand isomorphic to $k[0]$. However, it is straightforward to see that this is simply the equivalence class $[k[0]]$, so $\ker(\iota)$ is trivial.
    \end{proof}

    \begin{remark}
        In particular, we have group homomorphisms $\iota: \calE_k(G) \to x\calE_k^V(G)$ induced by considering an endotrivial $kG$-complex as a $V$-endotrivial $kG$-complex. If $x = e$ or $s$, this is an inclusion. It is not, however, for $x = w$. For example, if $G = C_2$ and $V = kC_2$, the endotrivial chain complex $kC_2 \twoheadrightarrow k$ with $k$ in degree 0 is identified with the trivial weakly $V$-endotrivial chain complex $k[0]$.

        Additionally, we have a group homomorphism $s\calE^V_k(G) \to w\calE^V_k(G)$ induced by the functor $\stK^b({}_{kG}\triv)_V \to K^b({}_{kG}\sttriv_V)$. This homomorphism is in general not injective, as the example given in the previous paragraph demonstrates. Finally, we have an injective homomorphism $T_{k,V}(G,S) \to x\calE_k^V(G)$ given by $[M] \mapsto [M[0]]$.
    \end{remark}

    \begin{prop}\label{endotrivialsummand}
        If $C$ is a weakly $V$-endotrivial (resp. strongly $V$-endotrivial, $V$-endosplit-trivial) complex of $kG$-modules. $C$ has a unique indecomposable direct summand (in $Ch^b({}_{kG}\triv)$) which is weakly $V$-endotrivial (resp. strongly $V$-endotrivial, $V$-endosplit-trivial), and all other direct summands of $C$ are isomorphic to the zero complex in $K^b({}_{kG}\sttriv_V)$ (resp. $\stK^b({}_{kG}\triv)_V$, resp. $\stK^b({}_{kG}\triv)_V$).
    \end{prop}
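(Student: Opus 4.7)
The plan is to decompose $C$ into indecomposables in $Ch^b({}_{kG}\triv)$ and apply Krull--Schmidt in the appropriate homotopy category to isolate the unique indecomposable summand carrying the $V$-endotriviality. Let $\mathcal{H}$ denote $K^b({}_{kG}\underline{\triv}_V)$ in the weakly $V$-endotrivial case and $\underline{K}^b({}_{kG}\triv)_V$ in the strongly $V$-endotrivial and $V$-endosplit-trivial cases; in each case $\mathcal{H}$ is a $k$-linear symmetric monoidal Krull--Schmidt category with tensor unit $k[0]$. A preliminary observation is that $k[0]$ is indecomposable in $\mathcal{H}$: since $V$ is absolutely $p$-divisible, $k$ is not $V$-projective, so no nonzero endomorphism of $k[0]$ factors through a $V$-projective object; since $k[0]$ is concentrated in a single degree it admits no nontrivial chain homotopies, whence $\End_{\mathcal{H}}(k[0]) = k$ is local.

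Decompose $C = C_1 \oplus \cdots \oplus C_n$ into indecomposables in $Ch^b({}_{kG}\triv)$ by Krull--Schmidt. The $V$-endotriviality of $C$ translates in $\mathcal{H}$ to $k[0] \cong C^* \otimes_k C = \bigoplus_{i,j} C_i^* \otimes_k C_j$. Krull--Schmidt in $\mathcal{H}$ and indecomposability of $k[0]$ then yield a unique pair $(i_0,j_0)$ with $C_{i_0}^* \otimes_k C_{j_0} \cong k[0]$ in $\mathcal{H}$, while $C_i^* \otimes_k C_j \cong 0$ in $\mathcal{H}$ for every other pair. We claim $i_0 = j_0$: otherwise $C_{j_0}^* \otimes_k C_{j_0} \cong 0$ in $\mathcal{H}$, and dualizing $C_{i_0}^* \otimes_k C_{j_0} \cong k[0]$ yields $C_{i_0} \otimes_k C_{j_0}^* \cong k[0]$, so
\[
C_{j_0} \;\cong\; (C_{i_0} \otimes_k C_{j_0}^*) \otimes_k C_{j_0} \;\cong\; C_{i_0} \otimes_k (C_{j_0}^* \otimes_k C_{j_0}) \;\cong\; 0,
\]
contradicting $C_{i_0}^* \otimes_k C_{j_0} \cong k[0] \not\cong 0$. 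Hence $C_{i_0}^* \otimes_k C_{i_0} \cong k[0]$ in $\mathcal{H}$, and for $i \neq i_0$, tensoring $C_{i_0}^* \otimes_k C_i \cong 0$ with $C_{i_0}$ gives $C_i \cong 0$ in $\mathcal{H}$.

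To promote $C_{i_0}^* \otimes_k C_{i_0} \cong k[0]$ in $\mathcal{H}$ to an actual decomposition of the requested form, apply Krull--Schmidt in $K^b({}_{kG}\triv)$ to $C^* \otimes_k C \simeq k[0] \oplus D$ and match the indecomposable summands of $\bigoplus_{i,j} C_i^* \otimes_k C_j$ against those of $k[0] \oplus D$: the summand $k[0]$ must lie inside $C_{i_0}^* \otimes_k C_{i_0}$, while all other $C_i^* \otimes_k C_j$ are summands of $D$ in $K^b({}_{kG}\triv)$. This gives $C_{i_0}^* \otimes_k C_{i_0} \simeq k[0] \oplus D_0$ with $D_0$ a direct summand of $D$. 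In each of the three cases, $D_0$ inherits the requisite form: a summand of a bounded chain complex of $V$-projective modules is again a chain complex of $V$-projective modules degreewise, a summand of a $V$-projective chain complex is itself $V$-projective by Proposition \ref{omnibus}, and a summand of $M[0]$ in $K^b({}_{kG}\triv)$ is homotopy equivalent to $M'[0]$ with $M' \mid M$, hence $V$-projective. Uniqueness of $C_{i_0}$ up to isomorphism in $Ch^b({}_{kG}\triv)$ is immediate from Krull--Schmidt.

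The main technical obstacle will be carefully verifying the Krull--Schmidt property of $\mathcal{H}$ and the indecomposability of $k[0]$ therein, together with the bookkeeping that identifies which flavor of projectivity is inherited by the remainder $D_0$ in each of the three cases.
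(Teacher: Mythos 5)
Your proposal is correct and takes essentially the same route as the paper's proof, which decomposes $C$, uses Krull--Schmidt and the indecomposability of $k[0]$ in the relevant stable homotopy category to show exactly one diagonal term $C_{i_0}^*\otimes_k C_{i_0}$ survives, and then tensors with $C_{i_0}$ to kill the remaining summands. Your final step promoting the isomorphism in $\mathcal{H}$ back to the literal form required by Definition \ref{endotrivialdef} (identifying the flavor of the complement $D_0$ via Krull--Schmidt in $K^b({}_{kG}\triv)$, and in the strong case invoking Proposition \ref{isoclassesofstablehomotopy} rather than Proposition \ref{omnibus} alone, since $D_0$ is a priori only a summand up to contractibles) is a point the paper elides, but it is consistent with its argument.
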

    \begin{proof}
        Let ${}_{kG}\stC_V$ denote either $K^b({}_{kG}\sttriv_V)$ or $\stK^b({}_{kG}\triv)_V$. Suppose $C$ is $V$-endotrivial and is decomposable. Write $C = C_1 \oplus C_2$, then \[(C_1 \oplus C_2)^* \otimes_k (C_1 \oplus C_2) \cong \End_k(C_1) \oplus \End_k(C_2) \oplus (C_1^* \otimes_k C_2) \oplus (C_1^* \otimes_k C_2)^* \cong  k[0]\] in ${}_{kG}\stC_V$. $(C_1 \otimes_k C_2^*)$ cannot have a direct summand isomorphic to $k$ in ${}_{kG}\stC_V$, since if it did, so would its dual. Therefore, exactly one value of $i \in \{1, 2\}$ satisfies $k \mid \End_k(C_i)$, and all other terms in the above isomorphism are isomorphic to the zero complex (denoted by 0) in ${}_{kG}\stC_V$. Without loss of generality, assume $i = 1$. Then, \[C_2 \cong (C_1 \otimes_k C_1^*)\otimes_k C_2 \cong C_1 \otimes_k (C_1^* \otimes_k C_2) \cong C_1 \otimes_k 0 \cong 0.\] Thus $C_2$ is the zero complex in ${}_{kG}\stC_V$. It follows that $\End_k(C_1) \cong k$ in ${}_{kG}\stC_V$, and repeating this argument until $C_1$ is indecomposable yields the result.
    \end{proof}

    \begin{definition}
        Let $C$ be a $V$-endotrivial complex. We call the unique indecomposable $V$-endotrivial direct summand of $C$ the \textit{cap} of $C$.
    \end{definition}

    \begin{remark}\label{equivdefinitionofgroups}
        \begin{enumerate}
            \item It is important to note that an element $[C] \in w\calE_k^V(G)$ may have more than one representative which is indecomposable as a chain complex of $kG$-modules. On the other hand, the isomorphism classes of chain complexes in a single isomorphism class of $K^b({}_{kG}\sttriv_V)$ is more easily understood due to Theorem \ref{isoclassesofstablehomotopy}. In particular, given any $[C] \in s\calE_k^V(G)$, there is a unique indecomposable chain complex representative $C$ of $[C].$

            \item From the previous proposition, we obtain an equivalent definition of the groups $s\calE_k^V(G)$ and $e\calE_k^V(G)$ which is independent of $\stK^b({}_{kG}\triv)_V$ and analogous to the construction of the Dade group. We impose the following equivalence relation on the set of strongly $V$-endotrivial (resp. $V$-endosplit-trivial) complexes as follows: given two strongly $V$-endotrivial (resp. $V$-endosplit-trivial) complexes $C,D$, we write $C \sim D$ if their caps $C_0, D_0$ satisfy $C_0 \cong D_0$ as chain complexes. Define the group $s\calE_k^V(G)$ (resp. $e\calE_k^V(G)$) to be the group generated by these equivalences classes, with group operation induced by $\otimes_k$. It is a straightforward verification to check that these two group definitions coincide.
        \end{enumerate}
    \end{remark}

    Since restriction, inflation, and the Brauer construction are additive functors and naturally commute with tensor products of $p$-permutation modules, the following proposition is a direct consequence of Proposition \ref{indrespreservevproj}.

    \begin{prop}
        Let $H \leq G, N \trianglelefteq G,$ and $P \in s_p(G)$.
        \begin{enumerate}
            \item If $C$ is a $V$-endotrivial complex of $kG$-modules, $\Res^G_H C$ is a $\Res^G_H V$-endotrivial complex of $kH$-modules, and restriction induces a group homomorphism $x\calE_k^V(G) \to x\calE_k^{\Res^G_H V}(H)$.
            \item Let $V$ be an absolutely $p$-divisible $k[G/N]$-module. If $C$ is a $V$-endotrivial complex of $k[G/N]$-modules, $\Inf^G_{G/N} C$ is a $\Inf^G_{G/N} V$-endotrivial complex of $kG$-modules, and and inflation induces an injective group homomorphism $x\calE_k^V(G/N) \to x\calE_k^{\Inf^G_{G/N} V}(G)$.
            \item If $C$ is a $V$-endotrivial complex of $kG$-modules, $C(P)$ is a $V(P)$-endotrivial complex of $k[N_G(P)/P]$-modules for any $P \in s_P(G)$, and the Brauer construction induces a group homomorphism $x\calE_k^V(G) \to x\calE_k^{V(P)}(N_G(P)/P)$. In particular, if $V(P) = 0$, the Brauer construction induces a group homomorphism $x\calE_k^V(G) \to \calE_k(N_G(P)/P)$.
        \end{enumerate}
    \end{prop}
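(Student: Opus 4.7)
The plan is to treat all three parts uniformly. Let $F$ denote any one of $\Res^G_H$, $\Inf^G_{G/N}$, or $(-)(P)$. Each is an additive $k$-linear functor that sends $k[0]$ to $k[0]$ and commutes with $k$-duality; each also commutes with $\otimes_k$ (for the Brauer construction, on $p$-permutation inputs by \cite[Theorem 5.8.10]{L181}, which is our setting since $C \in Ch^b({}_{kG}\triv)$). Being additive, $F$ extends degreewise to chain complexes and preserves homotopy equivalences. Applying $F$ to the defining relation $C^* \otimes_k C \simeq k[0] \oplus D$ therefore yields $(FC)^* \otimes_k (FC) \simeq k[0] \oplus FD$, so verifying $F(V)$-endotriviality of $FC$ reduces to checking that $FD$ lies in the appropriate class of $F(V)$-projective objects.

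This last check is precisely what Proposition \ref{indrespreservevproj} supplies: part (a) for restriction, part (c) for inflation, and part (e) for the Brauer construction (noting that $D$ consists of $p$-permutation modules, so that (e) applies). The three flavors of $V$-endotriviality are accommodated by invoking the relevant clause degreewise (for the weak case), on the full chain complex (for the strong case), or to a single module (for the endosplit case, using $F(M[0]) = F(M)[0]$). This simultaneously establishes well-definedness of the induced map on equivalence classes in the appropriate stable homotopy category; the homomorphism property on $x\calE^V_k(G)$ then follows because $F$ commutes with $\otimes_k$.

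The only step requiring genuine attention will be the injectivity of inflation. I plan to deduce it from the fact that $\Inf^G_{G/N}$ preserves indecomposability: since $N$ acts trivially on $\Inf^G_{G/N} M$, one has $\End_{kG}(\Inf^G_{G/N} M) = \End_{k[G/N]}(M)$, and this identity extends degreewise to chain complexes. By Proposition \ref{endotrivialsummand}, the cap of $\Inf^G_{G/N} C$ is therefore the inflation of the cap of $C$. Moreover, $\Inf^G_{G/N}$ is faithful on objects (with $N$-fixed points serving as a retraction, $(\Inf^G_{G/N} X)^N \cong X$, and preserving homotopy equivalences), so the cap of $C$ must be isomorphic to $k[0]$ whenever its inflation is, giving $[C] = [k[0]]$ in $x\calE^V_k(G/N)$. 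The main (minor) obstacle throughout is bookkeeping: matching each variant of $V$-endotriviality with the appropriate clause of Proposition \ref{indrespreservevproj} and confirming that $FD$ stays in the class required for that variant.
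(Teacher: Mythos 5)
Your proof is correct and follows essentially the same route as the paper, which dismisses the proposition as a direct consequence of the functoriality statements (the paper cites Lemma \ref{preservedunderfunctors}, but the relevant ingredient is in fact Proposition \ref{indrespreservevproj}, as you correctly identify). You supply more detail than the paper does, in particular for the injectivity of inflation, which the paper declares to be ``clear'' in the remark immediately following; your argument via the observation that $\Inf^G_{G/N}$ preserves indecomposability (because $\End_{kG}(\Inf^G_{G/N}M)=\End_{k[G/N]}(M)$) and is reflected by the $N$-fixed-point functor is a clean way to make that explicit. One small caveat worth flagging, present in the paper as well: for the Brauer-construction case under \emph{strong} $V$-endotriviality, clause (e) of Proposition \ref{indrespreservevproj} requires $V$ to be $p$-permutation, and the paper explicitly notes that one cannot in general replace $V$ by a $p$-permutation module for strong $V$-endotriviality (unlike the weak and endosplit cases). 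So the conclusion that $D(P)$ is $V(P)$-projective, when $D$ is a $V$-projective complex, tacitly assumes $V$ is $p$-permutation; your parenthetical about ``$p$-permutation inputs'' covers $C$ and $D$ themselves but not $V$. This is a defect of the stated generality rather than of your argument per se, but it is worth being explicit about if one wants the statement to hold verbatim for an arbitrary absolutely $p$-divisible $V$.
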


    It is clear that inflation induces an injective group homomorphism, but the kernel and cokernel of the homomorphisms induced by restriction and the Brauer construction are more challenging to compute. We will study restriction to subgroups containing a Sylow $p$-subgroup in the sequel. Note that $\Res^G_H V$ or $V(P)$ may not be absolutely $p$-divisible. On the other hand, inflation always preserves absolute $p$-divisibility.

    \begin{prop}\label{vtxset}
        Let $V$ be an absolutely $p$-divisible $kG$-module, and let $C$ be an indecomposable $V$-endotrivial complex. Then $C$ has vertex set $\Syl_p(G)$.
    \end{prop}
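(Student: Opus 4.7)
The plan is to reduce to the weakly $V$-endotrivial case and then use the Brauer construction at a Sylow $p$-subgroup to detect the vertex. First, a strongly $V$-endotrivial or $V$-endosplit-trivial complex is in particular weakly $V$-endotrivial, and the vertex of $C$ depends only on $C$ as an object of $Ch^b({}_{kG}\triv)$, so it suffices to treat the weak case. By the remark following Proposition \ref{replacewithppermformodules} together with Lemma \ref{preservedunderfunctors}, I will replace $V$ by a $p$-permutation module with the same projectivity class and thereby assume $V$ is $p$-permutation; absolute $p$-divisibility then translates to $V(S) = 0$ for every $S \in \Syl_p(G)$.

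Fix $S \in \Syl_p(G)$ and let $P$ be a vertex of $C$. I apply the Brauer construction $(-)(S)$ to the homotopy equivalence $C^* \otimes_k C \simeq k[0] \oplus D$ provided by weak $V$-endotriviality, where $D$ may be taken to consist componentwise of $V$-projective $p$-permutation modules, since $C^* \otimes_k C$ is itself a $p$-permutation complex. Using that $(-)(S)$ is additive and $k$-linear, preserves chain homotopies, and satisfies the natural isomorphisms $(X \otimes_k Y)(S) \cong X(S) \otimes_k Y(S)$ and $X^*(S) \cong X(S)^*$ for $p$-permutation $X, Y$, I obtain
\[
C(S)^* \otimes_k C(S) \simeq k[0] \oplus D(S)
\]
in $K^b({}_{k[N_G(S)/S]}\triv)$. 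Each component $D_i(S)$ is $V(S)$-projective by Proposition \ref{indrespreservevproj}(e), and since $V(S) = 0$ this forces $D(S) = 0$. Hence $C(S)^* \otimes_k C(S) \simeq k[0]$, which is not null-homotopic, so there exists some $i$ with $C_i(S) \neq 0$.

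Finally, $P$-projectivity of $C$ gives $C \mid \Ind^G_P \Res^G_P C$, so each $C_i$ is a $P$-projective $p$-permutation $kG$-module. The vertices of the indecomposable summands of $C_i$ are therefore contained in $P$ up to $G$-conjugacy, and the nonvanishing of $C_i(S)$ forces $S$ to lie in some such vertex up to conjugacy, so $S \leq_G P$. Since $S$ is Sylow and $P$ is a $p$-subgroup, we conclude $|P| = |S|$, so $P \in \Syl_p(G)$.

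The main technical obstacle is ensuring the Brauer construction behaves cleanly on the entire homotopy equivalence: one needs to arrange a $p$-permutation representative of $D$ so that Proposition \ref{indrespreservevproj}(e) applies componentwise to kill $D(S)$. Once $V$ has been replaced by a $p$-permutation module, this arrangement is routine, and the remainder of the argument is a direct detection via the Brauer construction applied to the defining endotriviality relation.
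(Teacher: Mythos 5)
Your proof is correct and follows essentially the same route as the paper's: reduce to the weakly $V$-endotrivial case, apply the Brauer construction at $S$ to the relation $C^*\otimes_k C \simeq k[0]\oplus D$ to conclude $C(S)\neq 0$, and then use that the components of a $P$-projective complex are $P$-projective $p$-permutation modules to force $S\leq_G P$. You simply spell out the steps (replacing $V$ by a $p$-permutation module, killing $D(S)$ via Proposition \ref{indrespreservevproj}(e)) that the paper's terser proof leaves implicit.
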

    \begin{proof}
        It suffices to show this for weakly $V$-endotrivial complexes. $C(S) \neq 0$ for all $S \in \Syl_p(G)$ since $V$ is not $p$-divisible, therefore by properties of $p$-permutation modules, has components which are not $P$-projective for any $P < S$. Observe that if a chain complex $C$ is $H$-projective, then each of its components is $H$-projective as well. $C$ contains at least one indecomposable $p$-permutation module with vertex $S$ since $C(S)$ is nonzero (as $C(S) \otimes_k C(S)^* \simeq k$), so we are done.
    \end{proof}

    We will leave aside arbitrary strongly $V$-endotrivial complexes in this paper, choosing to focus on the subclass of $V$-endosplit-trivial complexes, which appear to be the most well-behaved examples of invertible objects of $\stK^b({}_{kG}\triv)_V$. Attempting to classify all of the invertible objects of $\stK^b({}_{kG}\triv)_V$ may be a possible subject of study for future research.

    \section{An equivalent definition of weak $V$-endotriviality}

    Recall that by Proposition \ref{replacewithppermformodules}, for weak $V$-endotriviality and $V$-endosplit-triviality, we may always assume $V$ is $p$-permutation. We will prove an important equivalent formulation of weak $V$-endotriviality, assuming that $V$ is $p$-permutation.

    \begin{theorem}{\cite[Theorem 5.6]{SKM23}}\label{boucthmgeneralization}
        Let $C$ be a chain complex of $p$-permutation $kG$-modules, and let $\mathcal{X}$ be a subset of the $p$-subgroups of $G$ which is closed under $G$-conjugation and taking subgroups. The following are equivalent:
        \begin{enumerate}
            \item For all $P \not\in\mathcal{X}$, $C(P)$ is acyclic.
            \item There exists a chain complex $D$ with $C\simeq D$ such that for all $i \in \Z$, $D_i$ is $\calX$-projective or zero.
        \end{enumerate}
    \end{theorem}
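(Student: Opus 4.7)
The implication (b) $\Rightarrow$ (a) is straightforward: if each $D_i$ is $\calX$-projective or zero, then every indecomposable summand of each $D_i$ is a $p$-permutation module with vertex in $\calX$ (since $\calX$ is closed under taking subgroups, any vertex of an $H$-projective indecomposable $p$-permutation module, $H \in \calX$, lies in $\calX$). For $P \notin \calX$, closure of $\calX$ under $G$-conjugation and subgroups ensures $P$ is not $G$-subconjugate to any subgroup in $\calX$, so $D_i(P) = 0$ for all $i$, whence $D(P) = 0$ and $C(P) \simeq D(P)$ is acyclic.

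For (a) $\Rightarrow$ (b), I would argue by induction on the finite set $\calV$ of $G$-conjugacy classes of $p$-subgroups outside $\calX$ that appear as vertices of indecomposable direct summands of the components of $C$. If $\calV = \emptyset$, every $C_i$ is already $\calX$-projective and one takes $D = C$. Otherwise, pick $[Q] \in \calV$ maximal with respect to subconjugacy. The key claim is that $C(Q)$ is a bounded acyclic complex of projective $k[N_G(Q)/Q]$-modules. Indeed, for any indecomposable summand $M$ of $C_i$ with vertex $R$: if $R \in \calX$ then $M(Q) = 0$ (else $Q \leq_G R$ would force $Q \in \calX$); if $R \notin \calX$ and $R$ is not $G$-conjugate to $Q$, then maximality of $[Q]$ forbids $Q \leq_G R$, so again $M(Q) = 0$; and if $R$ is $G$-conjugate to $Q$, then $M(Q)$ is an indecomposable projective $k[N_G(Q)/Q]$-module by the Brauer correspondence for trivial source modules. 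Since $C(Q)$ is acyclic by hypothesis and consists of projective modules, it is contractible, and decomposes as a direct sum of contractible disks $0 \to P_j \xrightarrow{\sim} P_j \to 0$.

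The final step is to lift this decomposition to $C$, splitting off a contractible complex of vertex-$Q$ $kG$-modules to obtain $C' \simeq C$ with $\calV$ strictly smaller, whereupon induction applies. The main obstacle is making this lifting rigorous. The two facts I would use are (i) the Brauer functor $(-)(Q)$ induces a bijection between isomorphism classes of indecomposable $p$-permutation $kG$-modules with vertex $Q$ and isomorphism classes of indecomposable projective $k[N_G(Q)/Q]$-modules, and (ii) a morphism between such vertex-$Q$ indecomposables is an isomorphism if and only if its Brauer quotient is, because the kernel of the induced map $\End_{kG}(M) \to \End_{k[N_G(Q)/Q]}(M(Q))$ lies in the Jacobson radical of the local ring $\End_{kG}(M)$. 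Combining (i) and (ii), each disk $(P_j \xrightarrow{\sim} P_j)$ in $C(Q)$ lifts to a contractible disk $(M_j \xrightarrow{\sim} M_j)$ of vertex-$Q$ $kG$-modules that splits off $C$ by Krull-Schmidt. Removing all these disks yields a complex $C'$ with no vertex-$Q$ summands, reducing $\calV$ by the class $[Q]$ and completing the inductive step.
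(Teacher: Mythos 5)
The paper gives no proof of its own here---it simply defers to \cite[Theorem 5.6]{SKM23}---and your argument is the standard maximal-vertex induction used for results of this type: both implications are correct, and your facts (i) and (ii), combined with the usual Gaussian elimination on components of the differential, do make the lifting step rigorous (one peels off, from the top degree carrying a vertex-$[Q]$ summand downward, a split injection whose Brauer quotient at $Q$ is forced to split by contractibility of $C(Q)$). The one imprecision is your justification of (ii): knowing the kernel of $\End_{kG}(M)\to\End_{k[N_G(Q)/Q]}(M(Q))$ lies in the radical does not by itself show that an endomorphism with invertible Brauer quotient is invertible; instead argue that a non-unit of the local ring $\End_{kG}(M)$ lies in its nilpotent radical, so its image under the algebra homomorphism $(-)(Q)$ is nilpotent and hence not an isomorphism of the nonzero module $M(Q)$.
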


    \begin{theorem}\label{equivdefinitionweakendotrivial}
        Let $C \in Ch^b({}_{kG}\triv)$, let $V$ be a $p$-permutation $kG$-module which is absolutely $p$-divisible, and let $\calX_V$ be the set of $p$-subgroups $P$ of $G$ for which $V(P) = 0$. Then $C$ is weakly $V$-endotrivial if and only if $C(P)$ has nonzero homology concentrated in exactly one degree for all $P \in \calX_V$, with the nontrivial homology having $k$-dimension one.
    \end{theorem}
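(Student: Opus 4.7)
The proof divides into two directions: a direct computation for the forward direction using the Brauer construction and the K\"{u}nneth formula, and a substantive converse that analyzes the mapping cone of the coevaluation map $\eta: k[0] \to \End_k(C)$ corresponding to $\id_C$ via Theorem~\ref{boucthmgeneralization}.

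For the forward direction, suppose $\End_k(C) \simeq k[0] \oplus D$ with $D$ a chain complex of $V$-projective modules, and apply $(-)(P)$ for $P \in \calX_V$. Since $C$ consists of $p$-permutation modules, the Brauer construction commutes with $\otimes_k$, giving $\End_k(C(P)) \cong C(P)^* \otimes_k C(P) \simeq k[0] \oplus D(P)$. Each $D_i$ is $V$-projective with $V(P) = 0$, forcing $D_i(P) = 0$, so $\End_k(C(P)) \simeq k[0]$ in $K^b({}_{k[N_G(P)/P]}\triv)$. Over the field $k$, the K\"{u}nneth formula yields
\[
H_0(C(P)^* \otimes_k C(P)) \cong \bigoplus_n \End_k(H_n(C(P))),
\]
and this being isomorphic to $k$ forces exactly one degree $d$ with $H_d(C(P)) \neq 0$ and $\dim_k H_d(C(P)) = 1$.

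For the converse, assume the homology hypothesis. Then for each $P \in \calX_V$, the induced map $\eta(P): k \to \End_k(C(P))$ sends $1$ to $\id_{C(P)}$, which under the K\"{u}nneth identification $H_0(\End_k(C(P))) \cong \End_k(H_d(C(P))) = k$ (where $d$ is the unique degree of nonzero homology) corresponds to $\id_{H_d(C(P))}$, the generator. Hence $\eta(P)$ is a quasi-isomorphism, so $\operatorname{Cone}(\eta)(P) = \operatorname{Cone}(\eta(P))$ is acyclic for every $P \in \calX_V$. Setting $\calY := s_p(G) \setminus \calX_V$ and noting that $\calY$ is closed under $G$-conjugation and taking subgroups (since $V$ is $p$-permutation, $V(Q) \neq 0$ iff $Q$ is subconjugate to a vertex of some indecomposable summand of $V$), Theorem~\ref{boucthmgeneralization} applied with $\calX = \calY$ produces $D \simeq \operatorname{Cone}(\eta)$ in $Ch^b({}_{kG}\triv)$ whose components are $\calY$-projective, hence $V$-projective by Proposition~\ref{replacewithppermformodules}.

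\textbf{Main obstacle.} The remaining step, which I expect to be the technical heart of the argument, is to upgrade this to the direct-sum decomposition $\End_k(C) \simeq k[0] \oplus D'$ in $Ch^b({}_{kG}\triv)$ with $D' \in Ch^b({}_{kG}\triv(V))$, as required by the definition of weak $V$-endotriviality. My plan is to pass to $K^b({}_{kG}\underline{\triv}_V)$, the natural ambient triangulated category for weak $V$-endotriviality: since each component of $D$ is $V$-projective and hence zero in ${}_{kG}\underline{\triv}_V$, the cone $\operatorname{Cone}(\eta)$ becomes the zero object of $K^b({}_{kG}\underline{\triv}_V)$, forcing $\eta$ to be an isomorphism there by the standard principle that a morphism with zero cone in a triangulated category is itself an isomorphism. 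The final technical step is to lift this isomorphism back to $Ch^b({}_{kG}\triv)$, using Krull-Schmidt in $Ch^b({}_{kG}\triv)$ together with an analog of Proposition~\ref{stablecatisomorphisms} adapted to bounded chain complexes over the quotient category ${}_{kG}\underline{\triv}_V$, to extract the explicit direct-sum form $k[0] \oplus D'$.
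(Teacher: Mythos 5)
Your forward direction and the bulk of your converse coincide with the paper's argument: the paper likewise applies the Brauer construction and the K\"unneth formula for the "only if" direction, and for the "if" direction it constructs the same coevaluation map $\phi\colon k[0]\to C^*\otimes_k C$ from the tensor-hom adjunction, checks that $\operatorname{Cone}(\phi)(P)$ is acyclic for $P\in\calX_V$, and invokes Theorem~\ref{boucthmgeneralization} with $\calX=s_p(G)\setminus\calX_V$ to conclude that the cone is homotopy equivalent to a complex with $V$-projective components. Up to that point your proposal is correct and essentially identical to the paper.

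The gap is exactly where you flag it. Your plan to finish by passing to $K^b({}_{kG}\underline{\triv}_V)$, concluding that $\eta$ is an isomorphism there, and then "lifting back" is not supported by anything in the paper: Proposition~\ref{stablecatisomorphisms} and Proposition~\ref{isoclassesofstablehomotopy} concern ${}_{kG}\underline{\calC}_V$ and $\underline{K}^b({}_{kG}\catmod)_V$ (complexes first, then the quotient on morphisms), not $K^b({}_{kG}\underline{\triv}_V)$ (quotient first, then complexes), and the analogue you would need --- that an isomorphism $k[0]\cong\End_k(C)$ in $K^b({}_{kG}\underline{\triv}_V)$ yields a genuine homotopy equivalence $\End_k(C)\simeq k[0]\oplus D'$ in $K^b({}_{kG}\triv)$ with $D'$ componentwise $V$-projective --- is precisely the content of the theorem being proved, so the detour is circular as stated. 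The paper instead finishes directly from what you already have: write $\operatorname{Cone}(\eta)\cong D''\oplus A$ with $A$ split acyclic and $D''$ reduced; since $\operatorname{Cone}(\eta)\simeq D$ with all $D_i$ projective relative to $\calY=s_p(G)\setminus\calX_V$, Krull--Schmidt forces the components of $D''$ to be $\calY$-projective. The shifted copy of $k$ sitting as a direct summand of $\operatorname{Cone}(\eta)_{-1}$ has vertex $S\in\Syl_p(G)$, and $S\notin\calY$ because $V$ is absolutely $p$-divisible, so that copy of $k$ must lie in the contractible part $A$, i.e.\ $\operatorname{Cone}(\eta)$ has $0\to k\xrightarrow{\sim}k\to 0$ as a direct summand meeting the shifted $k[0]$. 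Unwinding the mapping cone structure, this exhibits $\eta$ as a split monomorphism of complexes whose cokernel is homotopy equivalent to a complex of $V$-projective modules, which is the required decomposition $\End_k(C)\simeq k[0]\oplus E$. Replacing your quotient-category detour with this Krull--Schmidt argument on the cone closes the gap.
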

    \begin{proof}
        Note that since $V$ is absolutely $p$-divisible, $\calX_V$ contains Sylow $p$-subgroups. First, suppose $C$ is $V$-endotrivial, i.e. $C \otimes_k C^* \simeq k[0] \oplus D$, for some chain complex $D$ which contains only $V$-projective modules. Note that for any $V$-projective module $M$ and $P \in \calX_V$, $M(P) = 0$. For any $p$-subgroup $P \in \calX_V$, we have \[k[0] \cong (k[0]\oplus D)(P) \simeq (C\otimes_k C^*)(P)\cong C(P)\otimes_k (C^*)(P)\cong C(P) \otimes_k (C(P))^*.\] It follows by the K\"unneth formula that $C(P)$ has nonzero homology concentrated in one degree with $k$-dimension one.

        Conversely, suppose that for all $p$-subgroups $P \in \calX_V$, $C(P)$ has homology concentrated in one degree with $k$-dimension one. We have the tensor-hom adjunction \[\Hom_{kG}(C \otimes_k -, -) \cong \Hom_{kG}(-, \Hom_k(C, -)) ,\] and taking the unit of the adjunction we obtain an injective chain complex homomorphism $\phi: k[0] \to \Hom_{kG}(C, k[0])\otimes_k C \cong C^* \otimes_k C$. Let $D$ be the mapping cone of $\phi$. For $P \in \calX_V$, it follows by assumption that $\phi(P)$ induces a homotopy equivalence $k[0] \simeq (C^*\otimes_k C)(P)$, so in particular, $D(P)$ is contractible. Therefore, by Theorem \ref{boucthmgeneralization}, $D$ is homotopy equivalent to a chain complex of $(s_p(G) \setminus \calX_V)$-projective modules, or equivalently, $V$-projective modules. Since $S \in\calX_V$, it follows by considering the mapping cone structure that $D$ has as a direct summand the chain complex $k \xrightarrow{\sim} k$, implying $C\otimes_k C^* \simeq k \oplus E$, for some bounded chain complex $E$ containing only $V$-projective modules.
    \end{proof}

    \begin{remark}
        \begin{enumerate}
            \item In fact, one only needs to check that $C(P)$ has nonzero homology concentrated in one degree, with that homology having $k$-dimension one at a \textit{minimal} subgroup $P \in \calX_V$, with order relation the inclusion of subgroups, to verify that $C$ is weakly $V$-endotrivial. This follows from the property (see \cite[Proposition 5.8.5]{L181}) that if $P \trianglelefteq Q$ are $p$-subgroups, then $C(P)(Q) \cong C(Q)$ as chain complexes of $k[N_{N_G(P)}(Q)]$-modules. Therefore, if $P, Q$ are $p$-subgroups of $G$ with $[Q:P] = p$ and $C(P)$ has nonzero homology concentrated in one degree with $k$-dimension one, then $C(Q)$ has this property as well.
            \item The prior theorem is analogous to a result of Rouquier, \cite[Theorem 5.6]{RR01}, which provides a local-to-global criteria for determining when a chain complex of $p$-permutation bimodules induces a splendid stable equivalence of block algebras.
        \end{enumerate}
    \end{remark}

    In fact, weakly $kG$-endotrivial complexes induce splendid stable equivalences. Denote by $\Delta G \leq G\times G$ the subgroup $\{(g,g) \in G\times G \mid g\in G\}$.

    \begin{theorem}\label{splendidstableequiv}
        Let $C$ be a weakly $kG$-endotrivial complex of $kG$-modules, which is identified as a complex of $k\Delta G$-modules. Then $\Ind^{G\times G}_{\Delta G} C$, regarded as a complex of $(kG, kG)$-bimodules, induces a splendid stable equivalence on $kG$.
    \end{theorem}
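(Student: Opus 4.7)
The plan is to verify directly that $X := \Ind_{\Delta G}^{G \times G} C$ satisfies the two defining properties of a splendid stable equivalence on $kG$: (i) each component $X_i$ is a $p$-permutation $(kG, kG)$-bimodule whose indecomposable summands have twisted diagonal vertices, and (ii) in the bounded homotopy category of $(kG, kG)$-bimodules, $X \otimes_{kG} X^* \simeq kG[0] \oplus P$ where $P$ is a bounded complex of projective bimodules (and symmetrically for $X^* \otimes_{kG} X$). Property (i) is straightforward: each $C_i$ is a $p$-permutation $kG$-module, hence a $p$-permutation $k\Delta G$-module whose indecomposable summands have vertices of the form $\Delta Q$ for $p$-subgroups $Q \leq G$, and induction from $\Delta G$ to $G \times G$ both preserves $p$-permutation modules and produces indecomposable summands whose vertices are $G \times G$-conjugates of $\Delta Q$, all of which are twisted diagonal.

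For (ii), the computational heart of the proof is the natural isomorphism of $(kG, kG)$-bimodules
\[\Ind_{\Delta G}^{G \times G}(M) \otimes_{kG} \Ind_{\Delta G}^{G \times G}(N) \cong \Ind_{\Delta G}^{G \times G}(M \otimes_k N),\]
valid for arbitrary $kG$-modules $M, N$ with diagonal action on $M \otimes_k N$. By Remark 2.2(c), this extends degreewise to bounded complexes. Combined with the standard duality $(\Ind_{\Delta G}^{G \times G} C)^* \cong \Ind_{\Delta G}^{G \times G}(C^*)$, which holds since induction agrees with coinduction for finite-index subgroups, I obtain
\[X \otimes_{kG} X^* \cong \Ind_{\Delta G}^{G \times G}(C \otimes_k C^*).\]
Since $C$ is weakly $kG$-endotrivial and $kG$-projectivity coincides with ordinary projectivity, Definition \ref{endotrivialdef} gives $C \otimes_k C^* \simeq k[0] \oplus D$ for a bounded complex $D$ of projective $kG$-modules. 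Applying the additive functor $\Ind_{\Delta G}^{G \times G}$, which preserves homotopy equivalence, and using $\Ind_{\Delta G}^{G \times G}(k) \cong kG$ as the regular $(kG, kG)$-bimodule, I get $X \otimes_{kG} X^* \simeq kG[0] \oplus \Ind_{\Delta G}^{G \times G}(D)$. The identification $\Ind_{\Delta G}^{G \times G}(kG) \cong k[G \times G]$ shows that $\Ind_{\Delta G}^{G \times G}$ sends projective $kG$-modules to projective $(kG, kG)$-bimodules, so $\Ind_{\Delta G}^{G \times G}(D)$ has the required form.

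The symmetric argument computes $X^* \otimes_{kG} X$, completing the verification. The one technical point warranting care is the natural isomorphism $\Ind_{\Delta G}^{G \times G}(M) \otimes_{kG} \Ind_{\Delta G}^{G \times G}(N) \cong \Ind_{\Delta G}^{G \times G}(M \otimes_k N)$: though essentially classical in the Mackey-type theory of bimodules over group algebras, it requires an explicit construction (e.g., on elementary tensors by $(g \otimes m) \otimes (h \otimes n) \mapsto gh \otimes (h^{-1} m \otimes n)$) followed by verification of well-definedness and compatibility with both the left and right $kG$-actions. This bookkeeping is routine but is the main nontrivial input beyond the endotriviality hypothesis itself.
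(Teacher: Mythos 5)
Your proof is correct and takes essentially the same approach as the paper intends: the paper's proof simply defers to \cite[Section~4]{SKM23}, where the non-relative version (endotrivial complexes give splendid Rickard complexes) is established using precisely the monoidal identity $\Ind_{\Delta G}^{G\times G}(M)\otimes_{kG}\Ind_{\Delta G}^{G\times G}(N)\cong\Ind_{\Delta G}^{G\times G}(M\otimes_k N)$ together with $\Ind_{\Delta G}^{G\times G}(k)\cong kG$, and your argument is the natural relative-projective adaptation (replacing the contractible error term by a complex of projective bimodules). The explicit bimodule map $(g\otimes m)\otimes(h\otimes n)\mapsto gh\otimes(h^{-1}m\otimes n)$ you describe is well-defined and an isomorphism with inverse $x\otimes(m\otimes n)\mapsto(x\otimes m)\otimes(1\otimes n)$, so no gap remains.
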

    \begin{proof}
        The proof follows in the same manner as the series of proofs given in \cite[Section 4]{SKM23} that endotrivial complexes induce splendid Rickard autoequivalences.
    \end{proof}

    \section{Endosplit $p$-permutation resolutions} \label{endosplitppermresolutions}

    In \cite[Section 7]{R96}, Rickard introduced the notion of an \textit{endosplit $p$-permutation resolution} (in which they are termed endosplit permutation resolutions). $V$-endosplit-trivial complexes are, after a shift of degree, examples of endosplit $p$-permutation resolutions, hence the naming convention. We will review some features of endosplit $p$-permutation resolutions, and give an equivalent condition for a chain complex to be an endosplit $p$-permutation resolution.

    \begin{definition}{\cite[Definition 7.1]{R96}}
        let $M$ be a $kG$-module. A chain complex of $p$-permutation modules $C$ is said to be an \textit{endosplit $p$-permutation resolution of $M$} if $H_0(C) \cong M$, $H_i(C) = 0$ for all $i \neq 0$, and $C^* \otimes_k C$ is split as a chain complex of $kG$-modules.
    \end{definition}

    \begin{remark}
        \begin{enumerate}
            \item In particular, the K\"unneth formula implies $C^* \otimes_k C \simeq (M^*\otimes_k M)[0],$ since a split chain complex is homotopy equivalent to the chain complex with its homology in each degree and zero differentials.

            Suppose $M$ is a $V$-endotrivial module, as defined in \cite{CL11}, and suppose $C$ is an endosplit $p$-permutation resolution for $M$. We have $C^* \otimes_k C \simeq (M^*\otimes_k M)[0]\cong (k \oplus P)[0]$ for some $V$-projective $kG$-module $P$, hence $C$ is a $V$-endosplit-trivial complex. Conversely, it is easy to see that any $V$-endosplit-trivial complex is, up to a shift in degrees, an endosplit $p$-permutation resolution, and it follows that $V$-endosplit-trivial complexes are equivalently shifted endosplit $p$-permutation resolutions for $V$-endotrivial modules. Moreover, if $M$ is one-dimensional, then $C$ is an endotrivial complex.

            \item Rickard proved in \cite{R96} that endosplit $p$-permutation resolutions exist for any endopermutation $kP$-module, where $P$ is an abelian $p$-group. Using the classification of endotrivial modules for $p$-groups, Mazza showed in \cite{Ma03} that if $p$ is odd, every endotrivial $kP$-module has an endosplit $p$-permutation resolution. On the other hand, some endotrivial modules such as the exotic endotrivial $kQ_8$-module does not have an endosplit $p$-permutation resolution.
        \end{enumerate}

    \end{remark}

    We have an equivalent characterization of endosplit $p$-permutation resolutions as follows.

    \begin{prop}\label{prop:equivendosplit}
        Let $C$ be a bounded chain complex of $p$-permutation $kG$-modules. The following are equivalent:
        \begin{enumerate}
            \item $C$ is a shifted endosplit $p$-permutation resolution.
            \item For every $P \in s_p(G)$, one of the following holds: either $C(P)$ is contractible or $C(P)$ has nonzero homology in exactly one degree. 
        \end{enumerate}
    \end{prop}
    \begin{proof}
        First, suppose $C$ is a shifted endosplit $p$-permutation resolution for some $kG$-module $M$. Fix $P \in s_p(G)$. $\End_k(C) \cong C^* \otimes_k C$ is split, and the K\"unneth formula implies $C^* \otimes_k C$ has nonzero homology concentrated in degree zero, with $H_0(C^* \otimes_k C) \cong M^* \otimes_k M$, therefore \[C^* \otimes_k C\simeq (M^* \otimes_k M)[0].\] Applying the Brauer quotient, we obtain \[C(P)^* \otimes_k C(P) \cong (C^* \otimes_k C)(P) \simeq (M^* \otimes_k M)(P)[0].\] If $(M^* \otimes_k M)(P) \neq 0$, it follows again by the K\"unneth formula that $C(P)$ has nonzero homology in exactly one degree. Otherwise, if $(M^* \otimes_k M)(P) = 0$, then $C(P)^*\otimes_k C(P)\cong \End_k(C(P))$ is contractible. Thus, $\End_{K^b({}_{k[N_G(P)/P]}\triv)}(C(P)) = 0$, and $C(P)$ is isomorphic to the zero complex in $K^b({}_{kG}\triv)$, as desired.

        Now, suppose condition (b) holds. Suppose for contradiction that $C^* \otimes_k C \not\simeq M[0]$, then we may write $C^* \otimes_k C \simeq D$ where $D$ is the unique direct summand of $D$ with no contractible summands. Since $C$ is bounded and $C^* \cong C$, there exists a unique positive natural number $i > 0$ for which $D_i \neq 0$ and $D_j = 0$ for all $j > i$. Since $C(P)$ is either contractible or has nonzero homology in exactly one degree, $C(P)^* \otimes_k C(P)$ has nonzero homology concentrated in degree 0, possibly also zero. The same holds for $D(P)$, as it is a direct summand of $(C^* \otimes_k C)(P) \cong  C(P)^* \otimes_k C(P).$ In particular, $d_i(P): D_i(P) \to D_{i-1}(P)$ is injective for all $P \in s_p(G)$, so $d_i: D_i \to D_{i-1}$ is split injective. Therefore, $D$ has a contractible direct summand, a contradiction. Thus, $C^* \otimes_k C \not\simeq M[0]$ for some $kG$-module $M$. However, the K\"unneth formula implies there exists a unique $i\in \Z$ for which $H_i(C)\neq 0$. We conclude $C$ is a shifted endosplit $p$-permutation resolution.
    \end{proof}

    Since $V$-endosplit-trivial complexes are endosplit $p$-permutation resolutions, we obtain the following equivalent characterizations.

    \begin{corollary}\label{cor:vesplittrivaltdefs}
        Let $V$ be an absolutely $p$-divisible $kG$-module, let $C \in Ch^b({}_{kG}\triv)$, and let $\calX_V$ be the set of $p$-subgroups $P$ of $G$ for which $V(P) = 0$. The following are equivalent:

        \begin{enumerate}
            \item $C$ is $V$-endosplit-trivial.
            \item $C(P)$ has nonzero homology concentrated in exactly one degree for all $P \in s_p(G)$, and that homology has $k$-dimension one when $P \in \calX_V$.
            \item $C(P)$ has nonzero homology concentrated in exactly one degree for all $P \in s_p(G)$, and for the unique $i\in \Z$ satisfying $H_i(C) \neq 0$, $H_i(C)$ is a $V$-endotrivial $kG$-module.
        \end{enumerate}

    \end{corollary}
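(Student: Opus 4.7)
The plan is to reduce the corollary to Theorem \ref{equivdefforvendosplit}(2) together with a K\"unneth-plus-trivial-source argument on Brauer constructions. Throughout I would shift $C$ in degree so that its unique nonzero homology (whose existence is established below) lies in degree $0$; none of (a), (b), (c) depends on degree, and in each case the existence of a unique $i$ with $H_i(C) \neq 0$ follows from applying K\"unneth to the Brauer construction at $P = 1$.

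For (a) $\Leftrightarrow$ (c), I would argue as follows. If (a) holds then $C^* \otimes_k C \simeq k[0] \oplus M[0]$ with $M$ $V$-projective, so $(C^* \otimes_k C)(P)$ has homology concentrated in degree $0$ for every $P \in s_p(G)$; K\"unneth applied to $C(P)^* \otimes_k C(P)$ forces $C(P)$ to have nonzero homology in exactly one degree, verifying the hypothesis of Theorem \ref{equivdefforvendosplit}(2). Its last sentence then yields (c), and conversely (c) $\Rightarrow$ (a) is immediate from the same sentence.

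For (b) $\Leftrightarrow$ (c), Theorem \ref{equivdefforvendosplit}(2) exhibits $C$ as an endosplit $p$-permutation resolution of $M := H_0(C)$, whence $C^* \otimes_k C \simeq E[0]$ for $E := \End_k(M)$. Splitting this homotopy equivalence in $Ch^b({}_{kG}\triv)$ displays $E$ as a direct summand of $(C^* \otimes_k C)_0$, so $E$ is $p$-permutation. Applying the Brauer construction and K\"unneth,
\begin{equation*}
E(P) \;\cong\; H_0\bigl((C^* \otimes_k C)(P)\bigr) \;\cong\; H_{j_P}(C(P))^* \otimes_k H_{j_P}(C(P)),
\end{equation*}
where $j_P$ is the unique degree of nonzero homology of $C(P)$. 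Direction (c) $\Rightarrow$ (b) is then immediate: if $M$ is $V$-endotrivial then $E \cong k \oplus N$ with $N$ $V$-projective, and Proposition \ref{omnibus}(e) gives $N(P) = 0$ for $P \in \calX_V$, forcing $\dim_k H_{j_P}(C(P)) = 1$.

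The main obstacle is the reverse direction (b) $\Rightarrow$ (c): promoting the pointwise identifications $E(P) \cong k$ for $P \in \calX_V$ to a global decomposition $E \cong k \oplus N$ with $N$ $V$-projective. I would decompose the $p$-permutation module $E$ into trivial source summands. Since $\calX_V \supseteq \Syl_p(G)$ by absolute $p$-divisibility of $V$, we have $E(S) \cong k$ for every Sylow $p$-subgroup $S$, and since $k[N_G(S)/S]$ is semisimple ($N_G(S)/S$ being a $p'$-group), the trivial source/Brauer correspondence forces the unique summand of $E$ with vertex $S$ to be the trivial module $k$; hence $E \cong k \oplus N$ with $N$ a sum of trivial source modules having non-Sylow vertices. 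From $k \oplus N(P) \cong E(P) \cong k$ we deduce $N(P) = 0$ for all $P \in \calX_V$, so every vertex of a summand of $N$ lies in $s_p(G) \setminus \calX_V$. Invoking Proposition \ref{replacewithppermformodules} and the ensuing identification ${}_{kG}\triv(V) = {}_{kG}\triv(s_p(G) \setminus \calX_V)$, we conclude $N$ is $V$-projective, so $M$ is $V$-endotrivial and (c) follows.
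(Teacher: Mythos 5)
Your proof is correct, but it takes a genuinely different route from the one the paper intends when it labels the corollary ``immediate.'' The paper's implicit derivation is: apply Theorem \ref{equivdefforvendosplit}(b) to obtain (a) $\Leftrightarrow$ (c) given the one-degree condition; then combine Theorem \ref{equivdefforvendosplit}(a) with Theorem \ref{equivdefinitionweakendotrivial} to identify condition (b) with ``(one-degree condition) $+$ $C$ weakly $V$-endotrivial,'' which by the ``moreover'' clause of \ref{equivdefforvendosplit}(a) is equivalent to (c). Your argument sidesteps Theorem \ref{equivdefinitionweakendotrivial} entirely. For (c) $\Rightarrow$ (b) you instead track the Brauer construction of $E = \End_k(H_0(C))$ through the K\"unneth formula, and for the harder direction (b) $\Rightarrow$ (c) you decompose the $p$-permutation module $E$ into trivial source summands, isolate the Sylow-vertex summand via $E(S) \cong k$ and the Brauer correspondence, and use the vanishing $N(P) = 0$ for $P \in \calX_V$ to locate the remaining vertices in $s_p(G) \setminus \calX_V$. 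This is a more self-contained, hands-on argument that in effect reproves the relevant half of Theorem \ref{equivdefinitionweakendotrivial} without the tensor-hom adjunction machinery of its proof; what it costs you is redoing work the paper already did, and what it buys you is a transparent picture of the trivial source decomposition of $\End_k(H_0(C))$, which makes the $V$-endotriviality of $H_0(C)$ visible summand by summand. One shared subtlety worth flagging: both your argument and the paper's intended one use $\calX_V \supseteq \Syl_p(G)$ and the identification ${}_{kG}\triv(V) = {}_{kG}\triv(s_p(G) \setminus \calX_V)$; these are justified when $V$ is $p$-permutation (as the paper assumes implicitly after Proposition \ref{replacewithppermformodules}), but when $V$ is not $p$-permutation one must first check that passing to a $p$-permutation generator of ${}_{kG}\triv(V)$ does not change the set $\calX_V$. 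You invoke Proposition \ref{replacewithppermformodules} at the right moment but, like the paper, do not spell out this compatibility.
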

    \begin{proof}
        (a) and (c) are equivalent by Proposition \ref{prop:equivendosplit} and the fact that $V$-endosplit-trivial complexes are equivalently shifted endosplit $p$-permutation resolutions of relatively $V$-endotrivial modules. (c) implies (b) is a straightforward verification. If (b) holds, then $C$ is an endosplit $p$-permutation resolution by Proposition \ref{prop:equivendosplit} and weakly $V$-endotrivial by Theorem \ref{equivdefinitionweakendotrivial}, and it follows that $C$ is $V$-endosplit-trivial.
    \end{proof}

    \begin{remark}
        As before, it follows by commutativity with tensor products that applying restriction, inflation, or the Brauer construction to an endosplit $p$-permutation resolution will result in an endosplit $p$-permutation resolution. We will see in the sequel that if $H \leq G$, applying additive induction to an endosplit $p$-permutation resolutions which satisfies a stability condition results in an endosplit $p$-permutation resolution. However, this will not in general preserve $V$-endosplit-triviality.
    \end{remark}

    The following result of Rickard in \cite{R96} will be of use in the sequel. We state a version provided in \cite{L182}.

    \begin{theorem}{\cite[Theorem 7.11.2]{L182}} \label{endosplitsummands}
        Let $M$ be a finitely generated $kG$-module having an endosplit $p$-permutation resolution $C$. For any subgroup $H \leq G$, the complex $\Hom_{kG}(C,C)$ has homology concentrated in degree zero, isomorphic to $\Hom_{kH}(M,M)$, and there is an algebra isomorphism \[\rho_H: \Hom_{K^b({}_{kH}\catmod)}(C,C)\cong \Hom_{kH}(M,M)\] satisfying $\Res^G_H \circ\rho_G = \rho_H \circ \Res^G_H$ and $\tr^G_H \circ \rho_H = \rho_G \circ \tr^G_H$. In particular, $\rho_G$ induces a vertex and multiplicity preserving bijection between the sets of isomorphism classes of indecomposable direct summands of the module $M$ and of noncontractible indecomposable direct summands of the complex $C$.
    \end{theorem}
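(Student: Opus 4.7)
The plan is to exploit the endosplit hypothesis to transfer everything through homology. By definition $C^* \otimes_k C$ is split as a chain complex of $kG$-modules, hence decomposes as a direct sum of shifts of its homology together with contractible $kG$-complexes. Since $C$ is a resolution of $M$ and $k$ is a field, the K\"unneth formula forces this homology to be concentrated in degree zero and equal to $M^* \otimes_k M$. Using the canonical isomorphism of $kG$-complexes $\Hom_k(C,C) \cong C^* \otimes_k C$, I obtain a split decomposition $\Hom_k(C,C) \cong \Hom_k(M,M)[0] \oplus K$, where $K$ is a contractible $kG$-complex.

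For any subgroup $H \leq G$, I would apply the degreewise $H$-fixed-point functor. Since the decomposition above is split as a sequence of $kG$-modules in each degree (hence also as a sequence of $kH$-modules), $(-)^H$ preserves it, giving $\Hom_{kH}(C,C) \cong \Hom_{kH}(M,M)[0] \oplus K^H$ with $K^H$ still contractible. Taking $H_0$ and composing with the standard identification $\Hom_{K^b({}_{kH}\catmod)}(C,C) \cong H_0(\Hom_{kH}(C,C))$ defines $\rho_H$. A direct check confirms that it respects composition, hence is an algebra isomorphism. The compatibilities with $\Res^G_H$ and $\tr^G_H$ are then formal consequences of the fact that the decomposition of $\Hom_k(C,C)$ is an intrinsic, $kG$-equivariant splitting: restriction merely forgets part of the action, and $\tr^G_H = \sum_{g \in [G/H]} {}^g(-)$ commutes with any $kG$-equivariant map.

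For the last statement, decompositions of $M$ into indecomposable $kG$-summands correspond bijectively to decompositions of $1 \in \End_{kG}(M)$ into primitive orthogonal idempotents, and likewise noncontractible indecomposable summands of $C$ in $K^b({}_{kG}\catmod)$ correspond to primitive idempotents in $\End_{K^b({}_{kG}\catmod)}(C)$ (the contractible summands correspond to the zero idempotent). Because $\rho_G$ is an algebra isomorphism, it transports primitive orthogonal idempotent decompositions and matches multiplicities. The main obstacle is vertex preservation: an indecomposable trivial-source $kG$-summand $X$ of $M$ has vertex $P$ if and only if $\id_X \in \tr^G_P(\End_{kP}(X))$ by Higman's criterion, and the analogous Higman criterion holds in $K^b({}_{kG}\catmod)$ for the corresponding chain-complex summand. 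Since $\rho_H$ commutes with $\tr^G_P$ for every $P \leq G$, the minimal subgroup $P$ such that $\id$ of the associated idempotent lies in the image of $\tr^G_P$ agrees on the two sides, which delivers the vertex-preserving bijection.
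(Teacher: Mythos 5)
The paper does not give its own proof of this statement; it cites Linckelmann's book \cite[7.11.2]{L182}. Your argument reproduces the standard proof found there and is correct: the endosplit hypothesis gives a direct-sum decomposition of $kG$-chain complexes $\Hom_k(C,C)\cong\Hom_k(M,M)[0]\oplus K$ with $K$ contractible, the $H$-fixed-point functor (being additive) carries this to $\Hom_{kH}(C,C)\cong\Hom_{kH}(M,M)[0]\oplus K^H$ with $K^H$ still contractible since the contracting homotopy is $kG$-linear, and taking $H_0$ gives $\rho_H$.

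Two small slips worth correcting. First, in the last paragraph you describe the indecomposable summands of $M$ as ``trivial-source''; they need not be, and in fact will generally not be (here $M$ is only assumed endo-$p$-permutation). Higman's criterion applies without that hypothesis, so the vertex-transport argument is unaffected, but the phrase should be deleted. Second, when you justify that $(-)^H$ preserves the splitting, what is actually doing the work is that the decomposition is a direct sum of $kG$-chain complexes (so any additive functor preserves it), not merely that each degree splits as $kG$-modules; degreewise splitness alone would not control the differentials. You may also note that the displayed statement in the paper contains a typo: ``$\Hom_{kG}(C,C)$'' should read ``$\Hom_{kH}(C,C)$'' to match the asserted homology $\Hom_{kH}(M,M)$.
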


    As a partial converse, we can determine when the direct sum of two endosplit $p$-permutation resolutions is an endosplit $p$-permutation resolution.

    \begin{theorem}\label{sumsofendosplitpperms}
        Let $C, D$ be two endosplit $p$-permutation resolutions. $C \oplus D$ is an endosplit $p$-permutation resolution if and only if for all $P \in s_p(G)$, one of the two cases holds: both $C(P)$ and $D(P)$ are contractible or $C(P)$ and $D(P)$ have nonzero homology concentrated in the same degree.
    \end{theorem}
    \begin{proof}
        First, suppose that for all $P\in s_p(G)$, either both $C(P)$ and $D(P)$ are contractible or $C(P)$ and $D(P)$ have nonzero homology concentrated in the same exact degree. Then the same holds for $(C\oplus D)(P)$, so Proposition \ref{prop:equivendosplit} implies $C \oplus D$ is an endosplit $p$-permutation resolution, as desired. Now suppose $C \oplus D$ is an endosplit $p$-permutation resolution. Then $(C \oplus D)(P) \cong C(P) \oplus D(P)$ is either contractible or has nonzero homology in exactly one degree for any $P \in s_p(G)$. Therefore, both $C(P)$ and $D(P)$ are both contractible or have nonzero homology in the same unique degree as well. The result follows.
    \end{proof}

    \begin{remark}
        If the hypotheses in the previous theorem occur, then the endosplit $p$-permutation resolution $C \otimes_k D^*$ has h-marks entirely 0. By a similar argument as in Proposition \ref{prop:equivendosplit}, $C\otimes_k D^* \simeq (M\otimes_k N^*)[0]$, where $M$ and $N$ are the $kG$-modules which $C$ and $D$ respectively resolve. This implies $M\otimes_k N^*$ is a $p$-permutation module, so the above situation can only occur if $M\otimes_k N^*$ is $p$-permutation. If $G$ is a $p$-group and $C,D$ are both $V$-endosplit-trivial complexes, this implies $M \cong N$ modulo $V$-projectives, however, for an arbitrary finite group $G$, this isomorphism does not hold.

        In particular, setting $N := M$ and $D := C$ implies that the only $kG$-modules which can possibly have endosplit $p$-permutation resolutions are \textit{endo-$p$-permutation} $kG$-modules, that is, $kG$-modules $M$ for which $M^* \otimes_k M \cong k \oplus P$, where $P$ is a $p$-permutation module. Endo-$p$-permutation modules were first studied in detail by Urfer in \cite{Ur06}.
    \end{remark}

    \section{h-mark homomorphisms}

    In \cite{SKM23}, we observed that much of the structure of an endotrivial complex could be determined by local homological data, which we coined ``h-marks.'' We can apply the same techniques in the relative endotriviality setting. In this section, we construct h-mark homomorphisms which determine much of the structure of a relatively endotrivial complex, particularly for weakly $V$-endotrivial complexes and $V$-endosplit-trivial complexes. We continue to assume $V$ is an absolutely $p$-divisible $kG$-module, and set $\calX_V \subseteq s_p(G)$ to be the subset of all $p$-subgroups for which $V(P) = 0$.

    \textbf{Notation:} We denote the set of $\Z$-valued superclass functions on $G$, that is, maps from the set of subgroups of $G$ to $\Z$ constant on conjugacy classes, by $CF(G)$. If $\calX$ is a subset of the set of subgroups of $G$ which is closed under $G$-conjugation, then $CF(G, \calX)$ denotes the group of \textit{$\Z$-valued superclass functions on $\calX$}, i.e. the group of functions $f: \calX \to \Z$ which are constant on conjugacy classes. We set $CF(G, p) := CF(G, s_p(G))$.

    \subsection{h-mark homomorphisms for weakly $V$-endotrivial complexes}

    The definitions in this section arise from the fact that if $C$ is a weakly $V$-endotrivial complex and $P \in \calX_V$, then $C(P)$ is an endotrivial complex since $V(P) = 0$ (see Definition \ref{def:reletrivdef}).

    \begin{definition}
        Let $V$ be an absolutely $p$-divisible $kG$-module. Recall that for any $P \in \calX_V$, $C(P)$ is an endotrivial complex.

        For a class of weakly $V$-endotrivial complexes $[C] \in w\calE_k^V(G)$ and $p$-subgroup $P$ for which $V(P) = 0$, denote by $h_C(P)$ the degree $i\in \Z$ of a representative $C$ in which $H_i(C(P)) \neq 0$, and if we have $H_{h_C(P)}(C(P)) \cong k_\omega$, set $\calH_C(P):= \omega \in \Hom(N_G(P)/P,k^\times)$. This is well-defined since $C(P)$ is an endotrivial complex. We call the values of $h_C(P)$ the \textit{h-marks of $C$ at $P$}, and abusively refer to $\calH_C(P)$ as \textit{the homology of $C$ at $P$.}

        We have a map:
        \begin{align*}
            \Xi: w\calE^V_k(G) &\to \prod_{P \in \calX_V} \Z \times \Hom(N_G(P)/P, k^\times)\\
            [C] &\mapsto \big(h_C(P), \calH_C(P)\big)_{P \in s_p(G)}
        \end{align*}

        It is straightforward to verify that this map is a well-defined group homomorphism via the K{\"u}nneth formula and commutativity of the Brauer construction with tensor products for $p$-permutation modules.

        For any weakly $V$-endotrivial complex $C$, we may regard $h_C$ as a $\Z$-valued superclass function on $\calX_V$. Indeed, the Brauer construction satisfies $C({}^gP) = {}^gC(P)$ for any $g \in G$ and $P \in s_p(G)$. This gives another group homomorphism, which we call the \textit{h-mark homomorphism,}

        \begin{align*}
            h_w: w\calE^V_k(G) &\to CF(G,\calX_V)\\
            [C] &\mapsto h_C
        \end{align*}

        One may easily verify that this is well-defined, that is, if $[C_1] = [C_2] \in w\calE_k^V(G)$, the $h_{C_1} = h_{C_2} \in  CF(G,\calX_V)$.
    \end{definition}

    \begin{lemma}{\cite[Lemma 5.8]{SKM23}}\label{relprojectivitylimitingthm}
        Let $C$ be a bounded complex of $p$-permutation $kG$-modules with the following property: there exists a set $\calY \subset s_p(G)$ which does not contain Sylow subgroups, is closed under conjugation and taking subgroups, and that satisfies that there exists some $i \in \Z$ such that for all $P \not\in \calY$, $\dim_k H_i(C(P)) = 1$ and $C(P)$ is exact in all other degrees. Then $C$ is homotopy equivalent to an complex $C'$ with the following property: there exists an $i \in \Z$ such that $C'_i = M \oplus N$, with $M$ $\calY$-projective, possibly 0, and a vertex of $N$ is a Sylow $p$-subgroup, and for all $j \neq i$, either $C_j' = 0$ or $C_j'$ is $\calY$-projective.
    \end{lemma}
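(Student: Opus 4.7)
The plan is to prove this lemma by induction on the length of $C$, using the splitting criterion for $p$-permutation morphisms to iteratively replace the outer terms of $C$ with $\calY$-projective ones via homotopy equivalences. After shifting, I assume without loss of generality that $i = 0$, and write $\calZ := s_p(G) \setminus \calY$, which is up-closed and contains all Sylow $p$-subgroups.

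For the inductive step, consider the top degree $n > 0$ at which $C$ is nonzero, and decompose $C_n = A \oplus B$ via Krull--Schmidt, where $A$ collects the indecomposable summands with vertex in $\calY$ (hence $A$ is $\calY$-projective) and $B$ collects the summands with vertex in $\calZ$. Since $\calY$ is closed under taking subgroups, $A(P) = 0$ for all $P \in \calZ$, and the hypothesis $H_n(C(P)) = 0$ translates into injectivity of $d_n(P)|_{B(P)}$ for each $P \in \calZ$. I would then upgrade this to a split monomorphism $d_n|_B \colon B \to C_{n-1}$ via the splitting criterion \cite[Theorem 5.8.11]{L181}, which requires injectivity on Brauer constructions at every $p$-subgroup. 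Granting this, one peels off the contractible subcomplex $B \xrightarrow{\sim} d_n(B)$ from $C$, leaving the $\calY$-projective piece $A$ in degree $n$. A symmetric argument at the bottom degree using split epimorphisms handles negative degrees. Iterating yields a complex homotopy equivalent to $C$ whose only non-$\calY$-projective terms lie in degree $0$.

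To conclude, analyze the degree-$0$ term. Using Krull--Schmidt, decompose $C_0 = N \oplus M$, where $N$ is the sum of the Sylow-vertex summands and $M$ collects the remaining summands. The condition $\dim_k C_0(S) = 1$ for Sylow $S$ pinpoints $N$ as a single indecomposable $p$-permutation module whose Brauer construction at $S$ is one-dimensional, forcing $N \cong k_\omega$ for some $\omega \in \Hom(N_G(S), k^\times)$ of $p'$-order (using $G$-equivariance of the Brauer construction to rule out summands with vertex a non-$S$ conjugate of $S$). Since $k_\omega(P) = k_\omega$ is one-dimensional at every subgroup $P \leq S$, the relation $\dim N(P) + \dim M(P) = 1$ for $P \in \calZ$ forces $\dim M(P) = 0$ for all $P \in \calZ$, so $M$ is $\calY$-projective; this produces the required decomposition $C'_0 = M \oplus N$.

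The main obstacle is justifying the split-injectivity of $d_n|_B$ at subgroups $P \in \calY$, where the hypothesis provides no direct control on $d_n(P)|_{B(P)}$. Resolving this will likely require a finer Krull--Schmidt decomposition of $C_n$ (grouping summands by vertex within $\calZ$ so that the contribution at each $P \in \calY$ can be controlled via restriction to a Sylow overgroup), or an auxiliary application of Theorem \ref{boucthmgeneralization} applied to the naive truncation $C_{>0}$, absorbing the residual obstruction into the $\calY$-projective part before peeling off the contractible subcomplex.
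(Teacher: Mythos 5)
Your peeling strategy is essentially the right approach (and matches the spirit of the paper's proof, which is delegated to \cite[Lemma 5.8]{SKM23}), and you are right to flag the split-injectivity step as the crux. The gap you identify is genuine \emph{as you have set it up}, but it is fillable by a fact you do not invoke: if $B$ is an \emph{indecomposable} $p$-permutation $kG$-module with vertex $Q$ and $f\colon B \to N$ is a morphism of $p$-permutation modules, then $f$ is a split monomorphism as soon as $f(Q)$ alone is injective; one does not need to check $f(P)$ at every $p$-subgroup $P$. The point is that $B(Q)$ is the projective indecomposable $k[N_G(Q)/Q]$-module in Brauer correspondence with $B$, so $f(Q)$ injective forces $f(Q)$ split (projective $=$ injective), and a Krull--Schmidt argument on $N(Q) = \bigoplus_j N_j(Q)$ together with the local ring $\End(B(Q))$ produces an indecomposable summand $N_{j_0}$ of $N$ with a projective summand in $N_{j_0}(Q)$; this forces $N_{j_0}$ to have vertex $Q$, then $N_{j_0}(Q) \cong B(Q)$, then $N_{j_0} \cong B$, and finally $f_{j_0}(Q)$ invertible gives $f_{j_0}$ invertible because the Brauer map $\End_{kG}(B) \to \End_{k[N_G(Q)/Q]}(B(Q))$ sends non-units to non-units. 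Iterating over the summands of $B$ (with Gaussian elimination at each step) shows that if all vertices of $B$ lie in $\calZ$ and $d_n(Q)|_{B(Q)}$ is injective for every $Q \in \calZ$, then $d_n|_B$ is split injective. That is exactly the input your argument is missing: you do not need injectivity of $(d_n|_B)(P)$ at $P \in \calY$, only at the vertices of $B$, all of which lie in $\calZ$ where the hypothesis applies. Neither of your two proposed repairs (the naive truncation $C_{>0}$ is not acyclic on $\calZ$, so Theorem \ref{boucthmgeneralization} does not apply to it) would work as stated; the repair above is what you want.

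Separately, your conclusion that $N \cong k_\omega$ in the degree-$i$ analysis is too strong and reflects a conflation of $C'_0(S)$ with $H_0(C'(S))$ (they do coincide after peeling, but the deduction from there is wrong): a trivial source $kG$-module with vertex $S$ and one-dimensional Brauer quotient at $S$ is the Green correspondent of a one-dimensional $k[N_G(S)/S]$-module, which need not itself be one-dimensional as a $kG$-module, and in any case $k_\omega$ with $\omega \in \Hom(N_G(S),k^\times)$ is not even a $kG$-module. The argument you actually need is weaker and cleaner: for $P \in \calZ$ one has $M(P) = 0$ automatically since $M$ is $\calY$-projective, so $\dim_k N(P) = 1$ for all $P \in \calZ$; a non-Sylow $\calZ$-vertex summand of $N$ would then force $\dim_k N(P) \ge 2$ at that vertex (one contribution from itself, one from the necessarily present Sylow-vertex summand), so $N$ must be a single indecomposable with Sylow vertex, which is exactly the conclusion the lemma asks for.
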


    \begin{theorem}\label{thm:kerofweakhmk}
        We have $\ker (h_w) \cong T_{k,V}(G,S).$ In particular, $\ker (h_w)$ is the torsion subgroup of $w\calE_k^V(G)$, and $w\calE_k^V(G)$ is a finitely generated abelian group.
    \end{theorem}
    \begin{proof}
        We have $T_{k,V}(G,S) \leq \ker (h_w)$, so it suffices to show the converse inclusion. The elements of $\ker (h_w)$ are (isomorphism classes of) $V$-endotrivial complexes $C$ for which, given any $P \in \calX_V$, $\dim_k H_0(C(P)) = 1$ and $C(P)$ is exact in all other degrees. Any such complex $C$ satisfies the hypothesis of the previous lemma, where we set $\calY = s_p(G) \setminus \calX_V$ in the notation of the previous lemma.

        For weakly $V$-endotrivial complexes, $V$-projectivity is the same as $\calY$-projectivity, by Proposition \ref{replacewithppermformodules} and \cite[Proposition 5.10.3]{L181}. Thus, $C_0$ has an indecomposable summand with vertex $S$, and all other indecomposable modules appearing in each degree of $C$ are $V$-projective. Therefore, $C \cong M[0]$ in $K^b({}_{kG}\sttriv_V)$ for some indecomposable trivial source $V$-endotrivial $kG$-module $M$ with vertex $S$, thus $\ker(h_W)\subseteq T_{k,V}(G,S)$ as desired.

        It follows that $\ker(h_w)$ is the torsion subgroup of $w\calE_k^V(G)$, since $T_{k,V}(G,S)$ is finite (as there is a finite number of isomorphism classes trivial source $kG$-modules), hence torsion, and any complex with a nonzero h-mark cannot be torsion, as successive powers of the complex will increase the absolute value of the nonzero h-mark. Additionally, $w\calE_k^V(G)$ must be finitely generated abelian, since the $\Z$-rank of $w\calE_k^V(G)$ is bounded below by the $\Z$-rank of $CF(G,\calX_V)$, which is equal to the number of conjugacy classes of $\calX_V$.
    \end{proof}

    \begin{remark}
        In particular, since $\ker (h_w)$ is the torsion subgroup of $w\calE^V_k(G)$, we have a split exact sequence of abelian groups \[0 \to T_{k,V}(G,S) \to w\calE_k^V(G) \to \im ( h_w) \to 0,\] since $\im(h_w)$ is a free abelian group. Unlike the situation for endotrivial complexes given in \cite[Remark 3.8]{SKM23}, this sequence is not canonically split.
    \end{remark}

    \begin{corollary}
        Two weakly $V$-endotrivial complexes are isomorphic in $K^b({}_{kG}\sttriv_V)$ if and only if they have the same h-marks and local homology at all $P \in \calX_V$.
    \end{corollary}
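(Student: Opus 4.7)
The plan is as follows. The forward direction is a straightforward functoriality argument: if $C_1 \simeq C_2$ in $K^b({}_{kG}\underline\triv_V)$, then for each $P \in \calX_V$ (for which $V(P) = 0$), the Brauer construction yields a homotopy equivalence $C_1(P) \simeq C_2(P)$ in $K^b({}_{k[N_G(P)/P]}\triv)$, so they have identical degree of nonzero homology and identical one-dimensional homology module at that degree. Taking the h-mark and $H$-value at $P$ gives the result.

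For the reverse direction, the natural move is to pass to $\calW\calE_k^V(G)$ and consider the class $[C_1 \otimes_k C_2^*]$. By the multiplicativity of the Brauer construction on $p$-permutation modules and the K\"unneth formula, $\Xi$ is a group homomorphism, so the hypothesis that $h_{C_1} = h_{C_2}$ and $H_{C_1}(P) = H_{C_2}(P)$ for all $P \in \calX_V$ forces $[C_1 \otimes_k C_2^*]$ to have vanishing h-mark and trivial local homology at every $P \in \calX_V$. In particular $[C_1 \otimes_k C_2^*] \in \ker h_\calW$. By the previous corollary, $\ker h_\calW \cong T_V(G,S)$, so there is a trivial source $V$-endotrivial $kG$-module $M$ with $[C_1 \otimes_k C_2^*] = [M[0]]$ in $\calW\calE_k^V(G)$, and we may take $M$ indecomposable, so by Proposition \ref{vtxset} it has vertex a Sylow $p$-subgroup $S$.

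The main step, which will do most of the work, is showing $M \cong k$. Applying the Brauer construction at $S \in \Syl_p(G) \subseteq \calX_V$, the hypothesis on local homology gives $M(S) \cong H_{M[0]}(S) \cong k$ as a trivial $k[N_G(S)/S]$-module. Because $M$ is an indecomposable trivial source $kG$-module with vertex $S$, the classical bijection between indecomposable trivial source modules of $G$ with vertex $S$ and indecomposable projective $k[N_G(S)/S]$-modules (equivalently, simple $k[N_G(S)/S]$-modules since $|N_G(S)/S|$ is coprime to $p$), given by $M \mapsto M(S)$, implies $M \cong k$. Hence $[C_1 \otimes_k C_2^*] = [k[0]]$, i.e.\ $[C_1] = [C_2]$ in $\calW\calE_k^V(G)$; equivalently $C_1 \simeq C_2$ in $K^b({}_{kG}\underline\triv_V)$.

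The main obstacle is the last identification $M \cong k$ from the local data at $S$ alone. I expect this to go through cleanly via the trivial-source/Brauer correspondence, but one must be careful that the Brauer construction identifies $M(S)$ with the trivial $k[N_G(S)/S]$-module rather than some twisted one-dimensional module $k_\omega$: that is precisely what the hypothesis $H_{C_1}(P) = H_{C_2}(P)$ (taken at $P=S$) encodes. Everything else (passing to inverses, compatibility of $\otimes_k$ with the Brauer construction, and the identification $\ker h_\calW \cong T_V(G,S)$) is already available from the preceding results.
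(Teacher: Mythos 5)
Your proof is correct and follows essentially the same strategy as the paper: reduce the statement to showing that the two complexes represent the same class in $\calW\calE_k^V(G)$, using the homomorphism $\Xi$ and the identification $\ker h_\calW \cong T_V(G,S)$ from the preceding corollary. The paper's proof is a one-line assertion that the two complexes ``must represent the same class in $\calW\calE_k^V(G)$''; you supply the missing content, namely that an element $[M[0]]$ of $\ker h_\calW$ with trivial local homology $H_{M}(S) \cong k$ forces $M \cong k$ via the trivial-source/Brauer bijection at a Sylow subgroup. That step is exactly what makes $\Xi$ injective, and it is the nontrivial part of the argument. One small citation slip: Proposition~\ref{vtxset} concerns $V$-endotrivial complexes; the fact that an indecomposable $V$-endotrivial module has Sylow vertex is the module-theoretic statement in Section~5 (item (d) of the omnibus proposition on $V$-endotrivial modules). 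Otherwise everything is in order.
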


    \begin{proof}
        This follows since given two weakly $V$-endotrivial complexes which satisfy the above, they must represent the same class in $w\calE_k^V(G)$, whence the result.
    \end{proof}

    \subsection{h-marks for endosplit $p$-permutation resolutions and $V$-endosplit-trivial complexes}

    Recall that a complex of $p$-permutation $kG$-modules is a shifted endosplit $p$-permutation resolution if and only if for all $P \in s_p(G)$, $C(P)$ either has nonzero homology in exactly one degree or is contractible.

    \begin{definition}
        Let $C$ be either a $V$-endosplit-trivial complex or an endosplit $p$-permutation resolution, and let $P \in s_p(G)$. Denote by $h_C(P)$ the unique degree $i \in \Z$ for which $H_i(C(P)) \neq 0$.

        If $C$ is a $V$-endosplit-trivial complex, set $\calH_C(P) = [H_{h_C(P)}(C(P))] \in T_{k,V}(N_G(P)/P)$. We claim this is invariant among equivalent complexes in $e\calE_k^V(G)$. Indeed, suppose $[C_1] = [C_2] \in e\calE_k^V(G)$. Then $C_1$ and $C_2$ have isomorphic indecomposable $V$-endosplit-trivial direct summands, which we denote $C_0$. Therefore we may write $C_1 \cong C_0 \oplus P_1$ and $C_2 \cong C_0 \oplus P_2$, with $P_1, P_2$ homotopy equivalent to $V$-projective complexes. Therefore, for all $i \in \Z$, $H_i(P_1)$ and $H_i(P_2)$ are $V$-projective $kG$-modules. Moreover, for any $P \in s_p(G)$, $h_{C_1}(P) = h_{C_0}(P) = h_{C_2}(P)$. Thus, for any $P \in s_p(G)$, we have

        \begin{align*}
            \calH_{C_1}(P) &= [H_{h_{C_1}(P)}(C_0(P)) \oplus H_{h_{C_1}(P)}(P_1(P))] \\
            &= [H_{h_{C_0}(P)}(C_0(P))]  \\
            &= [H_{h_{C_2}(P)}(C_0(P)) \oplus H_{h_{C_2}(P)}(P_2(P))] \\
            &= \calH_{C_2}(P) \in T_{k,V}(N_G(P)/P).
        \end{align*}

        As before, we call the values of $h_C(P)$ the \textit{h-marks of $C$ at $P$}, and abusively refer to $\calH_C(P)$ as \textit{the homology of $C$ at $P$}. Note that here, we are taking the image of homology in $T_{k,V}(G)$ rather than the isomorphic image. We obtain a group homomorphism:

        \begin{align*}
            \Xi: e\calE_k^V(G) &\to \prod_{P \in s_p(G)} \Z \times T_{k, V(P)}(N_G(P)/P)\\
            [C] &\mapsto (h_C(P), \calH_C(P))_{P \in s_p(G)}.
        \end{align*}

        We identify $T_{k,\{0\}}(G)$ with $\Hom(G,k^\times)$, the group of $k$-dimension one $kG$-modules with addition given by $\otimes_k$. We regard $h_C$ as a $\Z$-valued superclass function on $s_p(G)$, as before. This gives another group homomorphism, which we call the \textit{h-mark homomorphism},

        \begin{align*}
            h_e: e\calE_k^V(G) &\to CF(G,p)\\
            [C] &\mapsto h_C
        \end{align*}

        Note that the codomain of $h_e$ is $CF(G,p)$, whereas the codomain of $h_W$ is $CF(G,\calX_V)$.

    \end{definition}

    \begin{theorem}
        We have $\ker (h_e) \cong T_{k,V}(G,S)$. In particular, $\ker (h_e)$ is the torsion subgroup of $e\calE_k^V(G)$, and $e\calE_k^V(G)$ is finitely generated.
    \end{theorem}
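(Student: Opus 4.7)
The plan is to construct an explicit isomorphism $\phi: T_V(G,S) \to \ker h_e$ defined by $[M] \mapsto [M[0]]$. For well-definedness, a trivial source $V$-endotrivial module $M$ is $p$-permutation, so $M[0] \in Ch^b({}_{kG}\triv)$ and $(M[0])^*\otimes_k M[0] \simeq (M^*\otimes_k M)[0] \cong k[0] \oplus W[0]$ for a $V$-projective $kG$-module $W$, making $M[0]$ $V$-endosplit-trivial; since $M[0](P) = M(P)[0]$ has homology in degree zero for every $P \in s_p(G)$, all h-marks of $M[0]$ vanish. The cap of $M[0]$ is $M_0[0]$, where $M_0$ is the indecomposable $V$-endotrivial summand of $M$, so $\phi$ is a well-defined group homomorphism. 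It is injective: if $\phi([M_1]) = \phi([M_2])$, the caps $M_{1,0}[0]$ and $M_{2,0}[0]$ agree in $Ch^b({}_{kG}\triv)$, whence $M_{1,0} \cong M_{2,0}$ and $[M_1] = [M_2]$ in $T_V(G,S)$.

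For surjectivity, fix $[C] \in \ker h_e$. By Proposition~\ref{endotrivialsummand} I replace $C$ by its cap $[C] = [C_0]$, with $C_0$ indecomposable and $V$-endosplit-trivial; by Proposition~\ref{vtxset}, $C_0$ has some Sylow $p$-subgroup $S$ as vertex. Let $D$ be an indecomposable $kS$-source, so that $C_0 \mid \Ind^G_S D$ and $D$ necessarily has vertex $S$ in $kS$ (otherwise so would $C_0$). Using the Mackey formula for chain complexes together with the observation that in the decomposition $\Res^G_S \Ind^G_S D^* \cong D^* \oplus (\text{properly-subgroup-induced terms})$, the properly-induced terms carry non-Sylow vertices and are hence $\Res^G_S V$-projective (as $V$ is absolutely $p$-divisible), I would deduce from $C_0^*\otimes_k C_0 \simeq k[0] \oplus V\text{-projective}[0]$ that $D^*\otimes_k D \simeq k[0] \oplus (\Res^G_S V)\text{-projective}[0]$; i.e.\ $D$ is $\Res^G_S V$-endosplit-trivial. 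A parallel Mackey-type decomposition for the Brauer construction of $\Ind^G_S D$ at $p$-subgroups of $S$---the Mackey formula for Brauer construction and additive induction announced in the introduction and to be established later in the paper---then transfers the vanishing of h-marks of $C_0$ to $D$, placing $[D]$ in $\ker h_e$ inside $e\calE_k^{\Res^G_S V}(S)$.

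Since $S \trianglelefteq S$ is a $p$-group and $\Hom(S,k^\times) = 1$ (the group $k^\times$ has no nontrivial $p$-power-order elements in characteristic $p$), Proposition~\ref{trivialsourceendotrivialmodulesD1}(b) yields $T_{\Res^G_S V}(S,S) = \{[k]\}$. Applying Lemma~\ref{relprojectivitylimitingthm} to $D$ produces $D \simeq D'$ with $D'_0 = M' \oplus k^n$ ($M'$ being $\Res^G_S V$-projective) and $D'_j$ $\Res^G_S V$-projective for $j \neq 0$; the h-mark condition $\dim_k H_0(D(S)) = 1$ forces $n = 1$. Since $D$ and $k[0]$ are indecomposable representatives of the same class in $\underline{K}^b({}_{kS}\triv)_{\Res^G_S V}$ with no $\Res^G_S V$-projective or contractible summands, Proposition~\ref{isoclassesofstablehomotopy} implies $D \cong k[0]$ in $Ch^b({}_{kS}\triv)$. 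Therefore $C_0 \mid \Ind^G_S k[0] = k[G/S][0]$, which forces $C_0 \cong M_0[0]$ for an indecomposable trivial source summand $M_0 \mid k[G/S]$ with vertex $S$, so $[M_0] \in T_V(G,S)$ and $[C] = \phi([M_0])$.

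The hard part will be the rigorous verification of the two Mackey-type decompositions above---the one used to propagate $V$-endosplit-triviality from $C_0$ to $D$, and the Brauer-construction version used to propagate vanishing h-marks---since both require carefully isolating the identity double coset from proper-subgroup-induced terms and exploiting absolute $p$-divisibility; once these are in place, the reduction to the $p$-group case of $S$ and the appeal to Proposition~\ref{isoclassesofstablehomotopy} are formal. The torsion-subgroup claim then follows directly: $T_V(G,S)$ is finite since there are only finitely many isomorphism classes of indecomposable trivial source $kG$-modules, so $\ker h_e$ is torsion; conversely, if $[C] \notin \ker h_e$ then $h_C(P) \neq 0$ for some $P$, and since $h_{C^{\otimes n}}(P) = n\,h_C(P)$ for all $n \in \Z$, the class $[C]$ has infinite order.
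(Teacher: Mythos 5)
Your reduction to the source $D$ of the cap $C_0$ contains a step that is false as stated, and the rest of the surjectivity argument depends on it. You claim that in the Mackey decomposition of $\Res^G_S\Ind^G_S D$ the terms induced from proper subgroups of $S$ ``carry non-Sylow vertices and are hence $\Res^G_S V$-projective (as $V$ is absolutely $p$-divisible).'' Absolute $p$-divisibility only says that $V(S)=0$ for $S$ Sylow; it puts no lower bound on the set $\calX_V$, so a subgroup $Q$ with $1<Q<S$ may perfectly well satisfy $V(Q)=0$, in which case a module with vertex $Q$ is \emph{not} $V$-projective. Concretely, take $V=kG$: then $\Res^G_S V$-projectivity means freeness over $kS$, and the Mackey terms $\Ind^S_{S\cap{}^xS}\Res\,{}^xD$ with $1<S\cap{}^xS<S$ are certainly not free in general. (This is exactly why Theorem \ref{sufficientlylargeX} must \emph{assume} ${}_{kH}\catmod(\Res^G_H V)\supseteq{}_{kH}\catmod(\mathfrak{X})$ rather than deduce it.) Without this step you cannot conclude that $D$ is $\Res^G_S V$-endosplit-trivial, so the appeal to Lemma \ref{relprojectivitylimitingthm} over $S$ and the conclusion $D\cong k[0]$ are unsupported; the entire surjectivity argument collapses.

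The paper avoids sources and Mackey formulas entirely and argues much more directly: induct on the number of nonzero terms of $C$. If $i>0$ is the top nonzero degree, then $h_C=0$ forces $C(P)$ to be exact at $i$ for every $P\in s_p(G)$, so $d_i(P)$ is injective for all $P$, and by the split-injectivity criterion for maps of $p$-permutation modules (\cite[Theorem 5.8.10]{L181}) $d_i$ is split injective; hence $C\cong C'\oplus(C_i\xrightarrow{\sim}C_i)$ with $C'$ shorter. This reduces to a complex concentrated in one degree, i.e.\ a $V$-endotrivial $p$-permutation module, whose class is that of its trivial source $V$-endotrivial summand, giving $\ker h_e\cong T_V(G,S)$. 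If you want to salvage your strategy, you would need to replace the false projectivity claim with an argument that works for arbitrary absolutely $p$-divisible $V$ -- but the degreewise peeling above is both shorter and already available from the tools in the paper. Your injectivity of $\phi$ and the final torsion argument are fine.
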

    \begin{proof}
        Note $T_{k,V}(G,S) \leq \ker (h_e)$. Suppose $[C] \in \ker (h_e)$, and let $C$ be an indecomposable representative of $[C]$. First suppose $C$ has exactly one nonzero term - since $[C] \in \ker (h_e)$, it follows that the nonzero term must be in degree 0. It follows that $C_0^* \otimes_k C_0 \cong k \oplus P$, for some $V$-projective $kG$-module $P$, i.e. $C_0$ is a relatively $V$-endotrivial module. Moreover, $C_0$ is $p$-permutation and therefore has a unique trivial source $V$-endotrivial direct summand, whence the result.

        Now, suppose for contradiction that $C$ has at least 2 nonzero terms. Then one term of $C$ must be in a nonzero degree. Suppose without loss of generality $C$ has a nonzero terms in a positive degree. Let $i > 0$ be the greatest integer for which $C_i \neq 0$. We label the differentials of $C$ by $d_i$. Since $C$ is exact at $i$, $d_i: C_i \to C_{i-1}$ is injective. Moreover, for any $P \in s_p(G)$, since $C(P)$ is exact at $i$ and $C_j(P) = 0$ for all $j > i$, $d_i(P)$ is injective as well. Therefore, $d_i$ is split injective by \cite[Theorem 5.8.10]{L181}. This implies a contractible term isomorphic to $0 \to C_i \xrightarrow{\id} C_i \to 0$ splits off $C$, however this is a contradiction since we assumed $C$ to be indecomposable. Therefore, the only elements of $\ker(h_e)$ are of the form $[M[0]]$, where $M$ is a trivial source $V$-endotrivial $kG$-module, as desired. The final statements follow similarly to those in Theorem \ref{thm:kerofweakhmk}.
    \end{proof}

    \begin{remark}
        In particular, since $\ker( h_e)$ is the torsion subgroup of $e\calE^V_k(G)$, we have a split exact sequence of abelian groups \[0 \to T_{k,V}(G,S) \to e\calE_k^V(G) \to \im (h_e) \to 0,\] since $\im (h_e)$ is a free abelian group. Unlike the situation for endotrivial complexes, this sequence is not canonically split, one must choose a basis for the free part of $e\calE_k^V(G)$. The reason for this is that in general, given any $C \in e\calE^V_K(V)$, $\calH_C(P)\in T_{k,V}(N_G(P)/P)$ may not have a $p$-permutation representative, and therefore we do not have a clear well-defined map from $e\calE_k^V(G)$ to $T_{k,V}(G,S)$.
    \end{remark}

    \begin{corollary}
        Two indecomposable $V$-endosplit-trivial chain complexes are isomorphic chain complexes if and only if they have the same h-marks and isomorphic nonzero homology.
    \end{corollary}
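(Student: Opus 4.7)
The forward direction is immediate: an isomorphism $C \cong D$ in $Ch^b({}_{kG}\triv)$ induces, by functoriality of the Brauer construction and of homology, both the equality $h_C = h_D$ and an isomorphism of the nonzero homology modules.

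For the converse, suppose $C$ and $D$ are indecomposable $V$-endosplit-trivial complexes with $h_C = h_D$ and $H_{h_C(1)}(C) \cong H_{h_D(1)}(D) \cong M$ as $kG$-modules. The plan is to show $[C] = [D] \in e\calE_k^V(G)$ by analyzing $[C \otimes_k D^*]$, and then to deduce $C \cong D$ in $Ch^b({}_{kG}\triv)$ from indecomposability.

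First, I would observe that $C \otimes_k D^*$ is $V$-endosplit-trivial by Proposition \ref{omnibusforendosplitpperms}(b), and that by the K\"unneth formula its homology is concentrated in degree $h_C(1) - h_D(1) = 0$ with value $M \otimes_k M^* \cong k \oplus P'$, where $P'$ is $V$-projective (since $M$ is $V$-endotrivial, by Theorem \ref{equivdefforvendosplit}). Its h-mark function is $h_C - h_D = 0$, so the preceding theorem places $[C \otimes_k D^*]$ in $\ker h_e \cong T_V(G,S)$. The identification used in that proof assigns to such a class the indecomposable $V$-endotrivial summand of $H_0$ of an unshifted representative; here that summand is $k$, since $k$ is not $V$-projective (as $V$ is absolutely $p$-divisible) while every indecomposable summand of $P'$ is $V$-projective. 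Consequently $[C \otimes_k D^*] = [k[0]] = 0$ in $e\calE_k^V(G)$, which yields $[C] = [D]$.

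To conclude, I would extract the chain-complex isomorphism from the equality of classes. Since $C$ is indecomposable in $Ch^b({}_{kG}\triv)$ and has nonzero $V$-endotrivial homology---hence is neither contractible nor $V$-projective---Proposition \ref{endotrivialsummand} implies that $C$ is its own cap, and similarly for $D$. The equality $[C] = [D]$ in $e\calE_k^V(G)$ then forces the caps to coincide, giving $C \cong D$ in $Ch^b({}_{kG}\triv)$. The main subtlety I anticipate is the bookkeeping in identifying the canonical representative in $T_V(G,S)$ of $[C \otimes_k D^*]$ as $k$, and in verifying that an indecomposable $V$-endosplit-trivial complex really is its own cap rather than merely sharing the cap with extra contractible or $V$-projective summands.
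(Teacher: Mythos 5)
Your proof is correct. Since the paper states this corollary without an explicit proof, I'll evaluate the argument on its own merits rather than compare it to a reference proof.

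The forward direction is straightforward, as you note. For the converse, your strategy is the natural one: pass to $[C \otimes_k D^*]$, observe it lands in $\ker h_e$ since $h_{C \otimes_k D^*} = h_C - h_D = 0$, and then pin down the element of $\ker h_e \cong T_V(G,S)$ by examining the homology. The key observation that the unique indecomposable $V$-endotrivial summand of $M \otimes_k M^* \cong k \oplus P'$ must be $k$ (since any summand of $P'$ is $V$-projective, and a $V$-projective module cannot be $V$-endotrivial when $V$ is absolutely $p$-divisible because that would force $k$ to be $V$-projective) is exactly the right point and is handled correctly. The final step---that $[C] = [D]$ in $e\calE_k^V(G)$ together with indecomposability forces $C \cong D$---rests on Remark \ref{equivdefinitionofgroups}(b) and the observation that an indecomposable $V$-endosplit-trivial complex is its own cap, which you correctly flag as the subtlety: it holds because such a complex has $V$-endotrivial (in particular nonzero, non-$V$-projective) homology, hence cannot be contractible or $V$-projective, hence cannot be identified with the zero object in $\underline{K}^b({}_{kG}\triv)_V$.

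One small point worth making explicit: the citation of Proposition \ref{omnibusforendosplitpperms}(b) shows that $C \otimes_k D^*$ is a shifted endosplit $p$-permutation resolution; one then needs Theorem \ref{equivdefforvendosplit}(b) (which you invoke) to conclude it is actually $V$-endosplit-trivial, since its homology $M \otimes_k M^*$ is $V$-endotrivial. You have this, but the logical ordering could be stated a touch more crisply. Otherwise the argument is complete and I see no gaps.
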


    \section{Borel-Smith conditions}

    In this section, we give a partial set of conditions which h-mark functions must satisfy. These conditions come from Borel-Smith functions, a type of superclass function for $p$-groups which arise both from homotopy representations of spheres (see \cite[Page 210]{td87}) and from the study of endo-permutation modules (see \cite{BoYa06}). Borel-Smith functions are defined by conditions imposed at certain subquotients of a group. We will show that these same conditions must be met for our h-mark functions, but only at certain subquotients which satisfy a condition relating to $V$. To do this, we will show that h-mark functions of endotrivial complexes are Borel-Smith functions.

    \begin{definition}\label{def:borelsmith}{\cite[Definition 3.1]{BoYa06}}
        A superclass function $f \in CF(G)$ is called a \textit{Borel-Smith function} if it satisfies the following conditions:
        \begin{enumerate}
            \item If $p$ is odd, $H \trianglelefteq L \leq G,$ and $L/H \cong \Z/p\Z$, then $f(H) - f(L)$ is even.
            \item If $H \trianglelefteq L \trianglelefteq N \leq N_G(H)$ and $L/H \cong \Z/2\Z$, then $f(H) - f(L)$ is even if $N/H\cong \Z/4\Z$, and $f(H) - f(L)$ is divisible by $4$ if $N/H$ is the quaternion group of order 8.
            \item If $H \trianglelefteq L \leq G, L/H \cong \Z/p\Z\times \Z/p\Z$, $H_i/H$ the subgroups of order $p$ in $L/H$, then \[f(H) - f(L) = \sum_{i=0}^p \big(f(H_i) - f(L)\big).\]
        \end{enumerate}

        These conditions are referred to as the \textit{Borel-Smith conditions}. The were first discovered as the conditions satisfied by the dimension function of a homology mod $p$ sphere with a $G$-action. The set of Borel-Smith functions is an additive subgroup of $CF(G)$ which we denote by $CF_b(G)$. Note that Condition (b) sometimes requires that $f(H) - f(L)$ is divisible by $4$ when $N/H$ is a generalized quaternion 2-group. However, this requirement is redundant, since any generalized quaternion group of order at least 8 contains the quaternion group of order 8 as a subgroup, which contains its unique subgroup of order 2.

        Given any set $\calX$ of subgroups of $G$ that is closed under $G$-conjugation, we say $f \in CF(G, \calX)$ is a Borel-Smith function if it satisfies the above conditions wherever possible. For instance, if $H \trianglelefteq L \trianglelefteq N \leq N_G(H)$ satisfies $L/H \cong \Z/2\Z$ and $N/H \cong Q_8$, if $H, L \in \calX$ but $N \not\in \calX$, $f(H) - f(L)$ must be divisible by $4$, since $f(H)$ and $f(L)$ are defined, even though $f(N)$ is not. Again, the set of Borel-Smith functions on $\calX$ is an additive subgroup of $CF(G,\calX)$ which we denote by $CF_b(G, \calX). $
    \end{definition}

    We first show the h-marks of an endotrivial complex must satisfy the Borel-Smith conditions.

    \begin{lemma}
        \begin{enumerate}
            \item Let $p$ be an odd prime and $G = \Z/p\Z$. If $C$ is an endotrivial complex of $kG$-modules, then $h_C(1) - h_C(G)$ is even.
            \item Let $p = 2$ and $G = \Z/4\Z$. Let $H$ be the unique subgroup of order 2 in $G$. If $C$ is an endotrivial complex of $kG$-modules, then $h_C(1) - h_C(H)$ is even.
            \item Let $p = 2$ and $G = Q_8$. Let $Z$ be the unique subgroup of order 2 in $G$. If $C$ is an endotrivial complex of $kG$-modules, then $h_C(1) - h_C(Z)$ is divisible by 4.
            \item Let $G = \Z/p\Z \times \Z/p\Z$, and let $H_0, \dots, H_p$ be the subgroups of order $p$ in $G$. If $C$ is an endotrivial complex of $kG$-modules, then \[h_C(1) - h_C(G) = \sum_{i=0}^p h_C(H_i) - h_C(G).\]
        \end{enumerate}
    \end{lemma}
    \begin{proof}
        (a) through (c) may be shown directly by the classifications of $\calE_k(G)$ given in \cite[Section 6]{SKM23}, however we provide a direct proof. Let $C$ be an endotrivial complex of $kG$-modules. It follows from \cite[Proposition 4.3 (b) and (c)]{SKM23} that $\hat{C} := \Inf^G_{G/Z} C(Z)$ is an endotrivial complex of $kG$-modules which satisfies that for all $1 \neq H \leq G$, $h_C(H) = h_{\hat{C}}(H)$, and $h_{\hat{C}}(1) = h_C(Z).$ Therefore, $D:=C \otimes_k \hat{C}^*$ is an endotrivial complex satisfying for all $1 \neq H \leq G$, $h_D(H) = 0$ and $h_D(1) = h_C(1) - h_C(Z)$.

        Lemma \ref{relprojectivitylimitingthm} implies that $D$ is homotopy equivalent to an endotrivial chain complex with the trivial module in degree 0 and the only other $kG$-modules which appear in $D$ are projective. Moreover, $D$ is acyclic in only one degree, $h_D(1) = h_C(1) - h_C(Z)$, and since it is endotrivial, that homology has $k$-dimension 1, and therefore must be the trivial $kG$-module, since $G$ is a $p$-group for (a)-(c). It follows that $D$ contains as a direct summand a truncated periodic resolution or injective hull of the trivial $kG$-module. The result now follows by considering the period the trivial $kG$-module in (a)-(c): in (a) and (b) $k$ has period 2 and in (c) $k$ has period 4.

        (d) follows directly from the classification of $\calE_k(\Z/p\Z \times \Z/p\Z)$ given in \cite[Proposition 6.4]{SKM23}, since every element in the basis of $\calE_k(\Z/p\Z \times \Z/p\Z)$ given in the proposition satisfies the condition.

    \end{proof}

    \begin{theorem}
        Let $C$ be an endotrivial complex. Then the h-marks of $C$ satisfy the Borel-Smith conditions, i.e. $h_C \in CF_b(G,p)$.
    \end{theorem}
    \begin{proof}
        We will show condition (a) is satisfied, the other conditions follow by analogous arguments. Suppose $p$ is odd, $H \trianglelefteq L \leq G$ with $H, L \in s_p(G)$, and $L/H \cong \Z/p\Z$. Then $D:=\Res^{N_G(H)/H}_{L/H} C(H)$ is an endotrivial complex of $k[L/H]$-modules, and \cite[Proposition 4.3 (a) and (b)]{SKM23} imply $h_D(L/H) = h_C(L)$, $h_D(H/H) = h_C(H)$. The previous lemma implies $h_D(H/H) - h_D(L/H)$ is even, thus $h_C(H) - h_C(L)$ is even as well, as desired.
    \end{proof}

    \begin{corollary}
        Let $V$ be an absolutely $p$-divisible $p$-permutation $kG$-module, let $\calX_V \subseteq s_p(G)$ denote the set of $p$-subgroups of $G$ for which $V(P) = 0$, and let $C$ be a weakly $V$-endotrivial complex of $kG$-modules. Then the h-marks of $C$ satisfy the Borel-Smith conditions, i.e. $h_C \in CF_b(G, \calX_V)$.
    \end{corollary}
    \begin{proof}
        This follows since for every $P \in \calX_V,$ $C(P)$ is an endotrivial complex, and one may verify an analogue of \cite[Proposition 4.3(b)]{SKM23}: for any pair of $p$-subgroups $P\trianglelefteq Q \in \calX_V$, we have $h_C(Q) = h_{C(P)}(Q/P)$.
    \end{proof}

    \begin{definition}
        Let $V$ be a $p$-permutation $kG$-module. Say $f \in CF_b(G,p)$ \textit{satisfies the Borel-Smith conditions at $V$} if $f$ satisfies the conditions given in Definition \ref{def:borelsmith} whenever the $p$-subgroup $H$ (in the notation of the conditions) satisfies $V(H) = 0$.
    \end{definition}

    \begin{corollary}
        Let $V$ be an absolutely $p$-divisible $p$-permutation $kG$-module, and let $C$ be a $V$-endosplit-trivial complex of $kG$-modules. Then the h-marks of $C$ satisfy the Borel-Smith conditions at $V$.
    \end{corollary}
    \begin{proof}
        Again, this follows since for every $P$ satisfying $V(P) = 0$, $C(P)$ is an endotrivial complex.
    \end{proof}

    In future work, we will use the machinery developed in this paper to prove that, conversely, every Borel-Smith function $f \in CF_b(G,p)$ is the h-mark function of an endotrivial complex. The following conjectures are generalizations of this fact.

    \begin{conjecture}\label{conj}
        Let $V$ be an absolutely $p$-divisible $p$-permutation $kG$-module.
        \begin{itemize}
            \item If $f \in CF_b(G, \calX_V)$, then there exists a weakly $V$-endotrivial complex $C$ such that $f = h_C$.
            \item If $f \in CF(G,p)$ satisfies the Borel-Smith conditions at $V$, then there exists a $V$-endosplit-trivial chain complex $C$ such that $f = h_C$.
            \item Let $C$ be a weakly $V$-endotrivial complex. The isomorphism class $[C] \in w\calE_k^V(G)$ contains a $V$-endosplit-trivial complex.
        \end{itemize}
        
    \end{conjecture}
    If the first two assertions of Conjecture \ref{conj} hold, the third does as well.

    \section{The case of $V = kG$}

    In this section, we examine the case of $V = kG$. We set $V := kG$ for the entire section for ease of notation. Here, $V$-endotrivial modules are endotrivial modules in the usual sense. Therefore, $V$-endosplit-trivial complexes are shifted endosplit $p$-permutation resolutions of (non-relative) endotrivial modules. In fact, modulo $V$-projective chain complexes, strongly $V$-endotrivial complexes are $V$-endosplit-trivial complexes.

    \begin{theorem}
        Let $C$ be an indecomposable strongly $V$-endotrivial complex. Then $C$ is an $V$-endosplit-trivial complex. In particular, the inclusion $e\calE_k^{kG}(kG) \hookrightarrow s\calE_k^{kG}(kG)$ is an equality.
    \end{theorem}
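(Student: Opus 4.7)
The plan is to apply the local characterization of $V$-endosplit-triviality from Theorem \ref{equivdefforvendosplit}: $C$ is $V$-endosplit-trivial if and only if $C(P)$ has nonzero homology concentrated in exactly one degree for every $P \in s_p(G)$. Since strong $V$-endotriviality implies weak, Theorem \ref{equivdefinitionweakendotrivial} already covers every $P \in \calX_V = s_p(G) \setminus \{1\}$; for $V = kG$ the only $p$-subgroup not in $\calX_V$ is $P = 1$. The entire problem thus reduces to showing that $C = C(1)$ has nonzero homology concentrated in exactly one degree.

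To unpack the strong condition, a $V$-projective chain complex $D$ with $V = kG$ is by definition a direct summand of $\Ind^G_1 E$ for some bounded chain complex $E$ of $k$-vector spaces. Because $k$ is a field, $E \simeq \bigoplus_i H_i(E)[i]$, so $\Ind^G_1 E$ is homotopy equivalent to a sum of free $kG$-modules placed in distinct degrees with zero differentials, and Krull-Schmidt in $K^b({}_{kG}\triv)$ yields $D \simeq \bigoplus_i P_i[i]$ with each $P_i$ a projective $kG$-module. Applying K\"unneth to $C^* \otimes_k C \simeq k[0] \oplus \bigoplus_i P_i[i]$ gives $H_0(C^* \otimes_k C) = \bigoplus_i \End_k(H_i(C)) \cong k \oplus P_0$; since $k$ is not a summand of any projective $kG$-module, Krull-Schmidt forces exactly one index $i_0$ for which $\End_k(H_{i_0}(C))$ contains $k$ as a summand, and for every $i \neq i_0$ the module $\End_k(H_i(C)) = H_i(C)^* \otimes_k H_i(C)$ is projective. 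Vanishing of Tate cohomology $\widehat{H}^0(G, \End_k(H_i(C)))$ then says $\id_{H_i(C)}$ factors through a projective, forcing $H_i(C)$ itself to be projective for every $i \neq i_0$.

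To conclude, I would exploit the indecomposability of $C$ in $Ch^b({}_{kG}\triv)$ to rule out a nonzero projective $H_i(C)$ for $i \neq i_0$. If $H_i(C) = Q$ were nonzero, then projectivity of $Q$ would split the sequence $0 \to \im d_{i+1} \to \ker d_i \to Q \to 0$ and produce a chain map $s : Q[i] \to C$; self-injectivity of $kG$ (so that $Q$ is injective) would let the quotient $\ker d_i \twoheadrightarrow Q$ extend to a $kG$-linear map $r_i : C_i \to Q$ vanishing on $\im d_{i+1}$, hence to a chain retraction $r : C \to Q[i]$ with $rs = \id_Q$. The idempotent $sr$ would then split $Q[i]$ off as a direct summand of $C$ in $Ch^b({}_{kG}\triv)$, the complementary kernel inheriting $p$-permutation components as summands of the $C_j$. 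Indecomposability would force $C = Q[i]$, but then $C^* \otimes_k C = (Q^* \otimes_k Q)[0]$ is a projective $kG$-module in degree zero and cannot contain $k$ as a summand, contradicting strong $V$-endotriviality. Hence $H_i(C) = 0$ for every $i \neq i_0$, so $C$ has homology in exactly one degree, and Theorem \ref{equivdefforvendosplit} identifies $C$ as $V$-endosplit-trivial; surjectivity of the inclusion $e\calE_k^{kG}(G) \hookrightarrow \calS\calE_k^{kG}(G)$ follows from Proposition \ref{inclusionintolargerendotrivialgroupisinjective}.

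The main obstacle is the last splitting step: producing the chain retraction $r$ inside $Ch^b({}_{kG}\triv)$ rather than merely in $K^b({}_{kG}\triv)$ is precisely what lets indecomposability as an actual chain complex deliver the contradiction, and this hinges crucially on self-injectivity of $kG$, which is special to the group algebra and not available in broader relatively endotrivial settings.
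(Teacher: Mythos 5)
Your argument is essentially the paper's: decompose $D$ as a zero-differential complex of projectives, apply the K\"unneth formula to isolate a single degree $i_0$ with $\End_k(H_{i_0}(C))$ non-projective, observe that projectivity of $\End_k(H_i(C))$ forces $H_i(C)$ projective for $i\neq i_0$ (you use Tate cohomology where the paper cites Proposition \ref{omnibus}(k), but these are the same fact), and finally use self-injectivity of $kG$ to split off $H_i(C)[i]$ and contradict indecomposability. The only cosmetic difference is that you frame the reduction via the Brauer-local characterization and $P=1$, while the paper works directly with the homology of $C$; the substance is identical.
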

    \begin{proof}
        First, note that $V$-projective chain complexes of $kG$-modules can be expressed as direct sums of shifts of chain complexes of the form $0\to P \xrightarrow[]{\sim} P \to 0$ or $0\to P \to 0$, where $P$ is a projective $kG$-module. This follows since all $k$-vector space chain complexes decompose in this way. Therefore, $C^* \otimes_k C \simeq k[0] \oplus D$, where $D$ is a $V$-projective chain complex, and we may assume without loss of generality that it has no contractible components. Therefore, $D$ is a chain complex with zero differentials and projective homology. Moreover, since $C^* \otimes_k  C \cong C \otimes_k C^*$, it follows that $D^* \cong D$, and since $D$ has zero differentials, equivalently $D_i^* \cong D_{-i}$. It suffices to show that $H_i(D) = 0$ for all $i \neq 0$.

        We have by the K\"unneth formula and exactness of the $k$-dual that \[H_0(C^*\otimes_k C) \cong \bigoplus_{i\in \N_{\geq 0}} H_{-i}(C^*) \otimes_k H_{i}(C) \cong \bigoplus_{i\in \N_{\geq 0}} H_i(C)^* \otimes_k H_i(C),\] where the direct sum has a finite number of nonzero terms. By the characterization of $D$, $H_0(C^*\otimes_k C) \cong k \oplus P$ for some projective $kG$-module $P$. Therefore, by the Krull-Schmidt theorem there exists some $i \in \N_{\geq 0}$ for which $H_i(C)$ is endotrivial, and for all $j \neq i$, for which $H_j(C) \neq 0$, $H_j(C) \otimes_k H_j(C)^*$ is projective. From \cite[Proposition 2.2.2]{CL12}, it follows that $H_j(C)$ is projective, since projectivity is the same as $V$-projectivity for modules.

        Suppose for contradiction there exists some $j$ for which $H_j(C)$ is projective. Then we have a short exact sequence of $kG$-modules \[0 \to \im (d_{j+1}) \to \ker (d_j) \to H_j(C) \to 0,\] and projectivity implies this sequence is split. Therefore $H_j(C)$ is a submodule of $\ker (d_j)$, hence a submodule of $C_j$. Since $kG$ is self-injective, $H_j(C)$ is injective as well, hence $H_j(C)$ is in fact a module direct summand of $C_j$. Now since $\ker (d_j) = H_j(C) \oplus \im (d_{j+1})$, in particular $H_j(C) \cap \im (d_{j+1}) = \{0\}$ so $C$ has as direct summand the singleton chain complex $0\to H_j(C) \to 0$, contradicting that $C$ is indecomposable.

        Therefore, there can not exist any integers $j$ for which $H_j(C)$ is projective. It follows that $H_i(C)$ is nonzero for exactly one $i \in \Z$, hence $C$ is an endosplit $p$-permutation resolution of an endotrivial module, i.e. a $V$-endosplit-trivial complex.
    \end{proof}

    Furthermore, every weakly $V$-endotrivial complex is isomorphic in $K^b({}_{kG}\sttriv_V)$ to a $V$-endosplit-trivial complex, as we now demonstrate.

    \begin{construction}
        Given any bounded chain complex $C$, one can construct a new chain complex $D$ that has nonzero homology in at most one degree $d$ as follows. Suppose without loss of generality $C_i = 0$ for all $i < 0$ and $i > n$, and for ease of explanation, suppose $0 < d < n$. Start with $D = C$, then add a projective cover $P_1$ of $C_0$ to $D_1$ as direct sum and add the corresponding cover $P_1 \to D_0$ to the differential $d_1$. Then $H_0(D) = 0$. Then, if $1 \neq d$, add a projective cover $P_2$ of $\ker (d_1)$ to $D_2$ and add the corresponding cover $P_2 \to \ker (d_1) \hookrightarrow D_1$. By construction, $H_1(D) = 0$, and $d_0 \circ d_1 = 0$. Repeat this process at all $D_i$ for $i \{0,\dots, d-1\}$. Then, $H_i(D) = 0$ for all $i < d$.  Visually, the construction is as follows:
        \begin{figure}[H]
            \centering
            \begin{tikzcd}
                C_d \ar[r]& C_{d-1} \ar[r] & \cdots \ar[r] & C_1 \ar[r] & C_0\\
                P_d \ar[ru]\ar[r] & P_{d-1}\ar[ru] \ar[r] & \cdots \ar[ru] \ar[r] & P_1 \ar[ru] \\
            \end{tikzcd}
        \end{figure}

        Then, take an injective hull of $C_n,$ labeled $I_{n-1}$, and add it to $D_{n-1}$ as direct sum and add the envelope to the differential $d_n$. Then $H_n(D) = 0$. Then if $n-1 \neq d$, let $I_{n-2}$ be an injective envelope of $D_{n-1}/\im (d_n)$. Add $I_{n-2}$ to $D_{n-2}$, and add the corresponding composition $D_{n-1} \rightarrow D_{n-1}/\im (d_n) \hookrightarrow I_{n-2}$ to $d_{n-1}$. By construction, $d_{n-1}\circ d_n = 0$ and $H_{n-1}(D) = 0$. Repeat this process for all $i \in \{n, n-1,\dots, d+1\}$. Then $H_i(D) = 0$ for all $i > d$. Visually, the resulting complex $D$ is as follows:
        \begin{figure}[H]
            \centering
            \begin{tikzcd}
                C_n \ar[r] \ar[rd] & C_{n-1} \ar[r] \ar[rd] & \cdots \ar[r] \ar[rd] & C_{d+1} \ar[r] \ar[rd] & C_d \ar[r]& C_{d-1} \ar[r] & \cdots \ar[r] & C_1 \ar[r] & C_0\\
                & I_{n-1} \ar[r] & \cdots \ar[r] & I_{d+1} \ar[r]  & I_d \oplus P_d \ar[ru]\ar[r] & P_{d-1}\ar[ru] \ar[r] & \cdots \ar[ru] \ar[r] & P_1 \ar[ru] \\
            \end{tikzcd}
        \end{figure}

        Note that $C(P) = D(P)$ for any nontrivial $p$-subgroup $P \in s_p(G)$. In particular, $C \cong D$ in $K^b({}_{kG}\stmod_V)$.
    \end{construction}

    \begin{remark}
        It may be possible to, in some cases, perform a local analogue of this process as well. Suppose for instance that we wish to augment a chain complex of $kG$-modules $C$ such that $C(P)$ has homology concentrated in one degree. The Green correspondence gives a bijection between projective indecomposable $k[N_G(P)/P]$-modules and trivial source $kG$-modules with vertex $P$. Therefore, we can append trivial source $kG$-modules with vertex $P$ to $C$ in such a way that the additions correspond to taking projective resolutions or injective hulls of the $P$-local kernels and cokernels. The difficulty which arises, however, is that while it is possible to draw the differentials at the $P$-local level to be projective resolutions and injective hulls of differentials of $C(P)$, there is no guarantee that the corresponding maps at the global level result are still differentials, i.e. we may have $d^2 \neq 0$.

        If one was able to show that this process can be carried out without issues, then it would follow that given a weakly $V$-endotrivial complex $C$, the equivalence class $[C] \in w\calE_k^V(G)$ contains a $V$-endosplit-trivial complex.
    \end{remark}

    \begin{theorem}
        The group homomorphism $\iota: e\calE_k^V(G) \to w\calE_k^V(G)$ is surjective. Moreover, $\ker(\iota)$ is generated by representatives of truncated projective resolutions or injective hulls of the trivial $kG$-module $k$.
    \end{theorem}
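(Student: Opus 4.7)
The plan is to prove surjectivity by homological concentration and then determine $\ker\iota$ via h-mark bookkeeping combined with Krull--Schmidt.

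For surjectivity, given $C$ representing a class $[C] \in \calW\calE_k^V(G)$, apply the homological concentration process defined just above the statement to produce a complex $C'$ with homology concentrated in a single degree. Because $V = kG$, $V$-projectivity coincides with ordinary projectivity, and because $kG$ is self-injective, every module appended during the process (projective covers and injective envelopes) is a projective $p$-permutation module; hence $C' \in Ch^b({}_{kG}\triv)$. For each nontrivial $p$-subgroup $P$, the added projectives vanish under the Brauer construction, so $C'(P) = C(P)$ retains nonzero homology in exactly one degree of $k$-dimension one by the weak $V$-endotriviality of $C$; by construction $C'(1) = C'$ also has nonzero homology in exactly one degree. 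Theorem \ref{equivdefforvendosplit} then yields that $C'$ is $V$-endosplit-trivial. Since the augmenting modules are $V$-projective, they vanish in $K^b({}_{kG}\underline\triv_V)$, giving $\iota([C']) = [C]$.

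For the kernel, note first that a truncated projective resolution $C_n := (0 \to P_n \to \cdots \to P_1 \to k \to 0)$ of $k$ satisfies $C_n(P) \cong k[0]$ for every nontrivial $p$-subgroup $P$, so $[C_n] \in \ker\iota$; the analogous statement holds for truncated injective hulls, whose classes furnish the tensor inverses. Conversely, fix $[C'] \in \ker\iota$ with $C'$ an indecomposable $V$-endosplit-trivial complex, and set $n := h_{C'}(1)$. Then $[C' \otimes_k C_n^*]$ lies in $\ker\iota$ with all h-marks zero (h-marks vanish on $\calX_V$ by the kernel condition and cancel at $1$ by additivity), so by the earlier identification $\ker h_e \cong T_V(G,S)$ this class is represented by $M[0]$ for some trivial source $V$-endotrivial module $M$. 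The containment $[M[0]] \in \ker\iota$ then means $M \cong k$ in ${}_{kG}\underline\triv_V$, i.e. $M \oplus Q_1 \cong k \oplus Q_2$ for some projective $p$-permutation modules $Q_i$. Since $M$ is indecomposable and stably nonzero, it is non-projective, and Krull--Schmidt in ${}_{kG}\triv$ forces $M \cong k$. Hence $[C'] = [C_n]$, so $\ker\iota$ is generated by the classes of truncated projective resolutions and injective hulls of $k$.

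The hard part is executing the homological concentration cleanly within $Ch^b({}_{kG}\triv)$ while leaving the class in $\calW\calE_k^V(G)$ undisturbed: one must verify that the iterative addition of projective covers and injective envelopes in both directions assembles into a well-defined chain complex of $p$-permutation modules whose added pieces are all $V$-projective, so that they simultaneously vanish under the Brauer construction at nontrivial $p$-subgroups and in the passage to $K^b({}_{kG}\underline\triv_V)$. Once this is in place, the kernel computation reduces to additivity of h-marks, the known structure of $\ker h_e$, and Krull--Schmidt.
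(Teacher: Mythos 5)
Your argument is correct. The surjectivity half is the paper's own: concentrate homology and observe that the appended projective covers and injective hulls (which are $p$-permutation and, since $V = kG$, exactly the $V$-projective modules) die both under the Brauer construction at nontrivial $p$-subgroups and in $K^b({}_{kG}\underline\triv_V)$. The one point both you and the paper leave implicit is that the concentrated complex $C'$ has \emph{nonzero} homology in its remaining degree, as required by the criterion of Theorem \ref{equivdefforvendosplit}; this is easily supplied, e.g.\ from the congruence $\dim_k M \equiv \dim_k M(P) \pmod p$ for $p$-permutation modules, since an acyclic $C'$ would give $0 = \chi(C') \equiv \chi(C'(S)) = \pm 1 \pmod p$. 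The kernel computation, on the other hand, is a genuinely different route. The paper applies Lemma \ref{relprojectivitylimitingthm} directly to a kernel representative to force it into the shape ``projectives in every degree except a copy of $k$ in degree zero'' and then closes with an unspecified standard homological argument. You instead normalize the h-mark at the trivial subgroup by tensoring with $[C_n]^{-1}$, land in $\ker h_e \cong T_V(G,S)$, and eliminate the residual trivial source module via the kernel condition and Krull--Schmidt in ${}_{kG}\triv$. Your version trades the structural lemma for group-level bookkeeping already established earlier in the paper, makes the paper's final step precise, and in fact yields the sharper conclusion that every element of $\ker\iota$ \emph{equals} some $[C_n]$ rather than merely being a product of such classes; the paper's version, by contrast, exhibits the explicit chain-level form of a kernel representative.
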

    \begin{proof}
        The surjectivity of $\iota$ follows immediately from the previous construction. Suppose $C$ is a $V$-endosplit-trivial complex of $kG$-modules with $[C] \in w\calE_k^V(G)$. Since the h-marks of $C$ are trivial, it follows by Lemma \ref{relprojectivitylimitingthm} that $C$ is homotopy equivalent to a complex consisting of only projectives, with the exception of one indecomposable module in degree zero which is endotrivial. Moreover, since $[C] \in \ker(\iota)$, this complex is stably isomorphic to the trivial endotrivial complex, so the indecomposable endotrivial complex is the trivial complex $k[0]$. It follows that $C$ is homotopy equivalent to a complex of projectives in all degrees except 0, which contains only the trivial $kG$-module, and has nonzero homology in only 1 degree since $C$ is $V$-endosplit-trivial. A standard homological algebra argument implies that $C$ is homotopy equivalent to a truncated projective resolution, as desired.
    \end{proof}

    \begin{example}
        \begin{enumerate}
            \item When $G = SD_{2^n}$ is a semidihedral 2-group of order at least 16, recall from \cite{SKM23} that there exists a faithful endotrivial complex $C \in \calE_k(G)$ for which $h_C(H) = 2$ for $H\leq G$, the unique (up to conjugacy) noncentral subgroup of order 2, $h_C(1) = 4$, and $h_C(K) =0$ when $K \neq 1,H $. Define the $G$-set $X := G/H$. Recall (see for instance \cite{CaTh00}) that $E := \Omega(\Omega_X(k))$ is an endotrivial $kG$-module which satisfies $[E]^2 = [k] \in T(G)$. From this, it follows that \[C_E := kG \to kX \to k,\] with $k$ in degree 0, the differential $kX \to k$ the augmentation map, and the differential $kG \to kX$ corresponding to the projective cover of $\Omega_X(k)$, is a $V$-endotrivial complex with $h_{C_E}(1) = 2$, $h_{C_E}(H) = 1$, and $h_{C_E}(K) = 0$ for $K \neq 1, H$. Therefore, $[C_E\otimes_k C_E] = [C] \in \calE_k^1(G)$, that is, they are isomorphic up to homotopy equivalence and a direct sum of complexes of the form $0 \to kG\to 0$. We conclude it is possible for there to be torsion in the quotient $\calE_k^1(G)/\calE_k(G)$.

            \item Given any group $G$ which has $p$-rank greater than 1, the chain complex $C = kG \to k$ has $H_1(C) \cong \Omega(k)$, which is an endotrivial module. This cannot be an endotrivial complex since $\dim_k \Omega(k) = |G|- 1$, but is $V$-endotrivial. Since $G$ has $p$-rank greater than 1, for all $i \geq 2$, it follows that $C^{\otimes i}$ will also be $V$-endotrivial but not endotrivial, thus $\rk_\Z \calE_k^1(G)$ will almost always be strictly greater than $\rk_\Z \calE_k(G)$.
        \end{enumerate}

    \end{example}

    \section{Induction and restriction for subgroups containing Sylow $p$-subgroups} \label{sylowsection}

    In the study of endotrivial modules, a common goal is to determine the image of the restriction map $\Res^G_H: T_k(G) \to T_k(H)$, where $H \leq G$ contains a Sylow $p$-subgroup. In this section, we will consider the analogous question for endotrivial and relatively endotrivial complexes. To do this, we will consider induction of these complexes as well. We will show that as long as a $G$-stability condition is satisfied, induction preserves the important ``endosplit'' property. As a result, we can invoke the chain-complex theoretic Green correspondence.

    For this section, fix $S \in \Syl_p(G)$, $V$ an absolutely $p$-divisible $p$-permutation $kG$-module, and as before, let $\calX_V \subset s_p(G)$ be the set of $p$-subgroups of $G$ for which $V(P) = 0$. We first deduce some easy properties.

    \begin{prop} \label{omnibusrestrictionprops}
        Let $H \leq G$ with $S \leq H$ and let $x$ denote either $e$ or $w$.
        \begin{enumerate}
            \item $C \in Ch^b({}_{kG}\triv)$ is weakly $V$-endotrivial (resp. $V$-endosplit-trivial) if and only if $\Res^G_H C$ is weakly $\Res^G_H V$-endotrivial (resp. $\Res^G_H V$-endosplit-trivial).
            \item $\Res^G_H: x\calE^V_k(G) \to x\calE^{\Res^G_H V}_k(H)$ has kernel contained in $\ker (h_x)$. If $N_G(S) \leq H \leq G$ and $x = w$ or $e$, then $\Res^G_H: x\calE^V_k(G) \to x\calE^V_k(H)$ is injective.
            \item Let $D \in x\calE_k^{\Res^G_H V}(H)$. If $[D] \in \im\big( \Res^G_H:  x\calE^V_k(G) \to x\calE^{\Res^G_H V}_k(H)\big)$, then $\Ind^G_H D$ has a $V$-endotrivial complex direct summand.
            \item For every $C \in x\calE_k^V(G)$ there exists some $D \in x\calE_k^{\Res^G_H V}(H)$ such that $C \mid \Ind^G_H D$.
            \item For all $[D] \in \im \big(\Res^G_H: x\calE^V_k(G) \to x\calE^{\Res^G_H V}_k(H)\big)$, all $P \in s_p(G)$ satisfying $V(P) = 0$, and all $g \in G$ satisfying ${}^gP \leq H$, we have $h_D(P) = h_D({}^gP)$.
        \end{enumerate}
    \end{prop}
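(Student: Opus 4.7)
The plan is to deduce all five statements from the Brauer-construction characterizations (Theorem \ref{equivdefinitionweakendotrivial} and the Corollary following Theorem \ref{equivdefforvendosplit}) together with the observation that because $S \leq H$, every $p$-subgroup of $G$ is $G$-conjugate to a subgroup of $H$. The key identity underpinning everything is that for $P \leq H$, the Brauer quotient $(\Res^G_H C)(P)$ coincides with $C(P)$ (the underlying $kP$-module structure is unchanged in each degree), and similarly $V(P)=0$ iff $(\Res^G_H V)(P)=0$.

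For (a), the forward direction is immediate from Proposition \ref{indrespreservevproj}(a) together with commutation of $\Res^G_H$ with $\otimes_k$ and $(-)^*$. For the converse I would verify the local condition of Theorem \ref{equivdefinitionweakendotrivial} (or its $V$-endosplit-trivial analogue) at each $p$-subgroup of $G$ by first conjugating into $H$ and then applying the identity above. Part (d) then follows: take $D=\Res^G_H C$, which is $\Res^G_H V$-endotrivial by (a), and observe that $C$ has vertex $S\leq H$ by Proposition \ref{vtxset}, so $C \mid \Ind^G_H\Res^G_H C = \Ind^G_H D$ by Higman's criterion. The same inclusion gives (c), since if $[D]=\Res^G_H[C]$ then the indecomposable $V$-endotrivial cap of $\Ind^G_H D$ is $C$ itself. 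Part (e) is likewise immediate: for $P,{}^gP\leq H$, $(\Res^G_H C)(P)=C(P)$ and $(\Res^G_H C)({}^gP)=C({}^gP)\cong{}^gC(P)$, so $h(D(P))=h_C(P)=h_C({}^gP)=h(D({}^gP))$ by conjugation invariance.

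The content of the proposition lies in (b). For the first assertion, if $[C]\in\ker\Res^G_H$ then the h-marks of $\Res^G_H C$ vanish on every $p$-subgroup of $H$; since h-marks are $G$-invariant class functions and every relevant $P$ is $G$-conjugate to a subgroup of $S\leq H$, the full h-mark function of $C$ is zero, so $[C]\in\ker h$. For the second assertion, under $N_G(S)\leq H$ and $x\in\{\calW,e\}$, the first half combined with the identification $\ker h\cong T_V(G,S)$ from Section 9 reduces the problem to showing that $\Res^G_H$ is injective on classes of trivial-source $V$-endotrivial modules. Given such a module $M$ with vertex $S$, the Green correspondence yields $\Res^G_H M=f(M)\oplus M'$ with $f(M)$ indecomposable of vertex $S$ and every summand of $M'$ of strictly smaller vertex. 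Part (a) says $\Res^G_H M$ is $\Res^G_H V$-endotrivial, so by the omnibus on $V$-endotrivial modules in Section 5 each indecomposable summand is either $\Res^G_H V$-endotrivial or $\Res^G_H V$-projective; since $\Res^G_H V$-endotrivial indecomposables have vertex in $\Syl_p(H)$, the small-vertex summands of $M'$ are forced to be $\Res^G_H V$-projective. Hence the class of $\Res^G_H M$ in $T_{\Res^G_H V}(H,S)$ is determined by $f(M)$, and injectivity of the Green correspondence completes the argument.

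The main obstacle is the last step: one must verify that the non-Green summands $M'$ actually lie in ${}_{kH}\catmod(\Res^G_H V)$ rather than merely having smaller vertex than $S$. This is where absolute $p$-divisibility of $V$ and the constraint that indecomposable $V$-endotrivial modules have Sylow vertex intervene to close the gap.
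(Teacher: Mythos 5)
Your proposal is correct and follows essentially the same route as the paper: parts (a), (c), (d), (e) use the same local Brauer-quotient argument, conjugation into $H$, and the relative projectivity $C \mid \Ind^G_H\Res^G_H C$, while part (b) reduces via $\ker h \cong T_V(G,S)$ to injectivity of restriction on trivial-source $V$-endotrivial modules, which the paper likewise attributes to the Green correspondence. Your additional verification that the non-correspondent summands of $\Res^G_H M$ are $\Res^G_H V$-projective just spells out a step the paper leaves implicit.
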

    \begin{proof}
        \begin{enumerate}
            \item For weak endotriviality, the forward direction is clear since restriction preserves all h-marks and $V$-endotriviality. Suppose $\Res^G_H C$ is $\Res^G_H V$ endotrivial. Then given any $P\leq S$ for which $(\Res^G_H V)(P) = 0$ (equivalently, $V(P) = 0$), $\Inf^{N_H(P)}_{N_H(P)/P}((\Res^G_H C)(P)) \cong \Res^{N_G(P)}_{N_H(P)} (C(P))$, regarding the Brauer construction as a functor ${}_{kG}\catmod \to {}_{kN_G(P)}\catmod$. Since $\Res^G_H C$ is $\Res^G_H V$-endotrivial, $(\Res^G_H C)(P) $ has nonzero homology in one degree with it having $k$-dimension one. Therefore $\Res^{N_G(P)}_{N_H(P)} (C(P))$ has this property, and $C(P)$ does too. Since all $p$-subgroups of $G$ are conjugate to a subgroup of $S$, we conclude that $C$ is endotrivial. The $V$-endosplit-trivial case follows by a similar argument.

            \item $\Res^G_H$ preserves the h-marks of $C$. Therefore, if $C \in \ker (\Res^G_H)$, all the h-marks of $C$ are 0, and therefore $\ker( \Res^G_H) \subseteq \ker (h_x)$. The latter claim follows from the Green correspondence for chain complexes, as in this case, $\Res^G_H$ induces an injective map from the set of isomorphism classes of trivial source $V$-endotrivial $kG$-modules to the set of isomorphism classes of trivial source $\Res^G_H V$-endotrivial $kH$-modules.

            \item By relative projectivity, $C \mid \Ind^G_H \Res^G_H C$. Therefore, if $D = \Res^G_H C$, $\Ind^G_H D$ has a $V$-endotrivial direct summand, namely $C$.

            \item Set $D = \Res^G_H C$. Then $C \mid \Ind^G_H \Res^G_H C$.

            \item This follows since ${}^gC(P) \cong C({}^gP)$, therefore h-marks of $V$-endotrivial $kG$-complexes are invariant under $G$-conjugation.
        \end{enumerate}
    \end{proof}

    \begin{definition}
        Let $S\leq H\leq G$, $W$ an absolutely $p$-divisible $p$-permutation $kH$-module, and $C$ be a $W$-endotrivial $kH$-complex. We say \textit{$C$ is $G$-stable} if for all pairs of $p$-subgroups $P,Q \in s_p(H)$ which are $G$-conjugate and such that $h_C(P), h_C(Q)$ are defined, $h_C(P)= h_C(Q)$. Define the \textit{$G$-stable subgroup} $x\calE^W_k(H)^G\leq x\calE^W_k(H)$ to be the subgroup consisting of all classes of $G$-stable $W$-endotrivial complexes.

        Similarly, let $H\leq G$ be any subgroup of $G$, and let $C$ be an endosplit $p$-permutation resolution of $kH$-modules. Say \textit{$C$ is $G$-stable} if for all $G$-conjugate pairs of $p$-subgroups $P, Q \in s_p(H)$ for which $C(P)$ and $C(Q)$ are noncontractible, $C(P)$ and $C(Q)$ have nonzero homology concentrated in the same unique degree.
    \end{definition}

    Since h-marks are preserved under conjugacy, the image of $\Res^G_H: \calE^V_k(G) \to \calE^{\Res^G_H V}_k(H)$ is contained in $\calE^V_k(H)^G$. Therefore, one interesting question is to determine when $\Res^G_H: \calE^V_k(G) \to \calE^{\Res^G_H V}_k(H)^G$ is surjective. To investigate further, we must describe how additive induction and the Brauer construction behave together. Note that as a functor, the Brauer construction sends a homomorphism to its corresponding quotient homomorphism after restriction to fixed points. 

    \begin{theorem}\label{inductionbrauercommute}
        Let $P\in s_p(G)$, $H \leq G$, and $M$ be a $p$-permutation $kH$-module. We have the following natural isomorphism of $kN_G(P)$-modules, regarding the Brauer construction as a functor $-(P): {}_{kG}\triv \to {}_{kN_G(P)}\triv$: \[\left(\Ind^G_H M\right)(P) \cong \bigoplus_{x \in [N_G(P)\backslash G / H], P\leq {}^xH} \Ind^{N_G(P)}_{N_G(P)\cap {}^xH}\big( ({}^xM)(P) \big).\]
        In particular, since the isomorphism is natural, it also holds in the chain complex category $Ch^b({}_{kN_G(P)}\triv)$ (see for instance \cite[Definition 5.4.10]{L181}).
    \end{theorem}

    \begin{proof}
        First, we have $(\Ind^G_H M)(P) = (\Res^G_{N_G(P)} \Ind^G_H M)(P)$. Applying the Mackey formula yields the following natural isomorphism of $kN_G(P)$-modules:

        \begin{align*}
            (\Res^G_{N_G(P)} \Ind^G_H M)(P) &\cong \left(\bigoplus_{x \in [N_G(P)\backslash G / H]} \Ind^{N_G(P)}_{N_G(P) \cap {}^xH} \Res^{{}^xH}_{N_G(P) \cap {}^xH} ({}^xM)\right)(P)\\
            &\cong \bigoplus_{x \in [N_G(P)\backslash G / H]} \left(\Ind^{N_G(P)}_{N_G(P) \cap {}^xH} \Res^{{}^xH}_{N_G(P) \cap {}^xH}({}^xM)\right)(P)
        \end{align*}
        Given a double coset representative $x$, each indecomposable constituent of $\Ind^{N_G(P)}_{N_G(P) \cap {}^xH} \Res^{{}^xH}_{N_G(P) \cap {}^xH} ({}^xM)$ has a vertex contained in $N_G(P)\cap {}^xH$, so if $\left(\Ind^{N_G(P)}_{N_G(P) \cap {}^xH} \Res^{{}^xH}_{N_G(P) \cap {}^xH} ({}^xM)\right)(P) \neq 0$, then $N_G(P) \cap {}^xH$ contains an $N_G(P)$-conjugate of $P$, and since $P \trianglelefteq N_G(P)$, this occurs if and only if $P \leq {}^xH$. Therefore, all terms in the direct sum that do not satisfy $P \leq {}^xH$ are zero, and we have
        \[(\Ind^G_H M)(P) \cong \bigoplus_{x \in [N_G(P)\backslash G / H], P\leq {}^xH} \left(\Ind^{N_G(P)}_{N_G(P) \cap {}^xH} \Res^{{}^xH}_{N_G(P) \cap {}^xH}({}^xM)\right)(P).\]
        We now compute the term inside the direct sum when $P \leq {}^xH$. First, since $P \trianglelefteq N_G(P)$,
        \[ \Ind^{N_G(P)}_{N_G(P) \cap {}^xH} ({}^xM)^P = \left(\Ind^{N_G(P)}_{N_G(P) \cap {}^xH} \Res^{{}^xH}_{N_G(P) \cap {}^xH}{}^xM\right)^P.\]
        Indeed, for any $g \in N_G(P)$, and $m \in {}^xM$, $p\cdot (g\otimes m) = g\otimes m $ if and only if $g \otimes p^g m = g\otimes m$ for all $p \in P$, and the equality follows from there.
        We next claim \[\sum_{Q < P}\tr^P_Q \left(\Ind^{N_G(P)}_{N_G(P) \cap {}^xH} \Res^{{}^xH}_{N_G(P) \cap {}^xH} {}^xM\right)^Q = \Ind^{N_G(P)}_{N_G(P) \cap {}^xH} \left(\sum_{Q < P} \tr^P_Q ({}^xM)^Q\right).\]
        Indeed, for any $Q < P$, $g \otimes m \in  \left(\Ind^{N_G(P)}_{N_G(P) \cap {}^xH} \Res^{{}^xH}_{N_G(P) \cap {}^xH} {}^xM\right)^Q$ if and only if $m \in (^xM)^{Q^g}$. Now given coset representatives $u_1, \dots, u_k$ of $P/Q$, $(u_1 + \dots + u_k)g \otimes m = g \otimes (u_1^g + \dots + u_k^g)m$, and since $u_1^g, \dots,  u_k^g$ are coset representatives of $P/Q^g$, we have an equality of sets. Finally, for arbitrary groups $H\leq G$ and appropriate modules $N \subseteq M$, $\Ind^G_H M/N \cong \Ind^G_H M / \Ind^G_H N$ naturally by exactness of additive induction. We now compute, assuming $P \leq {}^xH$ and setting $N_{{}^xH}(P) = N_G(P)\cap {}^xH$:
        \begin{align*}
            \left(\Ind^{N_G(P)}_{N_{{}^xH}(P)} \Res^{{}^xH}_{N_{{}^xH}(P)}({}^xM)\right)(P) &:= \left(\Ind^{N_G(P)}_{N_{{}^xH}(P)} \Res^{{}^xH}_{N_{{}^xH}(P)}{}^xM\right)^P / \sum_{Q < P}\tr^P_Q \left(\Ind^{N_G(P)}_{N_{{}^xH}(P)} \Res^{{}^xH}_{N_{{}^xH}(P)} {}^xM\right)^Q \\
            &= \Ind^{N_G(P)}_{N_{{}^xH}(P)} ({}^xM)^P/\Ind^{N_G(P)}_{N_{{}^xH}(P)} \left(\sum_{Q < P} \tr^P_Q ({}^xM)^Q\right)\\
            &\cong \Ind^{N_G(P)}_{N_{{}^xH(P)}}\left(({}^xM)^P/ \left(\sum_{Q < P} \tr^P_Q ({}^xM)^Q\right) \right)\\
            &= \Ind^{N_G(P)}_{N_{{}^xH(P)}}\big(({}^xM)(P)\big)
        \end{align*}
        This completes the proof of the isomorphism for modules, and since all the isomorphisms used were natural, the result follows. 
    \end{proof}

    As a first result of Theorem \ref{inductionbrauercommute}, we can generate endosplit $p$-permutation resolutions for summands of induced modules which have $G$-stable endosplit $p$-permutation resolutions.

    \begin{corollary}\label{fusionstableinduction}
        Let $H \leq G$ and let $W$ be an absolutely $p$-divisible $p$-permutation $kH$-module.
        \begin{enumerate}
            \item Suppose $S \leq H \leq G$ and let $C$ be a $G$-stable weakly $W$-endotrivial complex. Then for all $P \in \calX_V$ , $\left(\Ind^G_H C\right)(P)$ has homology concentrated in degree $h_C({}^gP)$, where $g \in G$ is chosen such that ${}^gP \leq S$.
            \item Suppose $S \leq H \leq G$ and let $C$ be a $G$-stable $W$-endosplit-trivial complex of $kH$-modules. Then for all $P \in s_p(G)$, $\left(\Ind^G_H C\right)(P)$ has homology concentrated in the degree $h_C({}^gP)$, where $g \in G$ is chosen such that ${}^gP \leq S$.
            \item Let $N$ be a $kH$-module with an endosplit $p$-permutation resolution $C$. $\Ind^G_H C$ is an endosplit $p$-permutation resolution if and only if $C$ is $G$-stable. If this occurs, $\Ind^G_H C$ is an endosplit $p$-permutation resolution for $\Ind^G_H N$.
        \end{enumerate}

    \end{corollary}

    \begin{proof}
        (a) and (b) follow similarly. Since the isomorphism from the previous theorem is natural, it extends to an isomorphism of chain complexes \[\left(\Ind^G_H(C)\right)(P) \cong \bigoplus_{x \in [N_G(P)\backslash G / H], P \leq {}^xH} \Ind^{N_G(P)}_{N_G(P)\cap {}^xH} \left( {}^xC(P) \right).\] Since induction is exact and additive, and taking homology is additive, for any $i \in \Z$, \[H_i\left(\left(\Ind^G_H(C)\right)(P)\right) \cong \bigoplus_{x \in [N_G(P)\backslash G / H], P \leq {}^xH} \Ind^{N_G(P)}_{N_G(P)\cap {}^xH} \left( H_i({}^xC(P)) \right).\]

        With $P \in s_p(G)$ or $\calX_V$ (corresponding to (a) or (b)), $h_C(P) = h_{^xC}(P) = h_C({}^xP) $ for all $x \in G$, by the assumption of $G$-stability. Therefore, the right-hand side of the equation implies the nonzero homology of the will fall in exactly one nonzero degree, which is $h_C({}^gP)$ for any $g \in G$ satisfying ${}^gP \leq S.$ Thus (a) and (b) hold.

        For (c), first assume $C$ is $G$-stable. By a similar computation as before, for every $P \in s_p(G)$, $(\Ind^G_H C)(P)$ either has homology concentrated in one degree or is contractible. In particular, $(\Ind^G_H C)(P)$ is contractible whenever $P$ is not conjugate to a subgroup of $H$. By Proposition \ref{prop:equivendosplit}, $\Ind^G_H C$ is an endosplit $p$-permutation resolution, and in this case $H_0(\Ind^G_H C) \cong \Ind^G_H N$.

        Conversely, if $C$ is not $G$-stable, then there exists an $x \in G$ and $p$-subgroup $P \leq H$ for which $C(P)$ and $C(^xP)$ have homology concentrated in different degrees. In this case, the previous theorem implies that $\left(\Ind^G_H C\right)(P)$ has nonzero homology in both degrees, and therefore $\Ind^G_H C$ cannot be an endosplit $p$-permutation resolution.
    \end{proof}

    We next prove a converse of Proposition \ref{omnibusrestrictionprops}(c).

    \begin{corollary}\label{inrestrictionimage}
        Let $x = e$ or $w$, and set $D \in x\calE_k^{\Res^G_S V}(S)$. $[D] \in \im\big(\Res^G_S: x\calE_k^V(G) \to x\calE_k^{\Res^G_S V}(S)\big)$ if and only if $D$ is $G$-stable and $\Ind^G_S D$ has a weakly $V$-endotrivial (resp. $V$-endosplit-trivial) complex as a direct summand.
    \end{corollary}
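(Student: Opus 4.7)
The forward direction reads off directly from Proposition \ref{omnibusrestrictionprops}: if $[D] = \Res^G_S[C]$ for some $C \in x\calE_k^V(G)$, then part (c) gives that $C$ itself is a $V$-endotrivial summand of $\Ind^G_S D$ (via $C \mid \Ind^G_S \Res^G_S C$), and part (e) gives $\calF$-stability of the $h$-marks of $D$.

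For the converse, assume $D$ is $\calF$-stable and $E \mid \Ind^G_S D$ is a weakly $V$-endotrivial (resp.\ $V$-endosplit-trivial) summand. Setting $C := E \in x\calE_k^V(G)$, my plan is to show $[\Res^G_S C] = [D]$ in $x\calE_k^{\Res^G_S V}(S)$ via a local comparison. Since $E \mid \Ind^G_S D$, applying the Brauer construction at any $P \leq S$ gives $E(P) \mid (\Ind^G_S D)(P)$; by Corollary \ref{fusionstableinduction}(a) or (b) combined with $\calF$-stability, $(\Ind^G_S D)(P)$ has nonzero homology concentrated in the single degree $h_D(P)$. Since $E$ is $V$-endotrivial it has Sylow vertex by Proposition \ref{vtxset}, so $E(P) \not\simeq 0$, forcing $h_{\Res^G_S E}(P) = h_E(P) = h_D(P)$ for all relevant $P$.

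To upgrade this agreement of $h$-marks to an equality of classes, I would invoke the explicit decomposition of Theorem \ref{inductionbrauercommute} together with uniqueness of the $V$-endotrivial cap. In the $V$-endosplit-trivial case, Corollary \ref{fusionstableinduction}(c) realizes $\Ind^G_S D$ as an endosplit $p$-permutation resolution of $\Ind^G_S M$, where $M$ is the nonzero homology of $D$; Theorem \ref{endosplitsummands} then identifies $E$ with the endosplit $p$-permutation resolution of an indecomposable $V$-endotrivial summand $N \mid \Ind^G_S M$. Restricting, $\Res^G_S N$ is a summand of $\Res^G_S \Ind^G_S M$, which by Mackey contains $M$ as the trivial-double-coset summand; Krull--Schmidt and uniqueness of the $\Res^G_S V$-endotrivial cap then pin down the cap of $\Res^G_S N$ as that of $M$, yielding $[\Res^G_S E] = [D]$ via the homological characterization of Theorem \ref{equivdefforvendosplit}.

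The weakly $V$-endotrivial case proceeds analogously, with endotrivial module classes replaced by $1$-dimensional characters of $N_S(P)/P$ at each $P \in \calX_V$ with $P \leq S$. The main obstacle is precisely this weak case, since no global module-theoretic analogue of Corollary \ref{fusionstableinduction}(c) is available; the argument must proceed locally, tracking which Mackey component of $(\Res^G_S \Ind^G_S D)(P) = \bigoplus_{x \in [S\backslash G / S]}\Ind^S_{S \cap {}^xS}\Res^{{}^xS}_{S \cap {}^xS}({}^xD)$ contributes the $1$-dimensional cap of $(\Res^G_S E)(P)$. Here one argues that summands with $S \cap {}^xS < S$ have strictly smaller vertex and so cannot host the Sylow-vertex cap, while summands from $x \in N_G(S) \setminus S$ yield characters that, by $\calF$-stability of $D$, agree with those of $D$ after $x$-conjugation; combined with Krull--Schmidt, this forces the trivial-coset contribution $D(P)$ to provide the local character of $(\Res^G_S E)(P)$, yielding $[\Res^G_S E] = [D]$ in $\calW\calE_k^{\Res^G_S V}(S)$.
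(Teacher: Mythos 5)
Your forward direction and the first half of your converse are correct and match the paper: $\calF$-stability of $D$ plus the decomposition of Theorem \ref{inductionbrauercommute} (via Corollary \ref{fusionstableinduction}) forces the h-marks of the summand $E$ and of $D$ to agree at every $P \leq S$ where they are defined. But at that point you miss the one observation that finishes the proof immediately, and the elaborate substitute you offer does not close the argument.

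The key fact is that $S$ is a $p$-group. The kernel of the h-mark homomorphism on $x\calE_k^{\Res^G_S V}(S)$ is $T_{\Res^G_S V}(S,S)$, the group of trivial source $\Res^G_S V$-endotrivial $kS$-modules; over a $p$-group the only indecomposable trivial source module with vertex $S$ is $k$, so this kernel is trivial. Hence $[\Res^G_S E] \otimes_k [D]^{-1}$ has all h-marks zero and is therefore the identity class, i.e.\ $[\Res^G_S E] = [D]$, in both the weak and the endosplit cases simultaneously. This is exactly how the paper concludes. Your proposed replacement --- the Mackey/Green-correspondence analysis identifying which double-coset component of $(\Res^G_S \Ind^G_S D)(P)$ "hosts the cap" --- is not carried out: in the weak case you only describe what "one argues" and concede the difficulty yourself, and the vertex comparison you invoke (summands indexed by $x \notin N_G(S)$ having smaller vertex) does not by itself identify the local one-dimensional homology of $\Res^G_S E$ with that of $D$ without further work. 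As written, the converse is therefore incomplete; with the $p$-group kernel observation it becomes a two-line finish and all of the extra machinery can be deleted.
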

    \begin{proof}
        The forward direction is Proposition \ref{omnibusrestrictionprops}, (c). Suppose $\Ind^G_S D$ has a weakly $V$-endotrivial (resp. $V$-endosplit-trivial) complex as a direct summand $C$ and is $G$-stable. Since $D$ is $G$-stable, Theorem \ref{inductionbrauercommute} implies the h-marks at subgroups of $S$ for $C$ and $D$ coincide. Therefore, the h-marks of $\Res^G_S C \otimes_k D^*$ are all zero. Since $S$ is a $p$-group, $\ker (h) = \{[k]\}$ so $[\Res^G_S C] = [D] \in x\calE_k^{\Res^G_S V}(H)$, as desired.
    \end{proof}

    For (non-relatively) endotrivial complexes, the image of restriction to a Sylow $p$-subgroup can be completely described. Therefore, the question of classifying all endotrivial complexes reduces to classifying all endotrivial complexes for $p$-groups.

    \begin{theorem} \label{endotrivialrestriction}
        Let $S \in \Syl_p(G)$. Then the group homomorphism $\Res^G_S: \calE_{k}(G) \to \calE_k(S)^G$ is surjective. Moreover, we have a split exact sequence of abelian groups \[0 \to \Hom(G.k^\times) \to \calE_k(G) \xrightarrow{\Res^G_S} \calE_k(S)^G \to 0.\]
    \end{theorem}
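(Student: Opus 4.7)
The plan is to verify three things: that $\ker \Res^G_S = \Hom(G,k^\times)$, that $\Res^G_S$ is surjective onto $\calE_k(S)^\calF$, and that the sequence splits. That $\Res^G_S$ lands in $\calE_k(S)^\calF$ is automatic from the identity $C({}^gP) \cong {}^gC(P)$, which makes h-marks of endotrivial complexes $G$-conjugation invariant. For the kernel, any $\omega \in \Hom(G,k^\times)$ restricts trivially to $S$ since $k^\times$ has no nontrivial $p$-torsion, so $\omega \mapsto [k_\omega[0]]$ injects $\Hom(G,k^\times)$ into $\ker \Res^G_S$. Conversely, if $[C] \in \ker \Res^G_S$, restriction kills all h-marks at subgroups of $S$, and by $G$-conjugation invariance together with the fact that every $p$-subgroup of $G$ is $G$-conjugate into $S$, every h-mark of $C$ vanishes. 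Thus $[C]$ lies in $\ker h_e \cong T_{\{0\}}(G,S)$; since for $V=\{0\}$ the only $V$-endotrivial modules are $1$-dimensional, this group is precisely $\Hom(G,k^\times)$.

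The main obstacle is surjectivity. Given $[D] \in \calE_k(S)^\calF$, the fact that $S$ is a $p$-group forces $\Hom(S,k^\times) = 1$, so the cap of $D$ has $1$-dimensional homology concentrated in a unique degree $d := h_D(1)$; shifting, $D[-d]$ is a fusion-stable endosplit $p$-permutation resolution of the trivial module $k$. Corollary \ref{fusionstableinduction}(c) then shows $\Ind^G_S D[-d]$ is an endosplit $p$-permutation resolution of $\Ind^G_S k = k[G/S]$. The key observation is that $[G:S]$ is invertible in $k$, so $k$ is a direct summand of $k[G/S]$; writing $k[G/S] \cong k \oplus N$ and applying Proposition \ref{omnibusforendosplitpperms}(c) decomposes $\Ind^G_S D[-d]$ as $C_k \oplus C_N$ with $C_k$ an endosplit $p$-permutation resolution of $k$. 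After reshifting, $C_k[d]$ is an endotrivial direct summand of $\Ind^G_S D$, and fusion stability of $D$ combined with Corollary \ref{inrestrictionimage} yields $[D] \in \im \Res^G_S$.

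Finally, the sequence splits because $\calE_k(S)^\calF$ is free abelian: $\calE_k(S)$ has trivial torsion subgroup (since $T_{\{0\}}(S,S) = \Hom(S,k^\times) = 1$), so the h-mark homomorphism embeds $\calE_k(S)$, and a fortiori $\calE_k(S)^\calF$, into the finitely generated free abelian group $C(S,p)$. Any surjection onto a free abelian group splits, completing the argument.
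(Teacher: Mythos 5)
Your proof is correct and follows essentially the same approach as the paper's. You supply a few details the paper leaves implicit: the degree shift so that $D[-d]$ becomes a genuine endosplit $p$-permutation resolution of $k$ before applying Corollary~\ref{fusionstableinduction}(c), the appeal to Proposition~\ref{omnibusforendosplitpperms}(c) to extract the summand $C_k$, the use of Corollary~\ref{inrestrictionimage} to conclude $[D]\in\im\Res^G_S$ rather than directly arguing that $\Res^G_S C$ has $D$ as an indecomposable summand, and the explicit freeness argument for $\calE_k(S)^\calF$ to justify splitting. These are faithful elaborations, not a different route.
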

    \begin{proof}
        Let $D \in \calE_k(S)^G$ and assume without loss of generality that $D$ is indecomposable. We have $H_{h_1(D)}(D) \cong k$. Since $D$ is an endosplit $p$-permutation resolution for $k$ as $kS$-module and is $G$-stable, $\Ind^G_S D$ is an endosplit $p$-permutation resolution for $\Ind^G_S H(C) \cong k[G/S]$, and moreover, for all $P \in s_p(S)$, $h_C(P)$ is the unique value of $i$ for which $H_i((\Ind^G_S C)(P)) \neq 0$. $k$ is a direct summand of $k[G/S]$, therefore there is a direct summand $C \mid \Ind^G_H D$ which is an endosplit $p$-permutation resolution for $k$ as $kG$-module, and whose h-marks coincide with those of $D$. Therefore, $\Res^G_S C$ has an indecomposable direct summand isomorphic to $D$.

        For the last statement, $\ker(\Res^G_S)$ consists of all endotrivial complexes with h-marks all zero. However, this is simply $\ker(h)$, which is the torsion subgroup of $\calE_k(G)$, isomorphic to $\Hom(G,k^\times)$. Therefore, the inclusion $\Hom(G,k^\times) \to \calE_k(G)$ splits with retraction $\calE_k(G) \to \Hom(G,k^\times)$ given by $[C] \mapsto H_{h_C(1)}(C) \in \Hom(G,k^\times)$.
    \end{proof}

    \subsection{Restriction for $V$-endosplit-trivial complexes}

    We next consider the case of restriction for $V$-endosplit-trivial complexes, and show that essentially, the question of restriction reduces to the question of restriction for relatively endotrivial modules. The following definition extends the usual notion of being a $G$-stable endopermutation module (see \cite[Section 9.9]{L182}).

    \begin{definition}
        Let $S \in \Syl_p(G)$. For any $H \geq S$, we say a $\Res^G_H V$-endotrivial $kH$-module is $G$-stable if for all $x \in G$, \[[\Res^H_{H\cap {}^xH} M] = [(\Res^{^xH}_{H\cap {}^xH} \circ c_x) (M)] \in T_{k, \Res^G_{H\cap{}^xH}V}(H\cap {}^xH)\] where $c_x$ denotes conjugation by $x$. Denote the subgroup of $T_{k, \Res^G_H V}(H)$ consisting of $G$-stable elements by $T_{k, \Res^G_H V}(H)^{G-st}$.

        If $H \leq G$, then $N_G(H)/H$ has a natural action on $T_{k, \Res^G_H V}(H)$ via conjugation. It is a straightforward verification that $T_{k, \Res^G_S V}(S)^{N_G(H)} = T_{k, \Res^G_S V}(S)^{N_G(H)-st}$, therefore \[T_{k, \Res^G_S V}(S)^{G-st} \leq T_{k, \Res^G_S V}(S)^{N_G(H)-st} = T_{k, \Res^G_S V}(S)^{N_G(H)}.\]

        Since for any $H \leq G$, $x \in G$, and $kG$-module $M$, \[(\Res^H_{H\cap {}^xH} \circ \Res^G_H) (M) = (\Res^{^xH}_{H\cap {}^xH} \circ c_x \circ \Res^G_H) (M),\]
        the image of $\Res^G_H: T_{k,V}(G) \to T_{k, \Res^G_H V}(H)$ is contained in $T_{k, \Res^G_H(V)}(H)^{G-st}$ when $H \geq S \in \Syl_p(G)$.

        Finally, we denote the subgroup of $T_{k,V}(G)$ consisting of endotrivial modules that have endosplit $p$-permutation resolutions by $eT_{k,V}(G)$. Recall that any $V$-endotrivial module belonging to $eT_{k,V}(G)$ must be endo-$p$-permutation.
    \end{definition}

    We break up our restriction into two parts: restriction to $N_G(S)$ and restriction to $S$. Recall that from Proposition \ref{omnibusrestrictionprops} (b), if $N_G(S) \leq H\leq G$, then $\Res^G_H: e\calE_k^V(G) \to e\calE_k^{\Res^G_H V}(H)$ is injective.

    \begin{theorem}\label{endosplitrestrictiontonormalizer}
        Let $N_G(S) \leq H \leq G$. Let $N$ be an indecomposable $\Res^G_H V$-endotrivial $kH$-module with indecomposable $G$-stable endosplit $p$-permutation resolution $D$. Then $[D] \in e\calE_k^{\Res^G_H V}(H)^G$ is in $\im\big(\Res^G_H: e\calE_k^{V}(G) \to e\calE_k^{\Res^G_H V}(H)^{G}\big)$ if and only if the Green correspondent of $N$ is $V$-endotrivial. If this occurs, the Green correspondent of $D$ is an endosplit $p$-permutation resolution of the Green correspondent of $N$, and has the same h-marks.

        In particular, if the Green correspondence induces an isomorphism $eT_{k,V}(G) \cong eT_{k, \Res^G_H V}(H)^{G-st}$, it induces an isomorphism $e\calE_k^V(G) \to e\calE_k^{\Res^G_H V}(H)^G$. If $H$ controls fusion in $G$, the converse holds.

    \end{theorem}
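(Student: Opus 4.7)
The plan is to establish the main equivalence first, then bootstrap to the two consequences. For the forward direction of the iff, suppose $[D] = [\Res^G_H C]$ for some $C \in e\calE_k^V(G)$; choose $C$ indecomposable, i.e., an endosplit $p$-permutation resolution of an indecomposable $V$-endotrivial $kG$-module $M$. By the structural proposition on $V$-endotrivial modules in Section 5, $\Res^G_H M$ splits as $N' \oplus W$ with $N'$ the unique indecomposable $\Res^G_H V$-endotrivial summand (of Sylow vertex, hence the Green correspondent of $M$) and $W$ relatively $\Res^G_H V$-projective. Proposition~\ref{omnibusforendosplitpperms}(c) correspondingly splits $\Res^G_H C$ as $C_{N'} \oplus C_W$, and Proposition~\ref{endotrivialsummand} identifies $C_{N'}$ as the unique $\Res^G_H V$-endosplit-trivial direct summand. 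Since $[C_{N'}] = [D]$ with both indecomposable, the vertex-multiplicity-preserving bijection from Theorem~\ref{endosplitsummands} forces $N' \cong N$, so the Green correspondent of $N$ is $M$, which is $V$-endotrivial.

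For the reverse direction, let $M$ denote the Green correspondent of $N$ and assume it is $V$-endotrivial. Since $D$ is $\calF$-stable, Corollary~\ref{fusionstableinduction}(c) gives that $\Ind^G_H D$ is an endosplit $p$-permutation resolution of $\Ind^G_H N$, which contains $M$ as a direct summand by the module Green correspondence. Proposition~\ref{omnibusforendosplitpperms}(c) then extracts a direct summand $C \mid \Ind^G_H D$ that is an endosplit $p$-permutation resolution of $M$; since $M$ is $V$-endotrivial, $C \in e\calE_k^V(G)$. Running the same cap analysis on $\Res^G_H C$ identifies its $\Res^G_H V$-endosplit-trivial summand as the endosplit $p$-permutation resolution of $N$, which is isomorphic to $D$ by Theorem~\ref{endosplitsummands}, so $[\Res^G_H C] = [D]$ in $e\calE_k^{\Res^G_H V}(H)$.

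For the ``in particular'' statement, injectivity of $\Res^G_H$ is Theorem~\ref{restrictioninjective}. For surjectivity, the central technical step is to show that $\calF$-stability of $D$ implies $G$-stability of its cap $N$. For each $x \in G$, writing $Q = H \cap {}^xH$, one exhibits $\Res^H_Q D$ and $\Res^{{}^xH}_Q c_x D$ as two endosplit $p$-permutation resolutions of $\Res^H_Q N$ and $\Res^{{}^xH}_Q c_x N$ respectively with identical h-marks (by $\calF$-stability, combined with $h_{c_x D}(P) = h_D({}^{x^{-1}}P)$), then applies Theorem~\ref{sumsofendosplitpperms} and its remark to the tensor-dual to conclude that $\Res^H_Q N \otimes_k (\Res^{{}^xH}_Q c_x N)^*$ is $p$-permutation. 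Combined with the Sylow-vertex structure of its indecomposable endotrivial cap, this yields $[\Res^H_Q N] = [\Res^{{}^xH}_Q c_x N]$ in $T_{\Res^G_H V}(Q)$. Once $N \in eT_{\Res^G_H V}(H)^{G-st}$ is secured, the hypothesized isomorphism produces $M$ (the Green correspondent of $N$) in $eT_V(G)$, and the reverse direction of the main iff yields $C \in e\calE_k^V(G)$ with $[\Res^G_H C] = [D]$.

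For the converse under $H$ controlling fusion in $G$: here $H$-conjugation agrees with $G$-conjugation on $p$-subgroups of $H$, so every endosplit $p$-permutation resolution over $H$ is automatically $\calF$-stable. Given $N \in eT_{\Res^G_H V}(H)^{G-st}$ with resolution $D$, surjectivity of $\Res^G_H$ yields $C \in e\calE_k^V(G)$ with $[\Res^G_H C] = [D]$, and the forward direction identifies the homology cap of $C$ as the Green correspondent of $N$, which is therefore $V$-endotrivial, giving Green correspondence surjectivity on $eT$'s; injectivity is the classical module Green correspondence. The principal obstacle throughout is the $\calF$-stable to $G$-stable passage for $D$'s cap, which requires tightening the $p$-permutation condition coming from Theorem~\ref{sumsofendosplitpperms} into a genuine identity in $T_{\Res^G_H V}(Q)$ rather than just an equality up to a torsion trivial-source summand.
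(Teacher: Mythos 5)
Your proof of the main equivalence is correct, but note that for the forward direction you take a genuinely different route from the paper. The paper argues via induction: it shows $\Ind^G_H D$ has a $V$-endotrivial summand $C$, identifies $C$ as the chain-complex Green correspondent of $D$, and then reads off from Theorem~\ref{endosplitsummands} that $C$ resolves a vertex-$S$ summand of $\Ind^G_H N$, forcing that summand to be the (now $V$-endotrivial) Green correspondent of $N$. You instead work via restriction: decompose $\Res^G_H M$ as $N' \oplus W$ using the structure theory of $V$-endotrivial modules, decompose $\Res^G_H C$ correspondingly via Proposition~\ref{omnibusforendosplitpperms}(c), and match the cap $C_{N'}$ with $D$ to conclude $N' \cong N$. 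Both are valid; your version avoids the slight circularity in the paper's phrasing (where $M$ is called $V$-endotrivial before that is strictly established) and makes the Krull--Schmidt matching explicit. Your reverse direction and the converse under fusion control essentially coincide with the paper's.

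For the ``in particular'' statement, however, you expose a genuine gap that the paper passes over silently when it writes that the forward direction ``follows from the rest of the theorem.'' Surjectivity of $\Res^G_H$ onto $e\calE_k^{\Res^G_H V}(H)^\calF$ requires knowing that the homology cap $N$ of a fusion-stable $D$ lies in $eT_{\Res^G_H V}(H)^{G\text{-st}}$; only then does the hypothesized isomorphism $eT_V(G) \cong eT_{\Res^G_H V}(H)^{G\text{-st}}$ produce a $V$-endotrivial Green correspondent. Your argument shows, via equal h-marks and the remark after Theorem~\ref{sumsofendosplitpperms}, that $\Res^H_Q N \otimes_k (\Res^{{}^xH}_Q c_x N)^*$ is a $p$-permutation module; but as you correctly observe, this pins $[\Res^H_Q N]$ and $[\Res^{{}^xH}_Q c_x N]$ down only up to an element of the torsion trivial-source subgroup $T_{\Res^G_Q V}(Q, S_Q)$, not up to equality in $T_{\Res^G_Q V}(Q)$. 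The cap of that $p$-permutation tensor could a priori be a nontrivial trivial-source $\Res^G_Q V$-endotrivial $kQ$-module (e.g.\ one-dimensional but non-trivial), so the $\calF$-stable $\Rightarrow$ $G$-stable passage is not yet closed. You flag this honestly; the paper's own proof does not address it, and the paper's remark following the theorem only discusses the converse inclusion $eT^{G\text{-st}} \subseteq eT^\calF$. So your proposal has a gap in exactly the spot where the paper's argument is also incomplete -- it is not a defect peculiar to your approach, but neither have you resolved it.
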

    \begin{proof}
        First, suppose the Green correspondent of $N$ is $V$-endotrivial, and denote it by $M$. Since $M \mid \Ind^G_H N$, and $\Ind^G_H D$ is an endosplit $p$-permutation resolution for $\Ind^G_H M$, there exists a unique direct summand of $\Ind^G_H D$ which is an indecomposable endosplit $p$-permutation resolution for $M$, which we denote by $C$. Moreover, since $M$ has vertex $S$, it follows that its indecomposable endosplit $p$-permutation resolution must have vertex $S$ as chain complex. Hence, $C$ is the indecomposable endosplit $p$-permutation resolution of $M$, and it follows by the Green correspondence that $[\Res^G_H C] = [D] \in e\calE_k^{\Res^G_H V}(H)$.

        Now, suppose $[D] \in \im\big(\Res^G_H: e\calE_k^{V}(G) \to e\calE_k^{\Res^G_H V}(H)^{G}\big)$. Therefore, $\Ind^G_H D$ has a $V$-endotrivial complex as a direct summand. Since $V$-endotrivial complexes have vertex $S$, it follows that the Green correspondent of $D$ is the $V$-endotrivial direct summand. Call the Green correspondent $C$, then since $D$ is an endosplit $p$-permutation resolution, $\Ind^G_H D$ is an endosplit $p$-permutation resolution for $\Ind^G_H N$, and $C$ is an endosplit $p$-permutation resolution for some summand of $\Ind^G_H N$ by Proposition \ref{omnibusrestrictionprops}. We claim that $C$ is an endosplit $p$-permutation resolution for the Green correspondent of $N$, which we denote by $M$. Indeed, $M$ is a $V$-endotrivial $kG$-module, hence it has vertex $S$, and it has an endosplit $p$-permutation resolution since it a summand of $\Ind^G_H N$. Therefore, its indecomposable endosplit $p$-permutation resolution must have vertex $S$ as well, so the resolution must be $C$.

        The forward implication of the final statement follows from the rest of the theorem, noting that $\Res^G_H$ is injective by the Green correspondence. If $H$ controls the fusion of $S$ in $G$, every endosplit $p$-permutation resolution of a $V$-endotrivial $kH$-module is $G$-stable, and the converse follows.
    \end{proof}

    The main obstruction to the converse statement holding in general is the following. It is unknown if every element of $eT_{k, \Res^G_H V}(H)^{G-st}$ has an endosplit $p$-permutation resolution which is $G$-stable.

    \begin{corollary}\label{endotrivialrestrictiontonormalizer}
        Let $N_G(S) \leq H \leq G$. $\Res^G_H \calE_k(G) \to \calE_k(H)$ is injective. It is an isomorphism if and only if $\Res^G_H: \Hom(G,k^\times) \to \Hom(H, k^\times)$ is an isomorphism, or equivalently, if the Green correspondent of any $k$-dimension one $kH$-module has $k$-dimension one.
    \end{corollary}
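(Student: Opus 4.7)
The plan is to deduce this corollary from Theorems~\ref{restrictioninjective} and~\ref{endosplitrestrictiontonormalizer} specialized to $V = 0$. In that case, a $V$-endotrivial module has $\End_k(M) \cong k$, forcing $M$ to be one-dimensional; each $k_\omega$ is trivially its own endosplit $p$-permutation resolution, so $eT_0(G) = \Hom(G, k^\times)$ and $eT_0(H) = \Hom(H, k^\times)$. The injectivity of $\Res^G_H \colon \calE_k(G) \to \calE_k(H)$ is then immediate from Theorem~\ref{restrictioninjective}, and restricting to torsion subgroups (which by Theorem~\ref{endotrivialrestriction} are precisely the Hom-groups) yields injectivity of $\Res^G_H \colon \Hom(G, k^\times) \to \Hom(H, k^\times)$.

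Next I would establish the equivalence of the two named conditions for the map to be an isomorphism. Since $\Hom(S, k^\times) = 0$ ($S$ is a $p$-group and $k^\times$ is $p$-torsion-free), each $\omega \in \Hom(H, k^\times)$ restricts trivially to $S$, and a direct Brauer-construction computation then gives $k_\omega(S) \cong k \neq 0$; hence $k_\omega$ has vertex $S$ in $H$ and the Green correspondence applies. Since any one-dimensional $kG$-module restricts to an indecomposable one-dimensional $kH$-module of vertex $S$, the Green correspondence coincides with restriction on one-dimensional modules; thus the Green correspondent of $k_\omega$ is one-dimensional if and only if $\omega$ extends to some $\tilde\omega \in \Hom(G, k^\times)$. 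Together with injectivity of the Hom-restriction, this establishes the stated equivalence between conditions on the Hom-map and on Green correspondents.

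Finally, for the main biconditional: an isomorphism $\calE_k(G) \xrightarrow{\sim} \calE_k(H)$ immediately restricts to an isomorphism of torsion subgroups, giving one direction. For the converse, assume Hom-restriction is an isomorphism; then any $\omega \in \Hom(H, k^\times)$ extends to a class function $\tilde\omega$ on $G$ and is automatically $G$-stable, so $eT_0(H)^{G\text{-st}} = \Hom(H, k^\times)$, and the ``in particular'' clause of Theorem~\ref{endosplitrestrictiontonormalizer} supplies an isomorphism $\Res^G_H \colon \calE_k(G) \xrightarrow{\sim} \calE_k(H)^\calF$. The main obstacle will be the final step of upgrading this to an isomorphism onto all of $\calE_k(H)$: this reduces to showing that the $G$-fusion-stable and $H$-fusion-stable subgroups of $\calE_k(S)$ coincide under the Hom-isomorphism hypothesis, which I plan to obtain via a five-lemma comparison of the split exact sequences furnished by Theorem~\ref{endotrivialrestriction} applied to $G$ and to $H$, whose outer terms match by hypothesis and by the common restriction to $\calE_k(S)$.
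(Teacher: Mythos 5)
Your overall route is the paper's: injectivity comes from Theorem~\ref{restrictioninjective}, the identification $eT_0(G)=\Hom(G,k^\times)$ (one\-/dimensionality of $0$-endotrivial modules) feeds into Theorem~\ref{endosplitrestrictiontonormalizer}, and the equivalence of the two stated conditions is exactly the Green-correspondence-versus-extension argument for $k_\omega$ that the paper leaves implicit. All of that is correct and well executed; in particular your observation that $k_\omega$ is its own endosplit $p$-permutation resolution and that the Green correspondent of $k_\omega$ is one-dimensional precisely when $\omega$ extends to $G$ is the right way to make the paper's two-line proof precise.

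The genuine problem is your final step. Theorem~\ref{endosplitrestrictiontonormalizer} only produces an isomorphism onto $\calE_k(H)^\calF$, and you correctly sense that passing to all of $\calE_k(H)$ requires $\calE_k(H)^\calF=\calE_k(H)$, equivalently $\calE_k(S)^{\calF_S(G)}=\calE_k(S)^{\calF_S(H)}$. But the five-lemma comparison you propose is circular: in the two split exact sequences from Theorem~\ref{endotrivialrestriction}, the left-hand vertical map is an isomorphism by hypothesis, while the right-hand vertical map is the \emph{inclusion} $\calE_k(S)^{\calF_S(G)}\hookrightarrow\calE_k(S)^{\calF_S(H)}$ (a $G$-fusion-stable class function on $s_p(S)$ is $H$-fusion-stable, not conversely, since $H\geq N_G(S)$ need not control fusion in $G$). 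Saying the outer terms ``match by the common restriction to $\calE_k(S)$'' is precisely assuming that this inclusion is surjective, which is the statement you are trying to deduce; the Hom-isomorphism hypothesis gives no leverage on it. For what it is worth, the paper's own proof does not perform this upgrade either --- it simply cites Theorem~\ref{endosplitrestrictiontonormalizer}, whose target is $\calE_k(H)^\calF$ --- so the honest conclusion available from either argument is an isomorphism onto the $G$-fusion-stable subgroup $\calE_k(H)^\calF$; identifying this with all of $\calE_k(H)$ needs a separate argument (or a restriction on $H$) that neither you nor the paper supplies.
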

    \begin{proof}
        The first statement follows from Proposition \ref{omnibusrestrictionprops} (b). The latter follows from the identification $T_{k, \{0\}}(G) = \Hom(G,k^\times)$ and Theorem \ref{endosplitrestrictiontonormalizer}.
    \end{proof}

    We next examine the case in which $S \trianglelefteq P$, and consider restriction to $S$. In the non-relative endotrivial module setting, there is an isomorphism $TF(S)^{G/S} \cong TF(G)$, where $TF$ denotes the torsion-free part of $T(G)$, originally shown by Dade and further studied by Mazza in \cite{Ma07}. The original statement, given in an unpublished manuscript of Dade, is as follows.

    \begin{theorem}{\cite[Theorem 7.1]{Da82}}\label{dadethm}
        Let $S\in \Syl_p(G)$ with $S \trianglelefteq G$ and let $M$ be an endopermutation $kS$-module. Then $M$ extends to an endopermutation $kG$-module $N$ (i.e. there exists an endopermutation module $N$ with $\Res^G_S N \cong M$) if and only if $M$ is $G$-stable, i.e. ${}^gM \cong M$ for all $g \in G$.
    \end{theorem}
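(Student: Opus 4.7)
The plan is to prove the two implications separately, with the forward direction being routine and the reverse direction carrying the real content.

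For the forward direction, suppose $N$ is a $kG$-module with $\Res^G_S N \cong M$. For each $g \in G$, the $k$-linear map $M \to {}^g M$ sending $n \mapsto g^{-1} n$ (using the ambient $G$-action on $N$) is a $kS$-linear isomorphism: normality $S \trianglelefteq G$ gives $g^{-1}(sn) = (g^{-1}sg)(g^{-1}n) = {}^g s \cdot (g^{-1}n)$ for all $s \in S$ and $n \in N$, so this map intertwines the conjugated $S$-action on ${}^gM$ with the original action on $M$. Its inverse is multiplication by $g$.

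For the reverse direction, assume $M$ is $G$-stable. Passing to the cap, we may assume $M$ is indecomposable, so $A := \End_{kS}(M)$ is a local ring with Jacobson radical $J$. The $G$-stability hypothesis supplies, for each $g \in G$, a $k$-linear automorphism $\phi_g$ of $M$ satisfying the twisted equivariance $\phi_g(sm) = (gsg^{-1})\phi_g(m)$; we normalize by taking $\phi_s$ to be the $S$-action for $s \in S$. A direct calculation shows that $c(g,h) := \phi_{gh}^{-1}\phi_g\phi_h$ is $kS$-linear, hence lies in $A^\times$, and reducing modulo $J$ defines a $2$-cocycle $\bar c$ on $G/S$ valued in $(A/J)^\times$.

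The main obstacle is showing $[\bar c] = 0$ in $H^2(G/S, (A/J)^\times)$. Standard group cohomology annihilates this group by $[G:S]$, which is coprime to $p$; since $A/J$ is the residue division ring of a local endomorphism algebra over $k$, multiplication by $[G:S]$ is surjective on $(A/J)^\times$ once $k$ contains sufficiently many roots of unity, and the general case reduces to this by Galois descent. Once the cocycle is trivialized, lifting a cobounding cochain back through the surjection $A^\times \twoheadrightarrow (A/J)^\times$ (possible because $J$ is nilpotent) and rescaling the $\phi_g$ accordingly produces a strict homomorphism $G \to \GL_k(M)$ extending the $S$-action, hence a $kG$-module $N$ with $\Res^G_S N \cong M$.

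It remains to verify that $N$ is endopermutation. Writing $M^* \otimes_k M \cong k \oplus Q$ with $Q$ a permutation $kS$-module, the $G$-action on $\End_k(M)$ induced by conjugation via the $\phi_g$ extends the $S$-action, splits off the canonical trivial summand $k = k \cdot \id_M$, and permutes the $S$-orbits that form a permutation basis of $Q$, yielding a $G$-permutation basis. Thus $\End_k(N)$ is a permutation $kG$-module, so $N$ is endopermutation as required.
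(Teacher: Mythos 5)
The paper does not give a proof of this statement; it is quoted from Dade's unpublished manuscript and used as a black box in the subsequent section on restriction to a normal Sylow subgroup, so the comparison is between your proposal and the genuine content of Dade's result.

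Your forward direction is correct and standard. In the reverse direction, the reduction to the cap and the construction of the obstruction cocycle $\bar c \in H^2(G/S,(A/J)^\times)$ are also on solid ground. The gap is at the decisive step, the claim that $[\bar c]=0$. You argue that $H^2(G/S,-)$ is killed by $[G:S]$ and that multiplication by $[G:S]$ is surjective on $(A/J)^\times$, and you then conclude the class vanishes. This deduction is invalid: divisibility of the coefficient module does not transfer to surjectivity of multiplication-by-$n$ on the cohomology group when the coefficients have $n$-torsion, and $k^\times$ always has $p'$-torsion (the roots of unity). Concretely, for $k$ algebraically closed of characteristic $p\neq 2$ and $G/S\cong(\Z/2)^2$ acting trivially, one has $H^2(G/S,k^\times)\cong\Hom\bigl(H_2(G/S,\Z),k^\times\bigr)\cong\Z/2\neq 0$, even though $k^\times$ is divisible. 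So the cohomology group in which the obstruction lives is genuinely nonzero in general, and nothing you have said explains why the particular class $[\bar c]$ arising from an endopermutation module is a coboundary. That vanishing is precisely the content of Dade's theorem; it uses the endopermutation structure in an essential way (via the Dade $S$-algebra $\End_k(M)$, its Brauer quotient, and the permutation-basis rigidity that pins down a distinguished choice of $\phi_g$), and cannot be obtained from a bare transfer argument. A secondary, smaller issue is the closing paragraph: having $\Res^G_S\End_k(N)$ a permutation $kS$-module only makes $\End_k(N)$ a $p$-permutation $kG$-module, and your assertion that the $G$-action "permutes the $S$-orbits, yielding a $G$-permutation basis" needs an argument; it is not automatic that a $G$-stable $S$-permutation basis can be replaced by a $G$-permutation basis.
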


    \begin{remark}
        Let $V$ be an absolutely $p$-divisible $p$-permutation $kG$-module, and $M$ a $V$-endotrivial $kG$-module with an endosplit $p$-permutation resolution $C$. Then $C^* \otimes_k C \simeq (M^* \otimes_k M)[0] \cong (k \oplus P)[0]$, where $P$ is a $V$-projective module. Moreover, since $C$ is a chain complex of $p$-permutation modules, $P$ is $p$-permutation. Therefore, $M$ is an endo-$p$-permutation module, and $\Res^G_S M$ is an endopermutation module.

        Conversely, suppose $N$ is a $G$-stable $\Res^G_S V$-endotrivial $kS$-module which is endopermutation. By Dade's lifting theorem  \ref{dadethm}, there exists a $kG$-module $M$ for which $\Res^G_S M \cong N$. By \cite[Proposition 4.1.2]{CL11}, since $S \trianglelefteq G$, $M$ is $V$-endotrivial if and only if $\Res^G_S M$ is $\Res^G_S V$-endotrivial, so $M$ is $V$-endotrivial as well.

        Putting this together, we obtain the following results for endo-$p$-permutation $V$-endotrivial modules and $V$-endosplit-trivial complexes.
    \end{remark}

    \begin{prop}
        Suppose $S \in \Syl_p(G)$ is normal in $G$. Then $\Res^G_S: eT_{k,V}(G)\to eT_{k, \Res^G_S V}(S)^{G-st}$ is surjective.
    \end{prop}
    \begin{proof}
        This follows from the arguments in the previous remark, as every endo-$p$-permutation $kS$-module is endopermutation.
    \end{proof}

    \begin{theorem}\label{endosplitnormalrestriction}
        Suppose $S \in \Syl_p(G)$ is normal in $G$. Then $\Res^G_S: e\calE_k^V(G) \to e\calE_k^{\Res^G_S V}(G)^G$ is surjective.
    \end{theorem}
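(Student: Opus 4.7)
The plan is to reduce the surjectivity to the module-level statement by proving that fusion stability of a complex forces the underlying homology module to be $G$-stable, then invoking Dade's lifting theorem \ref{dadethm} together with Proposition \ref{omnibusforendosplitpperms}(c) to produce the complex-level lift. Let $[D] \in e\calE_k^{\Res^G_S V}(S)^\calF$, and choose an indecomposable representative $D \in Ch^b({}_{kS}\triv)$. Its unique nonzero homology $N := H_{h_D(1)}(D)$ is a $\Res^G_S V$-endotrivial $kS$-module. Since $D$ is $\Res^G_S V$-endosplit-trivial, $D^* \otimes_k D \simeq (N^* \otimes_k N)[0] \cong (k \oplus P)[0]$ for some $p$-permutation $\Res^G_S V$-projective module $P$, so $N$ is endo-$p$-permutation; because $S$ is a $p$-group, $N$ is in fact endopermutation.

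The crux is showing that $N$ is $G$-stable. For any $g \in G$, the conjugate ${}^gD$ is an indecomposable $\Res^G_S V$-endosplit-trivial $kS$-complex with homology ${}^gN$, and fusion stability of $D$ gives $h_{{}^gD}(P) = h_D({}^{g^{-1}}P) = h_D(P)$ for every $P \in s_p(S)$. Consequently $D \otimes_k ({}^gD)^*$ has all h-marks zero and lies in $\ker h_e = T_{\Res^G_S V}(S,S)$. Since $k$ has characteristic $p$, the Frobenius argument gives $\Hom(S,k^\times) = 1$ for the $p$-group $S$, which forces $T_{\Res^G_S V}(S,S) = \{[k]\}$. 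Thus $[D] = [{}^gD]$ in $e\calE_k^{\Res^G_S V}(S)$, and since both $D$ and ${}^gD$ are indecomposable $V$-endosplit-trivial complexes they are their own caps; Proposition \ref{endotrivialsummand} together with Remark \ref{equivdefinitionofgroups} then forces $D \cong {}^gD$ in $Ch^b({}_{kS}\triv)$, and taking homology yields ${}^gN \cong N$.

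With $N$ shown to be $G$-stable, Dade's theorem \ref{dadethm} extends $N$ to an endopermutation $kG$-module $M$ with $\Res^G_S M \cong N$; since $S \trianglelefteq G$, \cite[Proposition 4.1.2]{CL11} promotes $M$ to a $V$-endotrivial $kG$-module. To obtain the chain-complex lift, observe that Corollary \ref{fusionstableinduction}(c) shows $\Ind^G_S D$ is an endosplit $p$-permutation resolution of $\Ind^G_S N$, while $M \mid \Ind^G_S \Res^G_S M = \Ind^G_S N$ holds because $S$ is Sylow. Proposition \ref{omnibusforendosplitpperms}(c) then produces a direct summand $C \mid \Ind^G_S D$ which is an endosplit $p$-permutation resolution for $M$; since $M$ is $V$-endotrivial, $C$ is $V$-endosplit-trivial. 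Finally, Corollary \ref{inrestrictionimage} yields $[D] \in \im \Res^G_S$, since $D$ is fusion stable and $\Ind^G_S D$ admits the $V$-endosplit-trivial summand $C$. I expect the main obstacle to be the passage from fusion stability of $D$ to $G$-stability of $N$, which relies essentially on $\Hom(S,k^\times) = 1$; once this is in hand, the rest reduces cleanly to the module-level theorem and standard summand extraction.
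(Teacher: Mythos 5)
Your proof is correct and follows the same overall strategy as the paper: extract the homology module $N$, show it is endopermutation, lift it to a $V$-endotrivial $kG$-module $M$ via Dade's theorem and \cite[Proposition 4.1.2]{CL11}, and then extract from $\Ind^G_S D$ the endosplit $p$-permutation resolution of $M$ via Corollary \ref{fusionstableinduction} and Proposition \ref{omnibusforendosplitpperms}(c), concluding with Corollary \ref{inrestrictionimage}.

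Where your write-up goes beyond the paper is the careful verification that fusion stability of the complex $D$ (a condition on h-marks) implies $G$-stability of the homology module $N$ (a condition on the module). The paper's proof simply appeals to the preceding remark, which presupposes $G$-stability of $N$, without explicitly checking it. Your argument closes this small gap: fusion stability plus normality of $S$ gives $h_{{}^gD} = h_D$ for all $g \in G$, so $[D] - [{}^gD] \in \ker h_e$; since $\ker h_e \cong T_{\Res^G_S V}(S,S)$ is trivial for the $p$-group $S$ (either because $\Res^S_S$ is the identity, or because $\Hom(S,k^\times)=1$), and both $D$ and ${}^gD$ are indecomposable hence their own caps, one concludes $D \cong {}^gD$ and therefore ${}^gN \cong N$. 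This is exactly the right way to bridge from complex-level fusion stability to module-level $G$-stability, and it is a genuine clarification worth making explicit.
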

    \begin{proof}
        Suppose $C$ is a $G$-stable $\Res^G_S V$-endosplit-trivial complex of $kS$-modules, and without loss of generality assume $C$ is indecomposable. It suffices by Theorem \ref{inrestrictionimage} to show that $\Ind^G_S C$ has a $V$-endosplit-trivial summand. Set $H_{h(C)}(C) \cong N$, then $N$ is an indecomposable $\Res^G_S V$-endotrivial $kS$-module.

        By Theorem \ref{fusionstableinduction}, $\Ind^G_S C$ is an endosplit $p$-permutation resolution for $\Ind^G_S N$. Furthermore, from the above remark, $N$ has a $V$-endotrivial $kG$-module lift $M$, so we may write $N \cong \Res^G_S M$. By relative projectivity, $M \mid \Ind^G_S\Res^G_S M \cong \Ind^G_S N$, and by properties of endosplit $p$-permutation resolutions, there is a corresponding direct summand of $\Ind^G_S C$ which is an endosplit $p$-permutation resolution of $M$, which is necessarily $V$-endosplit-trivial, as desired.
    \end{proof}

    \subsection{Restriction for weakly $V$-endotrivial complexes}

    We study the more general case of restriction for weakly $V$-endotrivial complexes. In this case, we still have the Green correspondence but have less control over its behavior.

    \begin{theorem}
        Let $S \in \Syl_p(G),$ $N_G(S) \leq L \leq G$, and let $V$ be an absolutely $p$-divisible $p$-permutation $kG$-module. Suppose $D$ is a $G$-stable indecomposable weakly $V$-endotrivial $kL$-complex, and let $C$ denote its Green correspondent. If for all $P \in \calX_V$, $C(P)$ has exactly one noncontractible summand, then $[D]\in \im\big (\Res^G_L: w\calE^V_k(G) \to w\calE^{\Res^G_L V}_k(L)^G\big)$.
    \end{theorem}

    \begin{proof}
         It suffices to show that $C$ is a weakly $V$-endotrivial complex, since from this it will follow by the Green correspondence that $[D] = [\Res^G_L C]$. It suffices to show for all $P \in \calX_V$ that $C(P)$ is an endotrivial $k[N_G(P)/P]$-complex. It follows from Corollary \ref{fusionstableinduction} that $(\Ind^G_L D)(P)$ is an endosplit $p$-permutation resolution, therefore any noncontractible summand is an endosplit $p$-permutation resolution as well. In particular, since $C(P)$ is noncontractible, it is an endosplit $p$-permutation resolution. It suffices to show that $C(P)$ has non-zero homology with $k$-dimension one.

        The nonzero homology of $(\Ind^G_L D)(P)$ is given by \[\calH\big((\Ind^G_L D)(P)\big) = \bigoplus_{x \in [N_G(P)\backslash G / L], P \leq {}^xL} \Ind^{N_G(P)}_{N_G(P)\cap {}^xL}(\calH({}^xD(P))),\] where $\calH$ denotes the degree in which the nonzero homology occurs. Note that $\dim_k \calH({}^xD(P)) = 1$ since $D$ is weakly $\Res^G_L V$-endotrivial.

        There exists some $x \in G$ for which $p \nmid [N_G(P) : N_G(P) \cap {}^xL]$. Indeed, if $Q \in \Syl_p(N_G(P))$, there exists $x \in G$ such that $Q \leq {}^xS$ and $S \leq N_G(S) \leq L$. Therefore, $\calH\big((\Ind^G_L D)(P)\big)$ contains a 1-dimensional $k[N_G(P)/P]$-module, say $\chi$, as a direct summand. Since $(\Ind^G_L D)(P)$ is an endosplit $p$-permutation resolution, there exists a unique corresponding indecomposable endosplit $p$-permutation resolution for $\chi$ which is a direct summand of $(\Ind^G_L D)(P)$, i.e. an indecomposable endotrivial complex. Suppose $E'$ is the unique endotrivial $k[N_G(P)/P]$-complex which is a direct summand of $(\Ind^G_L D)(P)$. Then, there exists a corresponding unique indecomposable direct summand $E$ of $\Ind^G_L D$ such that $E'$ is a summand of $E(P)$.

        Now, we claim $E = C$. It suffices to show that $E$ has vertex $S$. We have that \[(E^*\otimes_k E)(P)\cong E(P)^* \otimes_k E(P) \simeq k[0].\] Therefore by the previous lemma, $E^*\otimes_k E$ is homotopy equivalent to a complex $F$ with $F_0$ containing a direct summand with vertex $S$. It follows that $E$ must also have vertex $S$, but since there is a unique summand of $\Ind^G_L D$ with vertex $S$ by the Green correspondence, $E = C$.

        By hypothesis, the only noncontractible summand of $C(P)$ is $E'$. Therefore, $C(P)$ is an endotrivial $k[N_G(P)/P]$-complex for all $P \in \calX_V$, and we conclude that $C$ is weakly $V$-endotrivial.

    \end{proof}

    We can also extend an argument of Lassueur in \cite{CL11} which determines that the Green correspondence induces an isomorphism when ${}_{kG}\catmod(V)$ is sufficiently large.

    \begin{theorem}\label{sufficientlylargeX}
        Let $N_G(S)\leq H \leq G$ and suppose ${}_{kH}\catmod(\Res^G_H V) \supseteq {}_{kH}\catmod(\mathfrak{X})$, where $\mathfrak{X}$ is the set of subgroups from \cite[Theorem 5.2.1(ii)]{L181},
        \[\mathfrak{X} = \{gSg\inv \cap S \mid g \in G\setminus H\}.\]
        Then $\Res^G_H: x\calE^V_k(G) \to x\calE^{\Res^G_H V}_k(H)^G$ is an isomorphism.
    \end{theorem}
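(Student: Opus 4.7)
My plan is to build the inverse of $\Res^G_H$ via the Green correspondence for chain complexes (Theorem~\ref{sylowgreencorforcomplexes} and its refinement in \cite{CWZ20}), together with Mackey- and projection-formula bookkeeping. Injectivity is essentially Proposition~\ref{omnibusrestrictionprops}(b): any class in the kernel has vanishing h-marks, hence is represented by a trivial-source $V$-endotrivial complex whose Green correspondent over $H$ is $k[0]$, and the Green correspondence forces $C \simeq k[0]$ (using that $V$ is absolutely $p$-divisible, so $V$-projective modules have non-Sylow vertices and cannot interfere with the unique vertex-$S$ summand). The substance of the theorem is surjectivity.

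For surjectivity, take an indecomposable $D \in x\calE^{\Res^G_H V}_k(H)^{\calF}$ and let $C$ be its Green correspondent, so
\[
\Ind^G_H D \cong C \oplus R, \qquad \Res^G_H C \cong D \oplus D',
\]
where $R$ is $\mathfrak{X}$-projective as a $kG$-complex and $D'$ is $\{gSg^{-1} \cap H : g \in G\setminus H\}$-projective as a $kH$-complex (equivalently $\mathfrak{X}$-projective, up to $H$-conjugation). The crucial observation exploiting the hypothesis is that $R$ is then $V$-projective as a $kG$-complex and $D'$ is $\Res^G_H V$-projective as a $kH$-complex: given $Q \in \mathfrak{X}$, since $Q \leq H$, any $Q$-projective $kG$-module is a summand of $\Ind^G_H \Ind^H_Q(-)$, whose inner term is $\mathfrak{X}$-projective over $kH$ and hence $\Res^G_H V$-projective by hypothesis; $V$-projectivity over $G$ then follows from Lemma~\ref{preservedunderfunctors}(a). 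This already yields $[\Res^G_H C] = [D]$ in $x\calE^{\Res^G_H V}_k(H)^{\calF}$, provided $C$ is $V$-endotrivial.

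The heart of the proof is a two-way computation of $(\Ind^G_H D)^* \otimes_k (\Ind^G_H D)$. Decomposing with $\Ind^G_H D = C \oplus R$ gives $C^* \otimes_k C$ plus three cross terms, each $\mathfrak{X}$-projective and hence $V$-projective. On the other hand, the projection formula gives
\[
(\Ind^G_H D)^* \otimes_k (\Ind^G_H D) \cong \Ind^G_H\bigl(D \otimes_k \Res^G_H \Ind^G_H D^*\bigr).
\]
The Mackey decomposition of $\Res^G_H \Ind^G_H D^*$ isolates $D^*$ from the identity double coset, plus terms induced from $H \cap {}^xH$ for $x \notin H$, whose Sylow $p$-subgroups lie in $S \cap {}^xS \in \mathfrak{X}$ and which are therefore $\Res^G_H V$-projective. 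Combining this with $D \otimes_k D^* \simeq k[0] \oplus Q$ ($Q$ being $\Res^G_H V$-projective) and $\Ind^G_H k = k[G/H] \cong k \oplus (\mathfrak{X}\text{-projective})$ (which holds since $p \nmid [G:H]$), I obtain $(\Ind^G_H D)^* \otimes_k (\Ind^G_H D) \simeq k[0] \oplus (V\text{-projective complex})$. Matching the two decompositions in the appropriate stable homotopy category ($K^b({}_{kG}\underline\triv_V)$ for $x = \calW$, $\underline{K}^b({}_{kG}\triv)_V$ for $x = \calS$ or $e$) via Krull--Schmidt yields $C^* \otimes_k C \simeq k[0] \oplus (V\text{-projective})$, making $C$ weakly or strongly $V$-endotrivial.

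I expect the main obstacle to be the $V$-endosplit-trivial case, where the error term in $C^* \otimes_k C$ must concentrate in a single degree, not merely be a bounded $V$-projective complex. My plan is to bypass this via the local criterion of Theorem~\ref{equivdefforvendosplit}: by Corollary~\ref{fusionstableinduction}(b), the fusion-stability of $D$ forces $(\Ind^G_H D)(P)$ to have homology concentrated in exactly one degree for every $P \in s_p(G)$. The decomposition $(\Ind^G_H D)(P) = C(P) \oplus R(P)$ then forces $C(P)$ itself to have homology concentrated in at most one degree for each $P$; combined with the weak $V$-endotriviality already established, this shows $C$ is $V$-endosplit-trivial, completing the surjectivity argument.
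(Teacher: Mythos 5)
Your proof is correct in substance and reaches the right conclusion, but by a considerably longer route than the paper's. The paper applies Mackey to $\Res^G_H\Ind^G_H D$, notes that the identity double-coset term is $D$ while the other terms have vertices in $\mathfrak{X}$ (up to $H$-conjugacy) and hence are $\Res^G_H V$-projective by the hypothesis, and then concludes via Proposition \ref{omnibusrestrictionprops}(a) (restriction to $H \geq S$ detects $V$-endotriviality) that $\Ind^G_H D$ is $V$-endotrivial with $\Res^G_H[\Ind^G_H D] = [D]$. Your version isolates the Green correspondent $C \mid \Ind^G_H D$ and establishes its $V$-endotriviality by a two-sided computation of $(\Ind^G_H D)^* \otimes_k (\Ind^G_H D)$ using the projection formula and Krull--Schmidt matching in the stable homotopy category. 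That computation is valid but more work than is needed: once you know $\Res^G_H C \cong D \oplus D'$ with $D'$ $\Res^G_H V$-projective, the detection statement of Proposition \ref{omnibusrestrictionprops}(a) already gives weak $V$-endotriviality of $C$ without any projection-formula bookkeeping, and your local criterion argument then covers the endosplit case. The one place the hypothesis genuinely enters is the same in both proofs: the off-identity Mackey/Green summands are $\mathfrak{X}$-projective, hence $\Res^G_H V$-projective.

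Two small points. First, in the endosplit case, invoking the corollary to Theorem \ref{equivdefforvendosplit} requires $C(P)$ to have \emph{nonzero} homology concentrated in exactly one degree for every $P \in s_p(G)$; the decomposition $(\Ind^G_H D)(P) = C(P) \oplus R(P)$ gives you "at most one degree," while weak $V$-endotriviality supplies "nonzero" only for $P \in \calX_V$. The fix is cheap: you have already established $C^* \otimes_k C \simeq k[0] \oplus E$ with $E$ a complex of $V$-projective modules, so applying the Brauer construction gives $C(P)^* \otimes_k C(P) \simeq k[0] \oplus E(P)$, which has nonzero $H_0$, forcing $C(P)$ to be non-acyclic. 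Second, the theorem only covers $x \in \{\calW, e\}$, as the injectivity input from Proposition \ref{omnibusrestrictionprops}(b) requires; there is no "strongly $V$-endotrivial" case to handle, so the aside about $\calS$ should be dropped.
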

    \begin{proof}
        $\Res^G_H $ has been shown to be injective already, so it suffices to show it is surjective (in fact, it remains to show only for $x = w$). Let $D$ be an indecomposable, $G$-stable, $\Res^G_H V$-endotrivial $kH$-complex. We have \[\Res^G_H \Ind^G_H D \cong \bigoplus_{x \in [H\backslash G / H]} \Ind^H_{H\cap {}^xH}\Res^H_{H\cap {}^xH} D.\] From the proof of the Green correspondence (see \cite[Theorem 5.2.1]{L181}]), it follows that exactly one of these summands has vertex $S$, and the rest have vertex contained within $S \cap {}^x S$ for some $x \not\in N_G(S)$. Thus, for each complex with vertex not conjugate to $S$, each of its components is $\mathfrak{X}$-projective, hence $\Res^G_H V$-projective. Thus, $\Res^G_H \Ind^G_H [D] = [D] \in x\calE_k^{\Res^G_H V}(H)^G$, as desired.
    \end{proof}

    \begin{remark}
        One important case where the hypothesis for Theorem \ref{sufficientlylargeX} is satisfied is when $V = V(\calF_G)$ is the following module defined by Lassueur in \cite{CL13}.
        \[V(\calF_G) := \bigoplus_{P \in s_p(G), P \not\in \Syl_p(G)} k[G/P].\] A subgroup of the group $T_{V(\calF_G)}(G)$ generalizes the Dade group for arbitrary groups, and coincides with the generalization of the Dade group for fusion systems introduced by Linckelmann and Mazza in \cite{LiMa09}.
    \end{remark}

    \textbf{Acknowledgements:} The author would like to thank Nadia Mazza and Caroline Lassueur for suggesting that he think about endotrivial complexes in a relative projective setting. He would also like to thank his advisor, Robert Boltje, for many hours of mentorship, thoughtful discussion, and feedback. Finally, he would like to thank the referee for their time and many helpful suggestions to improve the paper. 

    \bibliography{bib}
    \bibliographystyle{plain}

\end{document}